\providecommand{\algorithmname}{Algorithm}
\numberwithin{equation}{section}
\numberwithin{figure}{section}
  \theoremstyle{plain}
  \newtheorem{fact}{\protect\factname}
  \theoremstyle{plain}
  \newtheorem{prop}{\protect\propositionname}
  \theoremstyle{remark}
  \newtheorem{rem}{\protect\remarkname}
\theoremstyle{plain}
\newtheorem{thm}{\protect\theoremname}
  \theoremstyle{plain}
  \newtheorem{lem}{\protect\lemmaname}
  \theoremstyle{plain}
  \newtheorem{assumption}{\protect\assumptionname}
  \providecommand{\assumptionname}{Assumption}
  \providecommand{\factname}{Fact}
  \providecommand{\lemmaname}{Lemma}
  \providecommand{\propositionname}{Proposition}
  \providecommand{\remarkname}{Remark}
\providecommand{\theoremname}{Theorem}
\begin{document}
\def\abstractname{}
\begin{frontmatter}
\title{Perfect sampling for nonhomogeneous Markov chains and hidden Markov models}
\runtitle{Perfect sampling for nonhomogeneous Markov chains}
\begin{aug}
  \author{\fnms{Nick} \snm{Whiteley} \ead[label=e1]{nick.whiteley@bristol.ac.uk}}
  \address{School of Mathematics\\ University of Bristol\\ University Walk\\ Bristol BS8 1TW\\ United Kingdom.\\ \printead{e1}}
  \and
  \author{\fnms{Anthony} \snm{Lee}\ead[label=e2]{anthony.lee@warwick.ac.uk}}
  \address{Department of Statistics\\ University of Warwick\\ Coventry CV4 7AL\\ United Kingdom.\\ \printead{e2}}
  \affiliation{University of Bristol and University of Warwick}
  \runauthor{N. Whiteley and A. Lee}

\end{aug}

\begin{keyword}[class=MSC] \kwd[Primary ]{60J10} \kwd[; Secondary ]{60G35}\end{keyword}

\begin{keyword} \kwd{coupling} \kwd{conditional ergodicity} \kwd{nonhomogeneous Markov chains} \kwd{perfect simulation}  \end{keyword}

\thankstext{t1}{The first author is supported by EPSRC Grant EP/K023330/1}
\begin{abstract}
We obtain a perfect sampling characterization of weak ergodicity for
backward products of finite stochastic matrices, and equivalently,
simultaneous tail triviality of the corresponding nonhomogeneous Markov
chains. Applying these ideas to hidden Markov models, we show how
to sample exactly from the finite-dimensional conditional distributions
of the signal process given infinitely many observations, using an
algorithm which requires only an almost surely finite number of observations
to actually be accessed. A notion of ``successful'' coupling is
introduced and its occurrence is characterized in terms of conditional
ergodicity properties of the hidden Markov model and related to the
stability of nonlinear filters.
\end{abstract}
\end{frontmatter}

\section{Introduction\label{sec:Introduction}}

With the introduction of their famous Coupling From the Past (CFTP)
algorithm, \citet{propp1996exact} showed how to use a form of backward
coupling to simulate exact samples from the invariant distribution
of an ergodic Markov chain in a.s. finite time. \citet{foss1998perfect},
in part appealing to a construction of \citet{murdoch1998exact},
showed that existence of an a.s. finite backward coupling time characterizes
uniform geometric ergodicity of the Markov chain in question. The
present papers extends these ideas in the context of nonhomogeneous
Markov chains, a setting which to date has received little attention,
perhaps due to a lack of appropriate formulation or applications.

Our contribution is to present such a formulation and apply the insight
which we develop about nonhomogeneous chains to hidden Markov models
(HMM's), for which we obtain a perfect sampling characterization of
\emph{conditional ergodicity} phenomena, i.e. ergodic properties of
the signal process in the HMM under its conditional law given the
observations, along the lines of those addressed by \citet{van2009stability}.
Even for HMM's with finite state space, conditional ergodicity and
the connection to perfect sampling can be subtle, due to the delicate
interplay between the observations and signal in the HMM, and the
fact that the ergodic theory of nonhomogeneous Markov chains, which
governs the behavior of the signal when conditioned on observations,
is considerably more complicated than that of homogeneous chains.

\subsection{Nonhomogeneous Markov chains and backward products\label{sub:Nonhomogeneous-Markov-chains}}

One of the key notions underlying CFTP is that if an ergodic Markov
chain were initialized infinitely far in the past and run forward
in time, its state at the present would be distributed exactly according
to the invariant distribution of the chain. In order to give an overview
of our main results, we need to identify a suitable and somewhat elementary
generalization of this notion.

Let $\mathbb{T}:=\mathbb{Z}^{-}\cup\{0\}$ be the set of nonpositive
integers and let $M=(M_{n})_{n\in\mathbb{T}}$ be a sequence of Markov
kernels on a finite set $E=\{1,\ldots,s\}$, so for each $x\in E$,
$M_{n}(x,\cdot)$ is probability distribution on $E$. One may construct
a non-homogeneous Markov chain $(X_{n})_{n\in\mathbb{T}}$ with paths
in $E^{\mathbb{T}}$ and transitions given by $M$ in the sense that
$X_{n}|X_{n-1}\sim M_{n}(X_{n-1},\cdot)$, as soon as there exists
a sequence $\pi=(\pi_{n})_{n\in\mathbb{T}}$ of \emph{absolute probabilities}:\emph{
}a family of probability distributions with the property that for
all $n\in\mathbb{T}$ and $x\in E$,
\[
\sum_{z\in E}\pi_{n-1}(z)M_{n}(z,x)=\pi_{n}(x).
\]
Indeed one can then readily define a consistent family of finite dimensional
distributions $(\mathbf{P}_{\pi}^{(n)})_{n\in\mathbb{T}}$,
\begin{equation}
\mathbf{P}_{\pi}^{(n)}(X_{n}=x_{n},\ldots,X_{0}=x_{0}):=\pi_{n}(x_{n})\prod_{k=n+1}^{0}M_{k}(x_{k-1},x_{k}),\label{eq:P^n_introc}
\end{equation}
giving rise via the usual Kolmogorov extension argument to a probability
measure $\mathbf{P}_{\pi}$ over paths in $E^{\mathbb{T}}$; one can
think of $(X_{n})_{n\in\mathbb{T}}$ running forward in time from
the distant past towards zero.

Now for $k\in\mathbb{T}$ define recursively
\begin{equation}
M_{k,k}:=Id,\quad\quad M_{n-1,k}(x,x^{\prime}):=\sum_{z\in E}M_{n}(x,z)M_{n,k}(z,x^{\prime}),\quad n\leq k.\label{eq:recurse M}
\end{equation}
With $k\in\mathbb{T}$ fixed, $(M_{n,k})_{n\leq k}$ are called\emph{
backward products}, since they can be written in terms of matrix multiplications
to the left: $M_{n-1,k}=M_{n}M_{n,k}$.

Questions of existence and uniqueness of $\pi$, and thus of $\mathbf{P}_{\pi}$,
are answered with the following long-established facts, which hold
for any sequence of Markov kernels $M=(M_{n})_{n\in\mathbb{T}}$ on
a finite state space $E$, see \citep[Section 4.6]{seneta2006non}
and references therein for an accessible introduction.
\begin{fact}
\label{fact:there-always-exists}there always exists at least one
sequence of absolute probabilities
\end{fact}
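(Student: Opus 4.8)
The plan is to establish the existence of at least one sequence of absolute probabilities $\pi = (\pi_n)_{n \in \mathbb{T}}$ by a compactness argument, exploiting the fact that the state space $E$ is finite. The key observation is that the consistency condition $\sum_{z} \pi_{n-1}(z) M_n(z,x) = \pi_n(x)$ is a forward-propagation relation: given any $\pi_{n-1}$, the choice $\pi_n := \pi_{n-1} M_n$ (row-vector times matrix) automatically satisfies it. So the only obstruction is to produce a doubly-infinite sequence, extending indefinitely into the past, that is mutually consistent.

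First I would fix an arbitrary ``starting'' time far in the past and construct candidate marginals at time $0$ by initializing at that time and pushing forward. Concretely, for each integer $m \geq 0$, let $\nu$ be any fixed probability distribution on $E$ (say uniform) and define a distribution at time $0$ by $\mu^{(m)} := \nu M_{-m,0}$, i.e. start the chain in state distributed as $\nu$ at time $-m$ and propagate forward to time $0$ using the backward product $M_{-m,0}$ from (\ref{eq:recurse M}). Each $\mu^{(m)}$ lives in the probability simplex $\Delta(E)$, which is compact since $E$ is finite. The heart of the argument is a diagonal/compactness extraction: I would like to find a single coherent family of marginals at all times $n \in \mathbb{T}$ that are simultaneously limits of such finite initializations.

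The cleanest route is to phrase this as finding a shift-consistent family and invoke compactness directly on the space of consistent finite-dimensional laws. For each window length $N$, the set of probability vectors $(\pi_{-N}, \ldots, \pi_0)$ satisfying the consistency relations $\pi_{n} = \pi_{n-1} M_n$ for $-N < n \leq 0$ is nonempty (pick any $\pi_{-N} \in \Delta(E)$ and propagate forward) and is a closed subset of the compact product $\Delta(E)^{N+1}$. I would then use a diagonal argument: take a sequence of window lengths $N_j \to \infty$, extract from the corresponding length-$N_j$ consistent families a subsequence whose marginal at time $0$ converges, then refine so that the marginal at time $-1$ converges, and so on, obtaining in the limit a family $(\pi_n)_{n \in \mathbb{T}}$ defined at every nonpositive integer. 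Since each consistency constraint $\pi_n = \pi_{n-1} M_n$ is a finite linear (hence continuous and closed) condition involving only two adjacent times, it is preserved under the pointwise limit, so the limiting family satisfies all the required equations simultaneously.

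The main obstacle I anticipate is purely bookkeeping rather than conceptual: one must ensure that the diagonal extraction produces a family consistent at \emph{every} pair of adjacent times, not merely at finitely many. This is handled by the standard observation that each constraint involves only adjacent coordinates, so once a subsequence has been chosen along which $\pi_{n-1}$ and $\pi_n$ both converge, the constraint relating them passes to the limit by continuity of $\mu \mapsto \mu M_n$ on the simplex; the diagonalization guarantees convergence at all coordinates simultaneously along one final subsequence. No irreducibility, aperiodicity, or ergodicity hypothesis on $M$ is needed, which is exactly why the fact holds for an \emph{arbitrary} sequence of kernels---existence is a soft consequence of finiteness of $E$ and compactness of $\Delta(E)$, and uniqueness (which genuinely requires ergodic hypotheses) is deliberately not claimed here.
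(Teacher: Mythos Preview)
Your argument is correct: compactness of the simplex $\Delta(E)$ together with a diagonal extraction yields a consistent family $(\pi_n)_{n\in\mathbb{T}}$, and the continuity of $\mu\mapsto\mu M_n$ ensures each adjacent-time constraint passes to the limit. The paper does not actually prove this fact at all---it is stated with a reference to \citet[Section 4.6]{seneta2006non}---and your compactness/diagonalization approach is precisely the classical argument one finds there.
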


\begin{fact}
\label{fact:weak_ergodicity}there exists a unique sequence of absolute
probabilities if and only if the backward products of $M$ are \emph{weakly
ergodic}, meaning
\begin{equation}
\lim_{n\rightarrow-\infty}M_{n,k}(x,z)-M_{n,k}(x^{\prime},z)=0,\quad\quad\forall\; k\in\mathbb{T},\;(z,x,x^{\prime})\in E^{3},\label{eq:weak_ergo_M}
\end{equation}
in which case,
\begin{equation}
\lim_{n\rightarrow-\infty}M_{n,k}(x,z)-\pi_{k}(z)=0,\quad\quad\forall\; k\in\mathbb{T},\;(z,x)\in E^{2},\label{eq:conv_to_pi_intro}
\end{equation}
where $\pi=(\pi_{n})_{n\in\mathbb{T}}$ is the unique sequence of
absolute probabilities for $M$.
\end{fact}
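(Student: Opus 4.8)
The plan is to treat the two implications separately, noting that the forward implication (weak ergodicity $\Rightarrow$ uniqueness) is short and simultaneously produces the limit \eqref{eq:conv_to_pi_intro}, whereas the converse carries the real content. Two preliminary observations would be recorded first: iterating the recursion \eqref{eq:recurse M} gives the semigroup identity $M_{m,k}=M_{m,n}M_{n,k}$ for $m\le n\le k$, and iterating the defining relation $\pi_{n-1}M_n=\pi_n$ shows that any absolute probability sequence satisfies $\pi_nM_{n,k}=\pi_k$ for all $n\le k$.

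For weak ergodicity $\Rightarrow$ uniqueness, suppose $\pi,\tilde\pi$ are both absolute probability sequences. Fixing $k$ and any reference state $x'$, and using $\sum_{x}[\pi_n(x)-\tilde\pi_n(x)]=0$, I would write
\[
\pi_k(z)-\tilde\pi_k(z)=\sum_{x\in E}[\pi_n(x)-\tilde\pi_n(x)]\bigl(M_{n,k}(x,z)-M_{n,k}(x',z)\bigr).
\]
Since $E$ is finite and each bracketed term vanishes as $n\to-\infty$ by \eqref{eq:weak_ergo_M}, the right-hand side tends to $0$; as the left-hand side is independent of $n$ it must equal $0$, giving $\pi=\tilde\pi$. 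The identical manipulation, now with $\sum_{x}\pi_n(x)=1$, gives
\[
\pi_k(z)-M_{n,k}(x,z)=\sum_{x''\in E}\pi_n(x'')\bigl(M_{n,k}(x'',z)-M_{n,k}(x,z)\bigr)\longrightarrow 0,
\]
which is \eqref{eq:conv_to_pi_intro}; here I would invoke Fact \ref{fact:there-always-exists} to guarantee that $\pi$ exists.

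For the converse I would argue by contraposition, constructing two distinct absolute probability sequences from a failure of \eqref{eq:weak_ergo_M}. Such a failure yields $k_0$, a coordinate $z_0$, states $x_1,x_2$, some $\epsilon>0$, and a subsequence $n_j\to-\infty$ with $|M_{n_j,k_0}(x_1,z_0)-M_{n_j,k_0}(x_2,z_0)|\ge\epsilon$. The key step is to push the point masses forward: for each $j$ set $\sigma^{(j)}_k:=M_{n_j,k}(x_1,\cdot)$ and $\tilde\sigma^{(j)}_k:=M_{n_j,k}(x_2,\cdot)$ for $k\ge n_j$, so that the semigroup identity gives the consistency relation $\sigma^{(j)}_nM_{n,k}=\sigma^{(j)}_k$ whenever $n_j\le n\le k$. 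Because every $\sigma^{(j)}_k$ lies in the compact probability simplex on the finite set $E$, a diagonal argument over $k\in\mathbb{T}$, refining the index $j$, extracts limits $\sigma_k$ and $\tilde\sigma_k$ existing for all $k$; letting $j\to\infty$ in the consistency relation shows $(\sigma_k)$ and $(\tilde\sigma_k)$ are genuine absolute probability sequences, while $|\sigma_{k_0}(z_0)-\tilde\sigma_{k_0}(z_0)|\ge\epsilon$ shows they differ. Hence uniqueness fails.

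I expect the converse to be the main obstacle. The delicate point is that an absolute probability sequence must be defined on all of $\mathbb{T}$ and be consistent for every pair $n\le k$, whereas the point-mass sequences are only defined from time $n_j$ onward; the content therefore lies precisely in the diagonal extraction over $k$, available thanks to the finiteness of $E$, and in verifying that the separation witnessed at $(k_0,z_0)$ persists in the limit. The forward direction, by contrast, is a single telescoping estimate driven by \eqref{eq:weak_ergo_M}.
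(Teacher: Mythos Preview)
Your argument is correct. Note, however, that the paper does not actually prove Fact~\ref{fact:weak_ergodicity}: it is recorded as a long-established result and attributed to \citet[Theorem~4.20]{seneta2006non}; the same equivalence reappears as $(1)\Leftrightarrow(2)$ in Proposition~\ref{prop:weak_ergo_charac}, again with a citation in lieu of proof. Your approach is essentially the classical one found in that reference: the forward implication is the standard telescoping estimate you wrote down, and your contrapositive for the converse---pushing forward point masses $\delta_{x_1},\delta_{x_2}$ through $M_{n_j,\cdot}$ and extracting limits by diagonal compactness on the finite-dimensional simplex---is precisely the construction used to manufacture two distinct absolute probability sequences from a failure of weak ergodicity. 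The only point worth making explicit is that the diagonal subsequence must be taken simultaneously for both families $(\sigma^{(j)})$ and $(\tilde\sigma^{(j)})$, so that the separation inequality at $(k_0,z_0)$ is inherited along a common index set; you allude to this but it would be worth stating outright.
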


\subsection{Perfect sampling and characterizations of weak ergodicity\label{sub:Perfect-sampling-and}}

Our basic algorithmic goal, when weak ergodicity holds, is to obtain
exact draws from each $\pi_{n}$. This can be achieved with a very
modest generalization of Propp and Wilson's method. The only existing
works on perfect simulation for nonhomogeneous chains which we know
of are \citep{glynn2001two} and \citep{stenflo2007perfect}, which
respectively provide perfect sampling methods for Markov chains conditioned
to avoid certain states, and products of transition matrices subject
to a particular uniform regularity assumption, which we discuss in
more detail later. Our first goal is to develop more general insight
into how the feasibility of CFTP for nonhomogeneous chains is related
to various ergodic properties of $M$ and $\mathbf{P}_{\pi}$.

Inspired by \citet{foss1998perfect}'s characterization of uniform
geometric ergodicity for a homogeneous chain in terms of the existence
of a successful (meaning a.s. finite) backward coupling time, we assert
that ``success'' in the nonhomogeneous case is for not just one,
but all of a particular countably infinite family of coupling times
to be a.s. finite. Our first main result, Theorem~\ref{thm:perfect sampling-1},
shows that success so-defined of our coupling is equivalent to weak
ergodicity, as in (\ref{eq:weak_ergo_M}), which is weaker than the
assumption of \citet{stenflo2007perfect}, and if successful our coupling
delivers a sample from each member of the then unique sequence of
absolute probabilities $(\pi_{n})_{n\in\mathbb{T}}$ in a.s. finite
time.

We extend this ergodic characterization in Theorem~\ref{thm:tail},
by showing that unicity of a sequence of absolute probabilities, hence
weak ergodicity, hence success of our coupling, is also equivalent
to the \emph{simultaneous tail triviality} condition:
\begin{align}
 & \mathbf{P}_{\pi}(A)=\mathbf{P}_{\pi}(A)^{2}=\mathbf{P}_{\tilde{\pi}}(A),\quad\quad\label{eq:tail_triv_front}\\
 & \quad\quad\quad\quad\quad\quad\quad\quad\quad\forall(\pi,\tilde{\pi},A)\in\Pi_{M}\times\Pi_{M}\times\bigcap_{n\in\mathbb{T}}\sigma(X_{k};k\leq n),\nonumber
\end{align}
where $\Pi_{M}$ is the set of all sequences of absolute probabilities
for $M$.

\subsection{Hidden Markov models and conditional ergodicity}

Our motivation for considering nonhomogeneous chains and the condition
(\ref{eq:tail_triv_front}) stems from hidden Markov models, which
are widely applied across econometrics, genomics, signal processing
and many other disciplines as they provide flexible and interpretable
means to model dependence between observed data in terms of an unobserved
Markov chain. We take a slightly non-standard perspective in that
an HMM is for us a process $(X_{n},Y_{n})_{n\in\mathbb{T}}$ on a
nonpositive time horizon, where the \emph{signal} $X=(X_{n})_{n\in\mathbb{T}}$
is a possibly nonhomogeneous Markov chain with paths in $E^{\mathbb{T}}$,
and the \emph{observations} $Y=(Y_{n})_{n\in\mathbb{T}}$ are conditionally
independent given $X$, with each $Y_{n}$ valued in a Polish space
$F$ and having a conditional distribution given $X$ which depends
only on $X_{n}$.

With the law of $(X_{n},Y_{n})_{n\in\mathbb{T}}$ then written as
$\mathbf{P}$, a standard task in applications of HMM's is to calculate
conditional distributions of the form $\mathbf{P}(X_{n}\in\cdot\,|\sigma(Y_{k};k\in I))$
where $I$ is some finite subset of $\mathbb{T}$. These distributions
are immediately useful for inference about the signal process and
for making predictions about future observations given those recorded
up to the present. For example, if one extends the HMM onto a positive
time horizon by introducing $(X_{1},Y_{1})$ such that $\mathbf{P}(X_{1}=x,Y_{1}\in\cdot\,|\mathcal{F}^{X}\vee\mathcal{F}^{Y})=M(X_{0},x)G(x,\cdot)$,
where $\mathcal{F}^{X}=\sigma(X_{k};k\in\mathbb{T})$, $\mathcal{F}^{Y}=\sigma(Y_{k};k\in\mathbb{T})$,
and $M$ and $G$ are probability kernels respectively from $E$ to
itself and from $E$ to $F$, then for any $I\subset\mathbb{T}$,
\begin{eqnarray}
 &  & \mathbf{P}(Y_{1}\in\cdot\,|\sigma(Y_{k};k\in I))\label{eq:predict}\\
 &  & \quad\quad\quad\quad\quad\quad=\sum_{x,x^{\prime}}\mathbf{P}(X_{0}=x|\sigma(Y_{k};k\in I))M(x,x^{\prime})G(x^{\prime},\cdot).\nonumber
\end{eqnarray}
When calculating such distributions in order to make predictions,
it is desirable to impart as much information from the past as possible.
For example, the mean-square optimal $\mathcal{F}^{Y}$-measurable
predictor of $Y_{1}$ is, of course, the conditional expectation $\mathbf{E}[Y_{1}|\mathcal{F}^{Y}]$.
However exact calculation of $\mathbf{P}(X_{0}\in\cdot\,|\mathcal{F}^{Y})$,
or indeed $\mathbf{P}(X_{n}\in\cdot\,|\mathcal{F}^{Y})$ for any $n\in\mathbb{T}$,
requires an infinite number of observations to be recorded and in
general cannot be accomplished in finite time.

Nevertheless, under certain conditions, our perfect sampling method
makes it possible to obtain an exact draw from $\mathbf{P}(X_{n}\in\cdot\,|\mathcal{F}^{Y})$
using an algorithm which runs for $|T_{n}|$ time-steps and uses only
$(Y_{k};k=T_{n},T_{n}+1,\ldots,0)$, where $T_{n}$ is a $\mathbb{T}$-valued
random time, the details of which we make precise later. If with $n=0$,
the resulting sample from $\mathbf{P}(X_{0}\in\cdot\,|\mathcal{F}^{Y})$
is denoted $X_{0}^{\star}$ and one also samples $Y_{1}^{\star}\sim\sum_{x}M(X_{0}^{\star},x)G(x,\cdot)$,
then by (\ref{eq:predict}) with $I=\mathbb{T}$, $Y_{1}^{\star}$
is distributed exactly according to the ``ideal'' predictive conditional
distribution $\mathbf{P}(Y_{1}\in\cdot\,|\mathcal{F}^{Y})$.

The connection to Sections~\ref{sub:Nonhomogeneous-Markov-chains}--\ref{sub:Perfect-sampling-and}
arises from the facts that:
\begin{eqnarray*}
 &  & \mathbf{P}(X_{n}\in\cdot\,|\mathcal{F}^{Y}\vee\sigma(X_{k};k<n)),\\
 &  & =\mathbf{P}(X_{n}\in\cdot\,|\mathcal{F}^{Y}\vee\sigma(X_{n-1})),\\
 &  & =\mathbf{P}(X_{n}\in\cdot\,|\sigma(Y_{k};k\geq n)\vee\sigma(X_{n-1})),\quad\mathbf{P}-\text{a.s.},
\end{eqnarray*}
i.e., conditional on the observations, the signal process $X$ is
a nonhomogeneous Markov chain, and its conditional transition probabilities
at time $n$ depend on $Y$ only through $(Y_{k})_{k\geq n}$. Moreover,
for any $y\in F^{\mathbb{T}}$ we can calculate and sample from each
of a family of Markov kernels $M^{y}=(M_{n}^{y})_{n\in\mathbb{T}}$,
with $M_{n}^{Y}(X_{n-1},\cdot)$ a version of $\mathbf{P}(X_{n}\in\cdot\,|\sigma(Y_{k};k\geq n)\vee\sigma(X_{n-1}))$,
for which conditional probabilities of the form $\mathbf{P}(X_{n}\in\cdot\,|\mathcal{F}^{Y})$
define a sequence of absolute probabilities, and $T_{n}$ as mentioned
above is one of a collection of coupling times arising from CFTP applied
to $M^{y}$.

Building from the considerations of Section~\ref{sub:Perfect-sampling-and},
our attention then turns to the question of how success of our HMM
sampling scheme, meaning that every $T_{n}$ is conditionally a.s.-finite
given $Y$, is related to the ergodic properties of the HMM. Compared
to the setup of Sections~\ref{sub:Nonhomogeneous-Markov-chains}--\ref{sub:Perfect-sampling-and},
we have to handle the additional complication here that each $\mathbf{P}(X_{n}\in\cdot\,|\mathcal{F}^{Y})$,
$M_{n}^{Y}$ and success itself depend $Y$. Our main result in this
regard, Theorem~\ref{thm:cond_ergo_perf_sampling}, establishes that
success for $\mathbf{P}$-almost all $Y$ is equivalent to the following
condition, which can be considered the HMM-counterpart of (\ref{eq:tail_triv_front}):
there exists an event $H\in\mathcal{F}^{X}\otimes\mathcal{F}^{Y}$
with $\mathbf{P}(H)=1$ such that for all $\omega=(x,y)\in H$,
\begin{equation}
\mathbf{P}^{\mathcal{F}^{Y}}(\omega,A)=\mathbf{P}^{\mathcal{F}^{Y}}(\omega,A)^{2},\quad\forall A\in\bigcap_{n\in\mathbb{T}}\sigma(X_{k};k\leq n)\label{eq:cond_triviality_front}
\end{equation}
and
\begin{eqnarray}
 &  & \mathbf{P}_{\pi^{Y(\omega)}}(A)=\mathbf{P}^{\mathcal{F}^{Y}}(\omega,A),\label{eq:equal_tail_front}\\
 &  & \quad\quad\quad\quad\quad\quad\quad\quad\quad\quad\forall(\pi^{Y(\omega)},A)\in\Pi_{M^{Y(\omega)}}\times\bigcap_{n\in\mathbb{T}}\sigma(X_{k};k\leq n).\nonumber
\end{eqnarray}
Here, with $\Omega:=E^{\mathbb{T}}\times F^{\mathbb{T}}$, $\mathbf{P}^{\mathcal{F}^{Y}}:\Omega\times\mathcal{F}^{X}\rightarrow[0,1]$
is a probability kernel such that for all $A\in\mathcal{F}^{X}$,
$\mathbf{P}^{\mathcal{F}^{Y}}(\omega,A)=\mathbf{P}(A|\mathcal{F}^{Y})(\omega)$,
$\mathbf{P}-\text{a.s.}$; $\Pi_{M^{Y(\omega)}}$ is the set of all
sequences of absolute probabilities for $M^{Y(\omega)}$; and $\mathbf{P}_{\pi^{Y(\omega)}}(\cdot)$
is the measure on $\mathcal{F}^{X}$ under which $(X_{n})_{n\in\mathbb{T}}$
is a Markov chain with transitions $M^{Y(\omega)}$ and absolute probabilities
$\pi^{Y(\omega)}$.

The condition (\ref{eq:cond_triviality_front}) can be interpreted
as meaning that the signal process is\emph{ conditionally ergodic}
given the observations, and is a key condition in studies of stability
with respect to initial conditions of nonlinear filters, see \citep{van2009stability}
and references therein. The condition (\ref{eq:equal_tail_front})
can be understood as meaning that any probability measure which makes
the signal process a Markov chain with transitions $M^{Y(\omega)}$
must have the same tail behavior as $\mathbf{P}(\cdot|\mathcal{F}^{Y})(\omega)$.

The remainder of the paper is structured as follows. Some notation
and other preliminaries are given in Section \ref{sec:Backward-Products-and}.
Section \ref{sec:Perfect-sampling} reviews existing literature on
perfect sampling for nonhomogeneous chains, gives the details of the
coupling method and describes its connection to weak ergodicity. Section
\ref{sec:Tail-Triviality-and} addresses tail-triviality. Section
\ref{sec:HMM} addresses the HMM setup. In Section \ref{sec:Discussion}
we discuss examples of HMM's for which our sampling method is not
successful, either through failure of (\ref{eq:cond_triviality_front})
or (\ref{eq:equal_tail_front}). We provide verifiable sufficient
conditions for successful coupling, discuss sampling when only finitely
many observations are available, and numerically illustrate how the
coupling can be influenced by the observation sequences. We discuss
an approach to the simulation of multiple dependent samples using
a single run of the perfect simulation and numerically investigate
its computational efficiency.

\section{Preliminaries\label{sec:Backward-Products-and}}

Throughout the paper, $E=\{1,\dots,s\}$ is a finite set, which we
endow with the discrete topology, and the corresponding Borel $\sigma$-algebra,
i.e.\ the power set of $E$, is denoted by $\mathcal{B}(E)$. For
any two probability distributions $\mu,\nu$ on $E$ we write the
total variation distance as
\begin{eqnarray*}
\left\Vert \mu-\nu\right\Vert  & := & \sup_{A\subset E}|\mu(A)-\nu(A)|\\
 & = & \frac{1}{2}\sum_{x\in E}\left|\mu(x)-\nu(x)\right|,
\end{eqnarray*}
and for a Markov kernel $K$ on $E$ we write Dobrushin's coefficient
\begin{eqnarray}
\beta(K) & := & \max_{(x,x^{\prime})\in E^{2}}\left\Vert K(x,\cdot)-K(x^{\prime},\cdot)\right\Vert ,\nonumber \\
 & = & 1-\min_{(x,x^{\prime})\in E^{2}}\sum_{z\in E}\min\{K(x,z),K(x^{\prime},z)\}.\label{eq:dobrushin_alt}
\end{eqnarray}

Throughout Sections~\ref{sec:Backward-Products-and}--\ref{sec:Tail-Triviality-and},
we fix an arbitrary collection of Markov kernels $M=(M_{n})_{n\in\mathbb{T}}$
on $E$ and we add slightly to the definitions of (\ref{eq:recurse M})
the convention that $M_{n,k}=Id$ whenever $k\leq n$, $n\in\mathbb{T}$.

We shall make extensive use of the following proposition, which expands
on Fact~\ref{fact:weak_ergodicity}, providing characterizations
of weak ergodicity in terms of Dobrushin's coefficient.
\begin{prop}
\noindent \label{prop:weak_ergo_charac}The following are equivalent.\vspace{0.15cm}

\noindent 1. for all $k\in\mathbb{T}$ and $(x,x^{\prime},z)\in E^{3}$,
$\lim_{n\rightarrow-\infty}M_{n,k}(x,z)-M_{n,k}(x^{\prime},z)=0$,

\noindent 2. $\text{card}(\Pi_{M})=1$,

\noindent 3. for all $k\in\mathbb{T}$, $\lim_{n\rightarrow-\infty}\beta(M_{n,k})=0$,

\noindent 4. there exists a strictly decreasing subsequence $\left(n_{i}\right)_{i\in\mathbb{N}}$
of $\mathbb{T}$ such that
\[
\sum_{i=0}^{\infty}1-\beta(M_{n_{i+1},n_{i}})=\infty,
\]
 \vspace{0.15cm}

\noindent and when any (and then all) of conditions 1--4 hold,
\begin{equation}
\lim_{n\rightarrow-\infty}M_{n,k}(x,z)-\pi_{k}(z)=0,\quad\quad\forall\; k\in\mathbb{T},\;(z,x)\in E^{2},\label{eq:M_conv_to_pi}
\end{equation}

\noindent where $\pi=(\pi_{n})_{n\in\mathbb{T}}$ is the unique sequence
of absolute probabilities for $M$.
\end{prop}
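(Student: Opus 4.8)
The plan is to treat Fact~\ref{fact:weak_ergodicity} as the bridge between the coordinatewise weak-ergodicity statement (condition~1, i.e.\ (\ref{eq:weak_ergo_M})) and unicity of absolute probabilities (condition~2): it already supplies the equivalence $1\Leftrightarrow 2$ together with the limit~(\ref{eq:M_conv_to_pi}). What remains is to splice the Dobrushin-coefficient conditions~3 and~4 into the chain, which I would do by establishing $1\Leftrightarrow 3$ and $3\Leftrightarrow 4$; the convergence conclusion is then inherited directly from Fact~\ref{fact:weak_ergodicity}.

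For $1\Leftrightarrow 3$ I would exploit finiteness of $E$. Since the total variation distance dominates the difference of masses on any singleton, $|M_{n,k}(x,z)-M_{n,k}(x',z)|\le\beta(M_{n,k})$, giving $3\Rightarrow 1$ at once. Conversely, writing $\Vert M_{n,k}(x,\cdot)-M_{n,k}(x',\cdot)\Vert=\tfrac12\sum_{z}|M_{n,k}(x,z)-M_{n,k}(x',z)|$ as a finite sum and then maximizing over the finitely many pairs $(x,x')$, condition~1 forces each term, and hence $\beta(M_{n,k})$, to vanish, yielding $1\Rightarrow 3$.

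The substance is $3\Leftrightarrow 4$, and here the tools are the semigroup identity $M_{n,k}=M_{n,m}M_{m,k}$ (obtained by iterating~(\ref{eq:recurse M})) together with submultiplicativity $\beta(M_{n,m}M_{m,k})\le\beta(M_{n,m})\beta(M_{m,k})$, and the elementary fact that for $b_i\in[0,1]$ one has $\prod_i b_i=0$ whenever $\sum_i(1-b_i)=\infty$. For $3\Rightarrow 4$ I would build the subsequence greedily: using that $\beta(M_{n,k})\to 0$ holds for \emph{every} fixed terminal index $k$, set $n_0=0$ and, having chosen $n_i$, pick $n_{i+1}<n_i$ with $\beta(M_{n_{i+1},n_i})\le\tfrac12$; then $1-\beta(M_{n_{i+1},n_i})\ge\tfrac12$ for every $i$, so the series diverges.

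For $4\Rightarrow 3$, which I expect to be the main obstacle, the difficulty is that condition~4 supplies summability only along a single fixed subsequence and a priori pins down convergence only for one terminal index, whereas condition~3 demands $\beta(M_{n,k})\to 0$ along the full limit $n\to-\infty$ and for every $k\in\mathbb{T}$. I would resolve this in three moves. First, submultiplicativity along the blocks of the subsequence gives $\beta(M_{n_j,n_{i^*}})\le\prod_{i\ge i^*}\beta(M_{n_{i+1},n_i})$, and since deleting finitely many terms preserves divergence of $\sum_i(1-\beta(M_{n_{i+1},n_i}))$, this product vanishes, so $\beta(M_{n_j,n_{i^*}})\to 0$ along the subsequence. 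Second, the monotonicity $\beta(M_{n,k})\le\beta(M_{n',k})$ for $n\le n'$ (again from submultiplicativity, discarding the leading factor $\beta(M_{n,n'})\le 1$) upgrades this subsequential limit to the genuine limit $\beta(M_{n,n_{i^*}})\to 0$ as $n\to-\infty$. Third, given an arbitrary $k\in\mathbb{T}$, I would choose a subsequence point $n_{i^*}\le k$ and write $M_{n,k}=M_{n,n_{i^*}}M_{n_{i^*},k}$, so that $\beta(M_{n,k})\le\beta(M_{n,n_{i^*}})\to 0$; this transfers the conclusion to every terminal index and closes the cycle.
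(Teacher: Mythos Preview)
Your argument is correct. The paper does not actually prove this proposition: it cites \citet[Theorem~4.20]{seneta2006non} for $(1)\Leftrightarrow(2)$ and the limit~(\ref{eq:M_conv_to_pi}), says $(1)\Leftrightarrow(3)$ is immediate from the definition of $\beta(\cdot)$, and cites \citet[Theorem~4.18]{seneta2006non} for $(1)\Leftrightarrow(4)$. Your proposal is a self-contained version of exactly these steps---you invoke Fact~\ref{fact:weak_ergodicity} (itself cited from Seneta) for $1\Leftrightarrow 2$ and the convergence statement, give the trivial finite-$E$ argument for $1\Leftrightarrow 3$, and supply the standard submultiplicativity-plus-product/sum argument for $3\Leftrightarrow 4$, which is essentially what one finds in Seneta.
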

For proof of (1)$\Leftrightarrow$(2) and (\ref{eq:M_conv_to_pi})
see \citet[Theorem 4.20]{seneta2006non}, (1)$\Leftrightarrow$(3)
is immediate from the definition of $\beta(\cdot)$ and for proof
of (1)$\Leftrightarrow$(4) see \citet[Theorem 4.18]{seneta2006non}.

To connect with the perhaps more familiar case of homogeneous chains,
consider the case $E=\{0,1\}$ and $M_{n}(x,1-x)=M(x,1-x)=1$. For
all $\alpha\in(0,1)$, $\mbox{\ensuremath{\pi}}_{2n}(0)=\alpha=1-\mbox{\ensuremath{\pi}}_{2n}(1)$,
$\mbox{\ensuremath{\pi}}_{2n-1}(0)=1-\alpha=1-\mbox{\ensuremath{\pi}}_{2n-1}(1)$,
$n\in\mathbb{T}$ is a sequence of absolute probabilities, so that
there are infinitely many sequences of absolute probabilities for
$M$, even though there is a unique stationary distribution.

\section{Perfect sampling for nonhomogeneous chains\label{sec:Perfect-sampling}}

\subsection{Background}

We now review the existing literature on perfect sampling for nonhomogeneous
Markov chains. \citet[Section 5]{glynn2001two} formulated a perfect
sampling algorithm for a finite state-space chain conditioned to remain
in some set over a given time window and termination of their algorithm
in a.s. finite time follows from assumptions they make about the conditioned
process. \citet{stenflo2007perfect} devised a perfect sampling procedure
for nonhomogeneous backward products of stochastic matrices and showed
that, when there exists a constant $c>0$ such that
\begin{equation}
\inf_{n\in\mathbb{T}}\sum_{x^{\prime}\in E}\min_{x\in E}M_{n}(x,x^{\prime})\geq c,\label{eq:stenflo}
\end{equation}
the limit $\lim_{n\rightarrow-\infty}M_{n,0}(x,\cdot)$ exists, is
independent of $x$, and defines a probability distribution on $E$,
from which the algorithm of \citet{stenflo2007perfect} produces a
sample.

When (\ref{eq:stenflo}) holds, straightforward calculations show
that $c\leq1$ and by (\ref{eq:dobrushin_alt}), $\sup_{n\in\mathbb{T}}\beta(M_{n})\leq1-c$,
so part 4. of Proposition~\ref{prop:weak_ergo_charac} holds and
$\lim_{n\rightarrow-\infty}M_{n,0}(x,\cdot)$ is of course a member
of the unique sequence of absolute probabilities. However, part 4.\ of
Proposition~\ref{prop:weak_ergo_charac} is clearly a weaker condition
than (\ref{eq:stenflo}).

\subsection{The coupling}

Consider $\Omega^{\xi}:=(E^{s})^{\mathbb{T}}$ with product $\sigma$-algebra
$\mathcal{F}^{\xi}:=(\mathcal{B}(E)^{\otimes s})^{\otimes\mathbb{T}}$.
Define the coordinate process $\xi=(\xi_{n}^{x};x\in E,n\in\mathbb{T})$,
$\xi_{n}^{x}:\Omega^{\xi}\rightarrow E$, and let $\mathbf{Q}$ be
the probability measure on $(\Omega^{\xi},\mathcal{F}^{\xi})$,
\[
\mathbf{Q}(d\xi):=\bigotimes_{n\in\mathbb{T}}\bigotimes_{x\in E}M_{n}(x,\xi_{n}^{x})d\xi_{n}^{x},
\]
where $d\xi_{n}^{x}$ is counting measure on $E$, so that
\begin{eqnarray}
 &  & (\xi_{n}^{x};x\in E,n\in\mathbb{T})\text{ are independent under }\mathbf{Q},\label{eq:P_alg_1}\\
 &  & \mathbf{Q}(\xi_{n}^{x}=x^{\prime})=M_{n}(x,x^{\prime}),\quad(n,x,x^{\prime})\in\mathbb{T}\times E^{2}.\label{eq:P_alg_2}
\end{eqnarray}

For each $n\in\mathbb{T}$, define the random map
\begin{equation}
\Phi_{n}:x\in E\longmapsto\Phi_{n}(x):=\xi_{n}^{x}\in E,\label{eq:random_map_defn}
\end{equation}
and the compositions
\begin{equation}
\Phi_{n,k}:=\Phi_{k}\circ\Phi_{k-1}\circ\cdots\circ\Phi_{n+1},\quad n<k\in\mathbb{T},\label{eq:random_map_comp}
\end{equation}
so for example, $\Phi_{n,n+2}(x)=\Phi_{n+2}(\Phi_{n+1}(x))=\Phi_{n+2}(\xi_{n+1}^{x})=\xi_{n+2}^{\xi_{n+1}^{x}}$,
etc., and it is easily checked that
\begin{equation}
\mathbf{Q}(\Phi_{n}(x)=x^{\prime})=M_{n}(x,x^{\prime})\quad\text{ and }\quad\mathbf{Q}(\Phi_{n,k}(x)=x^{\prime})=M_{n,k}(x,x^{\prime}).\label{eq:Phi_equals_M}
\end{equation}
Now define the $\{-\infty\}\cup\mathbb{T}$-valued coalescence times
\begin{equation}
T_{k}:=\sup\{n<k\;:\;\text{image of }\Phi_{n,k}\text{ is a singleton}\},\quad k\in\mathbb{T},\label{eq:coalescence_times_defn}
\end{equation}
with $T_{k}:=-\infty$ when the set $\{n<k\;:\;\text{image of }\Phi_{n,k}\text{ is a singleton}\}$
is empty.
\begin{rem}
Note that in the time-homogeneous case, $M_{n}=M_{0}$ for all $n\in\mathbb{T}$,
the coalescence times $T_{n}$ are identically distributed and the
random maps $\Phi_{n}$ are iid.
\end{rem}
The main result of this section is the following theorem.
\begin{thm}
\label{thm:perfect sampling-1}Any (and then all) of Proposition~\ref{prop:weak_ergo_charac}
conditions 1--4 hold if and only if the coupling is successful, meaning
that for all $n\in\mathbb{T}$,
\begin{equation}
\mathbf{Q}(T_{n}>-\infty)=1,\quad\forall n\in\mathbb{T}.\label{eq:P_T_all_finite}
\end{equation}
Furthermore, if (\ref{eq:P_T_all_finite}) holds then, $\mathbf{Q}(\Phi_{T_{n},n}(x)\in\cdot)=\pi_{n}(\cdot)$
for all $x\in E$ and $n\in\mathbb{T}$, where $(\pi_{n})_{n\in\mathbb{T}}$
is the unique sequence of absolute probabilities for $M$. \end{thm}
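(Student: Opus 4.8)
The plan is to establish the equivalence between the coupling being successful and weak ergodicity (Proposition~\ref{prop:weak_ergo_charac}, condition~3) by relating the coalescence event to Dobrushin's coefficient, and then to identify the common value of the coalesced map with $\pi_n$ using the convergence statement~(\ref{eq:M_conv_to_pi}).

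For the forward direction, I would argue that success of the coupling forces $\lim_{n\to-\infty}\beta(M_{n,k})=0$. The key observation is that by (\ref{eq:Phi_equals_M}) the law of $\Phi_{n,k}$ as a random map encodes the backward products, and the image of $\Phi_{n,k}$ being a singleton is exactly the event that the coupled chains started from all initial states in $E$ have coalesced by time $k$. Since $E$ is finite, the event $\{\text{image of }\Phi_{n,k}\text{ is a singleton}\}$ is equivalent to $\Phi_{n,k}(x)=\Phi_{n,k}(x')$ for all pairs $(x,x')$. When $T_k>-\infty$ with probability one, the probability that two trajectories have not yet coalesced by time $k$ tends to zero as $n\to-\infty$; I would bound the per-pair disagreement probability $\mathbf{Q}(\Phi_{n,k}(x)\neq\Phi_{n,k}(x'))$ below by the separation $\|M_{n,k}(x,\cdot)-M_{n,k}(x',\cdot)\|$ implicit in the coupling construction, and conclude $\beta(M_{n,k})\to 0$, which is condition~3.

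For the reverse direction, suppose condition~3 holds, so $\beta(M_{n,k})\to 0$ for each fixed $k$. The natural tool is the submultiplicativity of Dobrushin's coefficient, $\beta(M_{n,k})\leq\prod \beta(\cdot)$ over the intervening blocks, together with condition~4 guaranteeing that the relevant products vanish. I would then construct an explicit coupling bound: the probability that $\Phi_{n,k}$ is \emph{not} a singleton is controlled by a product of disagreement probabilities, and the Borel--Cantelli-type summability furnished by condition~4 ensures that, along the decreasing subsequence $(n_i)$, coalescence occurs with probability one. Concretely, the events that coalescence occurs on successive blocks $(n_{i+1},n_i]$ are independent under $\mathbf{Q}$ by (\ref{eq:P_alg_1}), each has probability at least $1-\beta(M_{n_{i+1},n_i})$, and since $\sum_i (1-\beta(M_{n_{i+1},n_i}))=\infty$, the second Borel--Cantelli lemma gives infinitely many successful blocks, hence $T_k>-\infty$ almost surely. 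The main obstacle here is bookkeeping the coupling so that block-wise coalescence is both independent and correctly lower-bounded by $1-\beta$; I expect the cleanest route is to note that if any single block collapses the whole image to a singleton then so do all earlier composed maps, so one only needs \emph{one} successful block.

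Finally, to identify the limiting sample, I would fix $n$ and note that on the event $\{T_n>-\infty\}$ the map $\Phi_{T_n,n}$ is constant, with common value independent of the starting state. For $m<T_n$, the law of $\Phi_{m,n}(x)$ equals $M_{m,n}(x,\cdot)$ by (\ref{eq:Phi_equals_M}), and this coincides with the law of $\Phi_{T_n,n}(x)$ on the coalescence event since the map has stabilized. Letting $m\to-\infty$ and invoking the convergence (\ref{eq:M_conv_to_pi}) from Proposition~\ref{prop:weak_ergo_charac}, we obtain $M_{m,n}(x,\cdot)\to\pi_n(\cdot)$; combined with the stabilization, this yields $\mathbf{Q}(\Phi_{T_n,n}(x)\in\cdot)=\pi_n(\cdot)$ for every $x\in E$. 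A small amount of care is needed to pass the limit through the almost-sure coalescence event, but this is routine given that $E$ is finite and the coalesced value is eventually constant in $m$.
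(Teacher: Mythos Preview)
Your argument for ``successful coupling $\Rightarrow$ weak ergodicity'' and for the identification of the limit are correct and essentially match the paper; in fact your direct route to condition~3 via the coupling inequality
\[
\beta(M_{n,k})\leq \max_{x,x'}\mathbf{Q}\bigl(\Phi_{n,k}(x)\neq\Phi_{n,k}(x')\bigr)\leq \mathbf{Q}(T_k<n)\longrightarrow 0
\]
is slightly cleaner than the paper's detour through condition~4.

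The genuine gap is in the direction ``weak ergodicity $\Rightarrow$ successful coupling''. Your assertion that a block $(n_{i+1},n_i]$ coalesces with probability at least $1-\beta(M_{n_{i+1},n_i})$ is \emph{false} for the independent coupling specified in (\ref{eq:P_alg_1})--(\ref{eq:P_alg_2}). A two-state counterexample: if $M_{n,k}(0,\cdot)=M_{n,k}(1,\cdot)=(\tfrac12,\tfrac12)$ then $\beta(M_{n,k})=0$, yet under the independent coupling $\Phi_{n,k}(0)$ and $\Phi_{n,k}(1)$ are independent uniform on $\{0,1\}$ and coincide with probability $\tfrac12$, not $1$. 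The bound $1-\beta$ is what a \emph{maximal} coupling of two marginals would achieve; here all $s$ trajectories must meet simultaneously and their one-step transitions are drawn independently, so the block coalescence probability can be much smaller than $1-\beta$. This is not bookkeeping; it is the substantive difficulty in this direction.

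The paper handles it with a separate lemma (Lemma~\ref{lem:as_good_as_independent}): if some state $x^\star$ satisfies $\min_x M_{n,k}(x,x^\star)\geq\epsilon$, then $\mathbf{Q}(T_k\geq n)\geq\epsilon^s$. The proof is an induction on the number $j$ of trajectories, showing that conditioning on the first $j-1$ trajectories all landing at $x^\star$ can only \emph{help} the $j$th one do so, because the conditioning merely excludes certain intermediate states and thereby shrinks the probability of avoiding $x^\star$. This exploits the independent-coupling structure in a non-obvious way. One then uses (\ref{eq:M_conv_to_pi}) to find, for each block, a state $x^\star$ with $\epsilon$ close to $1/s$, after which your block-independence and Borel--Cantelli reasoning goes through with the lower bound $\epsilon^s$ in place of $1-\beta$.
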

\begin{rem}
As in the case of CFTP for time-homogeneous chains, if instead of
(\ref{eq:P_alg_1}) one allows dependence between $(\xi_{n}^{x};x\in E)$,
then it is possible to construct $M$ and $\mbox{\ensuremath{\mathbf{Q}}}$
such that the backward products of $M$ are weakly ergodic, but $\mathbf{Q}(T_{n}=-\infty)=1$
for all $n$, see e.g. \citep[Ch. 10]{haggstrom2002finite}. On the
other hand, under (\ref{eq:P_alg_1}), the situation is more clear-cut,
in the sense that the ``if and only if'' part of Theorem~\ref{thm:perfect sampling-1}
holds. However it should be noted that couplings involving dependence
between $(\xi_{n}^{x};x\in E)$ may lead to more computationally efficient
algorithms in some situations, especially when the number of states
$s$ is large.
\end{rem}
The proof of Theorem~\ref{thm:perfect sampling-1} is composed of
Propositions~\ref{prop:coupling_prop-->} and~\ref{prop:coupling_prop-<-},
which follow Lemma~\ref{lem:as_good_as_independent}.
\begin{lem}
\label{lem:as_good_as_independent}If for some $x^{\star}\in E$,
$\min_{x\in E}M_{n,k}(x,x^{*})\geq\epsilon>0$, then $\mathbf{Q}(T_{k}\geq n)\geq\epsilon^{s}$.\end{lem}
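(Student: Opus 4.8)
The plan is to reduce the statement to a positive-association lower bound on a \emph{coalescence to a fixed state} event, and then to establish that bound by induction on the number of composed maps $k-n$. First I would observe that if the image of $\Phi_{n,k}$ equals the singleton $\{x^{\star}\}$, then in particular the image is a singleton, so $n$ belongs to the set appearing in (\ref{eq:coalescence_times_defn}) and hence $T_{k}\geq n$. Consequently
\[
\mathbf{Q}(T_{k}\geq n)\ \geq\ \mathbf{Q}\bigl(\Phi_{n,k}(x)=x^{\star}\ \text{for all }x\in E\bigr),
\]
and it suffices to show that the right-hand side is at least $\prod_{x\in E}M_{n,k}(x,x^{\star})$, because the hypothesis $\min_{x}M_{n,k}(x,x^{\star})\geq\epsilon$ then yields $\epsilon^{s}$, there being $s=|E|$ factors each bounded below by $\epsilon$.

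The crux is therefore the inequality $\mathbf{Q}(\Phi_{n,k}(x)=x^{\star}\ \forall x\in E)\geq\prod_{x\in E}M_{n,k}(x,x^{\star})$, which I would prove in the slightly stronger, subset-indexed form $\mathbf{Q}(\Phi_{n,k}(x)=x^{\star}\ \forall x\in S)\geq\prod_{x\in S}M_{n,k}(x,x^{\star})$ for every $S\subseteq E$, by induction on $k-n$. The base case $k=n+1$ is immediate: there $\Phi_{n,n+1}=\Phi_{n+1}$, and since the $(\xi_{n+1}^{x})_{x\in E}$ are independent under $\mathbf{Q}$ with $\mathbf{Q}(\xi_{n+1}^{x}=x^{\star})=M_{n+1}(x,x^{\star})$ by (\ref{eq:P_alg_1})--(\ref{eq:P_alg_2}), the event factorizes exactly and equality holds.

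For the inductive step I would peel off the first map, writing $\Phi_{n,k}=\Phi_{n+1,k}\circ\Phi_{n+1}$, so that $\Phi_{n,k}(x)=\Phi_{n+1,k}(\xi_{n+1}^{x})$. Conditioning on $(\xi_{n+1}^{x})_{x\in S}$ and using that $\Phi_{n+1,k}$ depends only on $(\xi_{m}^{\cdot})_{m\geq n+2}$ and is hence independent of $\Phi_{n+1}$, the event $\{\Phi_{n,k}(x)=x^{\star}\ \forall x\in S\}$ becomes $\{\Phi_{n+1,k}(w)=x^{\star}\ \forall w\in V\}$, where $V:=\{\xi_{n+1}^{x}:x\in S\}$ is the random image set. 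Applying the inductive hypothesis to $V$ and writing $f(w):=M_{n+1,k}(w,x^{\star})$ gives
\[
\mathbf{Q}\bigl(\Phi_{n,k}(x)=x^{\star}\ \forall x\in S\bigr)\ \geq\ \mathbf{E}_{\mathbf{Q}}\Bigl[\textstyle\prod_{w\in V}f(w)\Bigr].
\]
Since each $f(w)=M_{n+1,k}(w,x^{\star})\in[0,1]$, discarding the repeated occurrences of image values can only raise the product, i.e. $\prod_{w\in V}f(w)\geq\prod_{x\in S}f(\xi_{n+1}^{x})$; taking expectations, then using independence of $(\xi_{n+1}^{x})_{x\in S}$ together with the recursion $M_{n,k}(x,x^{\star})=\sum_{w}M_{n+1}(x,w)f(w)=\mathbf{E}_{\mathbf{Q}}[f(\xi_{n+1}^{x})]$, yields $\mathbf{E}_{\mathbf{Q}}[\prod_{x\in S}f(\xi_{n+1}^{x})]=\prod_{x\in S}M_{n,k}(x,x^{\star})$, closing the induction.

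The main obstacle is precisely this positive-correlation phenomenon: the events $\{\Phi_{n,k}(x)=x^{\star}\}$ are genuinely dependent across $x$, since they share the randomness $\xi$, so one cannot merely multiply marginals. The point to get right is that the dependence is \emph{favorable} --- once two trajectories coincide at an intermediate time they remain coupled thereafter --- and the clean way to capture this is the elementary inequality $\prod_{w\in V}f(w)\geq\prod_{x\in S}f(\xi_{n+1}^{x})$ for $[0,1]$-valued $f$, applied after conditioning on the first step. I would be careful to state the inductive hypothesis for all subsets $S$ rather than only $S=E$, because the image set $V$ produced after one step is in general a proper subset of $E$.
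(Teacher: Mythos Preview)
Your proof is correct and in fact yields the slightly sharper bound $\mathbf{Q}(T_{k}\geq n)\geq\prod_{x\in E}M_{n,k}(x,x^{\star})$. The route, however, differs from the paper's. You induct on the length $k-n$ of the time window, strengthening the claim to arbitrary subsets $S\subseteq E$ so that after peeling off the first step the inductive hypothesis applies to the random image set $V$; the positive-dependence is then captured by the elementary observation that, for $f$ valued in $[0,1]$, collapsing repeated values in a product can only increase it. The paper instead fixes the window $[n,k]$ and inducts on the number $j$ of starting points: it shows $\mathbf{Q}(\Phi_{n,k}(j)\neq x^{\star}\mid A_{n,k}(j-1))\leq 1-\epsilon$ by conditioning on the full paths of the first $j-1$ trajectories and observing that, given they all reach $x^{\star}$, the $j$th trajectory fails to reach $x^{\star}$ only if it avoids every intermediate state visited by those paths \emph{and} misses $x^{\star}$ at time $k$, an event dominated by the unconditional miss probability. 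Your argument is shorter and avoids the somewhat heavy path-avoidance machinery $Q_{n,k}(x,S_{n+1},\ldots,S_{k})$; the paper's argument, on the other hand, makes the coalescence mechanism (once two trajectories meet they stay together) more visibly the source of the inequality.
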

\begin{proof}
We have, with $A_{n,k}(j):=\{\Phi_{n,k}(1)=x^{*},\ldots,\Phi_{n,k}(j)=x^{*}\}$,
\[
\mathbf{Q}(T_{k}\geq n)\geq\mathbf{Q}(\Phi_{n,k}(1)=x^{*},\ldots,\Phi_{n,k}(s)=x^{*})=\mathbf{Q}(A_{n,k}(s)).
\]
We shall prove by an inductive argument that $\mathbf{Q}(A_{n,k}(s))\geq\epsilon^{s}$,
the inductive hypothesis being that, with $j\in\{2,\ldots,s\}$,
\begin{equation}
\mathbf{Q}(A_{n,k}(j-1))\geq\epsilon^{j-1},\label{eq:inductive_hypothesis}
\end{equation}
which is validated in the case $j=2$ by the assumption of the Lemma.
Since $\mathbf{Q}(A_{n,k}(j)\mid A_{n,k}(j-1))=\mathbf{Q}(\Phi_{n,k}(j)=x^{*}\mid A_{n,k}(j-1))$,
to show that (\ref{eq:inductive_hypothesis}) holds with $j-1$ replaced
by $j$, it is enough to establish
\begin{equation}
\mathbf{Q}(\Phi_{n,k}(j)\neq x^{*}\mid A_{n,k}(j-1))\leq1-\epsilon.\label{eq:lemma_1_to_prove}
\end{equation}
To this end, we need more some notation. Define $Q_{n,k}:E\times\left(2^{E}\right)^{k-n}\rightarrow[0,1]$
as
\begin{eqnarray*}
 &  & Q_{n,k}(x,S_{n+1},\ldots,S_{k})\\
 &  & :=\mathbf{Q}(\Phi_{n+1}(x)\notin S_{n+1},\Phi_{n,n+2}(x)\notin S_{n+2},\ldots,\Phi_{n,k}(x)\notin S_{k})\\
 &  & =\sum_{(x_{n+1},\ldots,x_{k})\in S_{n+1}^{\complement}\times\cdots\times S_{k}^{\complement}}M_{n+1}(x,x_{n+1})\prod_{i=n+2}^{k}M_{i}(x_{i-1},x_{i}),
\end{eqnarray*}
where the final equality is easily deduced from (\ref{eq:P_alg_2}).
Thus $Q_{n,k}(x,S_{n+1},\ldots,S_{k})$ is the probability that a
Markov chain evolving according to $M$ from time $n$ to $k$ starting
at $x$ avoids, for each $i\in\{n+1,\ldots,k\}$, the set $S_{i}$
at time $i$. It follows from the non-negativity of each $M_{i}$
that for any sequence of subsets $S_{n+1},\ldots,S_{k-1}$ of $E$
and any points $x,x^{\prime}\in E$,
\begin{equation}
Q_{n,k}(x,S_{n+1},\ldots,S_{k-1},\{x^{\prime}\})\leq Q_{n,k}(x,\emptyset,\ldots,\emptyset,\{x^{\prime}\})=\mathbf{Q}(\Phi_{n,k}(x)\neq x^{\prime}).\label{eq:Q_nk_upper_bound}
\end{equation}
Now with $j\in\{2,\ldots,s\}$ as in (\ref{eq:lemma_1_to_prove}),
introduce the notation
\begin{eqnarray*}
 &  & {\bf x}_{n,k,j}:=(x_{n+1,1},\ldots,x_{n+1,j-1},\ldots,x_{k-1,1},\ldots,x_{k-1,j-1}),
\end{eqnarray*}
which is a point in $E^{(k-n-1)(j-1)}=:E_{n,k,j}$, and
\[
B({\bf x}_{n,k,j}):=\left\{ \Phi_{n+1}(1)=x_{n+1,1},\ldots,\Phi_{n,k-1}(j-1)=x_{k-1,j-1}\right\} .
\]
Using (\ref{eq:P_alg_1}), (\ref{eq:P_alg_2}) and the fact that for
any $x,x^{\prime}\in E$ and $m\in\{n+1,\ldots,k\}$, $\{\Phi_{n,m}(x)=\Phi_{n,m}(x^{\prime})\}\subset\{\Phi_{n,k}(x)=\Phi_{n,k}(x^{\prime})\}$,
it follows by some elementary but tedious manipulations that
\begin{eqnarray*}
 &  & \mathbf{Q}\left(\Phi_{n,k}(j)\neq x^{*}|B({\bf x}_{n,k,j})\cap A_{n,k}(j-1)\right)\\
 &  & =Q_{n,k}\left(j,\bigcup_{i=1}^{j-1}\{x_{n+1,i}\},\ldots,\bigcup_{i=1}^{j-1}\{x_{k-1,i}\},\{x^{*}\}\right),
\end{eqnarray*}
so,
\begin{eqnarray}
 &  & \mathbf{Q}(\Phi_{n,k}(j)\neq x^{*}\mid A_{n,k}(j-1))\nonumber \\
 &  & =\sum_{{\bf x}_{n,k,j}\in E_{n,k,j}}Q_{n,k}\left(j,\bigcup_{i=1}^{j-1}\{x_{n+1,i}\},\ldots,\bigcup_{i=1}^{j-1}\{x_{k-1,i}\},\{x^{*}\}\right)\nonumber \\
 &  & \quad\quad\quad\quad\quad\quad\quad\times\mathbf{Q}\left(B({\bf x}_{n,k,j})\mid A_{n,k}(j-1)\right)\nonumber \\
 &  & \le Q_{n,k}\left(j,\emptyset,\ldots,\emptyset,\{x^{*}\}\right)=\mathbf{Q}(\Phi_{n,k}(j)\neq x^{*})\leq1-\epsilon,\label{eq:P_Q_P_line}
\end{eqnarray}
where the penultimate inequality and final equality are from (\ref{eq:Q_nk_upper_bound}),
and the final inequality follows from the hypothesis of the Lemma.
Thus the inductive hypothesis (\ref{eq:inductive_hypothesis}) holds
with $j-1$ replaced with $j$, and the proof of the Lemma is complete.\end{proof}
\begin{prop}
\label{prop:coupling_prop-->}If any of Proposition~\ref{prop:weak_ergo_charac}'s
conditions 1--4 hold, then for all $n\in\mathbb{T}$, $\mathbf{Q}(T_{n}>-\infty)=1$.\end{prop}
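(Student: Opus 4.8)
The plan is to use the convergence (\ref{eq:M_conv_to_pi}), which holds once any of conditions 1--4 of Proposition~\ref{prop:weak_ergo_charac} is in force, to manufacture for each fixed $n$ a sequence of disjoint time-blocks on each of which the whole state space has a uniformly positive chance of collapsing to a point, and then to exploit independence across blocks. I would first record the structural identity $\Phi_{a,n}=\Phi_{b,n}\circ\Phi_{a,b}$ for $a<b\leq n$, so that the image of $\Phi_{a,n}$ is the $\Phi_{b,n}$-image of the image of $\Phi_{a,b}$; hence the composite has singleton image whenever \emph{either} factor does. Two consequences follow: coalescence is monotone in the start time, so that $\{T_n\geq a\}$ coincides with $\{\Phi_{a,n}\text{ has singleton image}\}$; and a block coalescence propagates forward, i.e.\ a singleton image for $\Phi_{a,a_i}$ forces one for $\Phi_{a,n}$ whenever $a<a_i\leq n$.

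Next I construct the blocks. Fix $n$ and set $a_0:=n$. Given $a_i$, since $\pi_{a_i}$ is a probability distribution on the $s$-point set $E$ there is a state $x_i^\star$ with $\pi_{a_i}(x_i^\star)\geq 1/s$; by the convergence (\ref{eq:M_conv_to_pi}) and finiteness of $E$ (which makes the convergence uniform in the starting state) I can choose $a_{i+1}<a_i$ so negative that $\min_{x\in E}M_{a_{i+1},a_i}(x,x_i^\star)\geq 1/(2s)=:\epsilon$. This realizes exactly the hypothesis of Lemma~\ref{lem:as_good_as_independent} on the block indexed by $(a_{i+1},a_i]$, yielding $\mathbf{Q}(\Phi_{a_{i+1},a_i}\text{ has singleton image})\geq\epsilon^{s}$, a bound independent of $i$.

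I would then conclude via independence. Define $C_i:=\{\Phi_{a_{i+1},a_i}\text{ has singleton image}\}$. Since $\Phi_{a_{i+1},a_i}$ is a function only of $(\xi_m^x;\,a_{i+1}<m\leq a_i,\,x\in E)$ and successive blocks use disjoint index ranges, the events $(C_i)_i$ are independent under $\mathbf{Q}$ by (\ref{eq:P_alg_1}). As $\mathbf{Q}(C_i)\geq\epsilon^{s}>0$ for every $i$, $\mathbf{Q}\bigl(\bigcap_i C_i^{\complement}\bigr)=\prod_i\bigl(1-\mathbf{Q}(C_i)\bigr)=0$, so $\mathbf{Q}$-almost surely at least one $C_i$ occurs; by the forward-propagation fact this forces $\Phi_{a_{i+1},n}$ to have singleton image, i.e.\ $T_n\geq a_{i+1}>-\infty$. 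Hence $\mathbf{Q}(T_n>-\infty)=1$ for every $n\in\mathbb{T}$.

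The main obstacle I anticipate is the mismatch in hypotheses: parts 1--4 of Proposition~\ref{prop:weak_ergo_charac} are phrased through Dobrushin's coefficient, which only guarantees a uniformly positive overlap of mass between rows, possibly spread across several states, whereas Lemma~\ref{lem:as_good_as_independent} demands a \emph{single} dominating state $x^\star$. The resolution is to lean on the stronger conclusion (\ref{eq:M_conv_to_pi}) rather than on $\beta$ directly: passing to the limit $\pi_{a_i}$ and selecting its modal state furnishes precisely the single dominating column needed, with finiteness of $E$ supplying the uniformity. The remaining care is purely in the bookkeeping that shows the block maps depend on disjoint coordinates, which is what legitimizes the independence and hence the product (Borel--Cantelli-type) argument.
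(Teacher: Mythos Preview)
Your proposal is correct and follows essentially the same approach as the paper's proof: both use (\ref{eq:M_conv_to_pi}) together with finiteness of $E$ to pick a modal state of $\pi_{a_i}$ and then choose block endpoints so that Lemma~\ref{lem:as_good_as_independent} applies on each block with a uniform $\epsilon$, after which independence of the blocks finishes the argument. The only cosmetic difference is that the paper builds a single universal sequence $(k_i)$ starting from $k_0=0$ and handles arbitrary $n$ by locating it inside this sequence via the submultiplicative bound $\mathbf{Q}(T_n<k')\leq\mathbf{Q}(T_n<k)\mathbf{Q}(T_k<k')$, whereas you rebuild the blocks from $a_0=n$ for each fixed $n$ and invoke a Borel--Cantelli product directly; these are equivalent packagings of the same idea.
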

\begin{proof}
Under the hypothesis of the proposition, there is only one member
of $\Pi_{M}$, denote it by $\pi=(\pi_{n})_{n\in\mathbb{T}}$. Since
the $(\pi_{n})_{n\in\mathbb{T}}$ are probability distributions, for
each $n$ there must exist some $x_{n}^{\star}\in E$ such that $\pi_{n}(x_{n}^{\star})\geq s^{-1}$
(recall $E=\{1,\ldots,s\}$). Now fix $\epsilon\in(0,s^{-1})$. By
(\ref{eq:M_conv_to_pi}), for each $k\in\mathbb{T}$ there exists
$n<k$ such that
\[
M_{n,k}(x,x_{k}^{\star})\geq\pi_{k}(x_{k}^{\star})-(s^{-1}-\epsilon)\geq\epsilon>0,\quad\forall x\in E.
\]
We may then define $(k_{i})_{i\in\mathbb{N}}$ a strictly decreasing
subsequence of $\mathbb{T}$, with $k_{0}:=0$ and
\[
k_{i+1}:=\sup\left\{ n<k_{i}:M_{n,k_{i}}(x,x_{k_{i}}^{\star})\geq\epsilon\;\forall x\in E\right\} ,
\]
so that by construction,
\[
\inf_{i\in\mathbb{N}}\min_{x\in E}M_{k_{i+1},k_{i}}(x,x_{k_{i}}^{\star})\geq\epsilon>0.
\]
Lemma~\ref{lem:as_good_as_independent} then gives
\begin{equation}
\sup_{i\in\mathbb{N}}\mathbf{Q}(T_{k_{i}}\leq k_{i+1})\leq1-\epsilon^{s}.\label{eq:P_T_upper_bound}
\end{equation}
We now wish to apply this bound to control the tails of the coalescence
times $T_{n}$. To this end, first note that for any $n,k,k^{\prime}\in\mathbb{T}$,
$k^{\prime}<k<n$,
\[
\{T_{n}\geq k\}\cup\{T_{k}\geq k^{\prime}\}\subseteq\{T_{n}\geq k^{\prime}\}
\]
and since the events $\left\{ T_{n}\geq k\right\} $ and $\left\{ T_{k}\geq k^{\prime}\right\} $
are independent, we have
\begin{equation}
\mathbf{Q}(T_{n}<k^{\prime})\leq\mathbf{Q}(T_{n}<k)\mathbf{Q}(T_{k}<k^{\prime}),\label{eq:submultiplicative-1-1}
\end{equation}
Now fix $n\in\mathbb{T}$. Since $(k_{i})_{i\in\mathbb{N}}$ is strictly
decreasing, there exists some $i(n)$ such that $k_{i(n)}<n$. Then
by repeated application of (\ref{eq:submultiplicative-1-1}), we find
that for any $\ell>i(n)$,
\begin{equation}
\mathbf{Q}(T_{n}<k_{\ell})\leq\mathbf{Q}(T_{n}<k_{i(n)})\prod_{j=i(n)}^{\ell-1}\mathbf{Q}(T_{k_{j}}<k_{j+1}).\label{eq:P_iterated_bound-1}
\end{equation}
Now (\ref{eq:P_T_upper_bound}) provides an upper bound for the $j$-indexed
terms in (\ref{eq:P_iterated_bound-1}), and then taking $\ell\rightarrow\infty$
we find $\mathbf{Q}(T_{n}>-\infty)=1$, which completes the proof
of the proposition. \end{proof}
\begin{rem}
If one has available quantitative convergence information in addition
to (\ref{eq:M_conv_to_pi}), then the inequalities (\ref{eq:submultiplicative-1-1}),
(\ref{eq:P_iterated_bound-1}) and Lemma~\ref{lem:as_good_as_independent}
could be used to bound the moments of the $T_{n}$.\end{rem}
\begin{prop}
\noindent \label{prop:coupling_prop-<-}If for all $n\in\mathbb{T}$,
$\mathbf{Q}(T_{n}>-\infty)=1$, then both of the following hold.\vspace{0.15cm}

\noindent 1. there exists a strictly decreasing subsequence $(n_{i})_{i\in\mathbb{N}}$
of $\mathbb{T}$ such that $\sum_{i=0}^{\infty}1-\beta(M_{n_{i+1},n_{i}})=\infty$,
i.e.,\ condition 4.\ of Proposition~\ref{prop:weak_ergo_charac}
holds.

\noindent 2. for all $n\in\mathbb{T}$, $\mathbf{Q}(X_{T_{n}}^{x}(n)\in\cdot)=\pi_{n}(\cdot)$
for all $x\in E$, where $(\pi_{n})_{n\in\mathbb{T}}$ is the unique
sequence of absolute probabilities for $M$. \vspace{0.15cm}
\end{prop}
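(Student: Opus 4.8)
The plan is to prove the two parts separately, using the coupling inequality to tie Dobrushin's coefficient to the coalescence times in part~1, and a limiting decomposition of the backward products in part~2. Throughout I interpret $X^x_{T_n}(n)$ as $\Phi_{T_n,n}(x)$, consistent with Theorem~\ref{thm:perfect sampling-1}.

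For part~1, I would first record the elementary estimate
\[
\beta(M_{n,k})\;\le\;\max_{(x,x')\in E^{2}}\mathbf{Q}\big(\Phi_{n,k}(x)\neq\Phi_{n,k}(x')\big)\;\le\;\mathbf{Q}(T_{k}<n),
\]
the first inequality being the coupling inequality applied to $\left\Vert M_{n,k}(x,\cdot)-M_{n,k}(x',\cdot)\right\Vert$ via (\ref{eq:Phi_equals_M}), and the second holding because on $\{T_{k}\ge n\}$ the image of $\Phi_{n,k}$ is a singleton, so $\Phi_{n,k}(x)=\Phi_{n,k}(x')$ for every pair. Equivalently $1-\beta(M_{n,k})\ge\mathbf{Q}(T_{k}\ge n)$. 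Since the events $\{T_{k}\ge n\}$ increase to $\{T_{k}>-\infty\}$ as $n\to-\infty$, the hypothesis $\mathbf{Q}(T_{k}>-\infty)=1$ gives $\mathbf{Q}(T_{k}\ge n)\uparrow1$ by continuity from below. I would then build the subsequence greedily: set $n_{0}:=0$ and, given $n_{i}$, choose $n_{i+1}<n_{i}$ with $\mathbf{Q}(T_{n_{i}}\ge n_{i+1})\ge\tfrac12$, which is always possible by the preceding limit. This yields $1-\beta(M_{n_{i+1},n_{i}})\ge\tfrac12$ for every $i$, so $\sum_{i}\big(1-\beta(M_{n_{i+1},n_{i}})\big)=\infty$, which is exactly condition~4 of Proposition~\ref{prop:weak_ergo_charac}; in particular all its conditions, including (\ref{eq:M_conv_to_pi}), now hold.

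For part~2 I would first note the monotonicity that makes the coalescence value well defined: since $\Phi_{m',n}=\Phi_{m,n}\circ\Phi_{m',m}$ for $m'<m$, a singleton image at time $m$ forces a singleton image, with the same value, at every earlier time $m'$. Hence on $\{T_{n}>-\infty\}$ the supremum in (\ref{eq:coalescence_times_defn}) is attained, $\Phi_{m,n}(x)$ is independent of $x$ for all $m\le T_{n}$, and all these values coincide with a single random variable $V_{n}:=\Phi_{T_{n},n}(x)$. For fixed $x\in E$ and $m<n$ I would then split, using (\ref{eq:Phi_equals_M}),
\[
M_{m,n}(x,z)=\mathbf{Q}\big(\Phi_{m,n}(x)=z\big)=\mathbf{Q}\big(V_{n}=z,\,T_{n}\ge m\big)+\mathbf{Q}\big(\Phi_{m,n}(x)=z,\,T_{n}<m\big),
\]
where on $\{T_{n}\ge m\}$ I have replaced $\Phi_{m,n}(x)$ by $V_{n}$. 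Letting $m\to-\infty$: the left-hand side converges to $\pi_{n}(z)$ by part~1 together with (\ref{eq:M_conv_to_pi}); the first term converges to $\mathbf{Q}(V_{n}=z)$ by continuity from below, using $\mathbf{Q}(T_{n}>-\infty)=1$; and the second term is bounded by $\mathbf{Q}(T_{n}<m)\to0$. Identifying the limits gives $\mathbf{Q}(\Phi_{T_{n},n}(x)=z)=\mathbf{Q}(V_{n}=z)=\pi_{n}(z)$ for every $x$ and $z$, which is the claim.

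I expect the main obstacle to be the bookkeeping in part~2: making precise that the coalescence value $V_{n}$ is a single well-defined random variable shared across all times $m\le T_{n}$ and all starting states (the monotonicity step), and justifying the three simultaneous limits as $m\to-\infty$ --- in particular that the left-hand convergence to $\pi_{n}$ is available only because part~1 has already unlocked the full strength of Proposition~\ref{prop:weak_ergo_charac}. The coupling inequality and the subsequence construction in part~1 are routine once the bound $1-\beta(M_{n,k})\ge\mathbf{Q}(T_{k}\ge n)$ is in place.
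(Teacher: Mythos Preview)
Your proof is correct and follows essentially the same route as the paper: for part~1 you use the coupling inequality $\beta(M_{n,k})\le\mathbf{Q}(T_k<n)$ and build a subsequence on which this is uniformly small (the paper uses a generic $\delta$ where you use $\tfrac12$, which is immaterial), and for part~2 you identify the limit of $M_{m,n}(x,z)$ with the law of $\Phi_{T_n,n}(x)$, which the paper phrases as a.s.\ convergence $\Phi_{m,n}(x)\to\Phi_{T_n,n}(x)$ plus convergence in distribution, while you write out the same thing as an explicit two-term decomposition.
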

\begin{proof}
Fix some $\delta>0$. Under the hypothesis, we have that for each
$n$ there exists $k\in\mathbb{T}$ such that $\mathbf{Q}(T_{n}<k)<\delta$.
We may therefore define $(k_{i})_{i\in\mathbb{N}}$ a strictly decreasing
subsequence of $\mathbb{T}$ with $k_{0}:=0$,

\[
k_{i+1}:=\sup\{n<k_{i}:\mathbf{Q}(T_{k_{i}}<n)<\delta\},
\]
so that by construction,
\begin{equation}
\sup_{i\in\mathbb{N}}\mathbf{Q}(T_{k_{i}}<k_{i+1})<\delta.\label{eq:P_sup_uniform-1}
\end{equation}
Now for any $x,x^{\prime}\in E$,  $\{T_{k_{i}}\geq k_{i+1}\}\subseteq\{\Phi_{k_{i+1},k_{i}}(x)=\Phi_{k_{i+1},k_{i}}(x^{\prime})\}$
so $\mathbf{Q}(\Phi_{k_{i+1},k_{i}}(x)\neq\Phi_{k_{i+1},k_{i}}(x^{\prime}))\leq\mathbf{Q}(T_{k_{i}}<k_{i+1})$.
Combining this observation with (\ref{eq:P_sup_uniform-1}) and (\ref{eq:Phi_equals_M}),
we obtain:
\begin{eqnarray}
\beta(M_{k_{i+1},k_{i}}) & = & \max_{x,x^{\prime}}\|M_{k_{i+1},k_{i}}(x,\cdot)-M_{k_{i+1},k_{i}}(x^{\prime},\cdot)\|\nonumber \\
 & = & \max_{x,x^{\prime}}\sup_{A\subset E}|\mathbf{Q}(\Phi_{k_{i+1},k_{i}}(x)\in A)-\mathbf{Q}(\Phi_{k_{i+1},k_{i}}(x^{\prime})\in A)|\nonumber \\
 & \leq & \max_{x,x^{\prime}}\mathbf{Q}(\Phi_{k_{i+1},k_{i}}(x)\neq\Phi_{k_{i+1},k_{i}}(x^{\prime}))<\delta,\quad\forall i\in\mathbb{N},\label{eq:beta(M)}
\end{eqnarray}
where we have used the fact that for any two $E$-valued random variables
$X,X^{\prime}$ defined on a common probability space, $\sup_{A\subset E}|\mathbf{P}(X\in A)-\mathbf{P}(X^{\prime}\in A)|\leq\mathbf{P}(X\neq X^{\prime})$
\citep[p. 12]{lindvall2002lectures}. From (\ref{eq:beta(M)}) we
immediately have $\sum_{i}1-\beta(M_{k_{i+1},k_{i}})=\infty$, which
completes the proof of part (1).

For part (2), fix any $n\in\mathbb{T}$, and note that on the event
$\{T_{n}>-\infty\}$, $\Phi_{T_{n},n}(x)$ is well defined as a random
variable. When $\mathbf{Q}(T_{n}>-\infty)=1$, we have by construction
of the algorithm that $\lim_{k\rightarrow-\infty}\Phi_{n+k,n}(x)=\Phi_{T_{n},n}(x)$,
$\mathbf{Q}$-a.s. Using (\ref{eq:M_conv_to_pi}) we also have for
any $z\in E$, $\mathbf{Q}(\Phi_{n+k,n}(x)=z)=M_{n+k,n}(x,z)\rightarrow\pi_{n}(z)$
as $k\rightarrow-\infty$, hence  $\mathbf{Q}(\Phi_{T_{n},n}(x)=z)=\pi_{n}(z)$.
The proof is complete.
\end{proof}

\section{Tail triviality and unicity of absolute probabilities\label{sec:Tail-Triviality-and}}

Let $\Omega^{X}=E^{\mathbb{T}}$, let $\mathcal{F}^{X}=\mathcal{B}(E)^{\otimes\mathbb{T}}$
be the product $\sigma$-algebra. Let $X=(X_{n})_{n\in\mathbb{T}}$
be the coordinate process on $\Omega^{X}$ and for $I\subset\mathbb{T}$,
define $\mathcal{F}_{I}^{X}=\sigma(X_{n};n\in I)$. As in Section
\ref{sec:Introduction}, for any $\pi\in\Pi_{M}$ we let $\mathbf{P}_{\pi}$
be the probability measure on $(\Omega^{X},\mathcal{F}^{X})$ constructed
from the finite dimensional distributions $(\mathbf{P}_{\pi}^{(n)})_{n\in\mathbb{T}}$
given by
\begin{equation}
\mathbf{P}_{\pi}^{(n)}(X_{n}=x_{n},\ldots,X_{0}=x_{0}):=\pi_{n}(x_{n})\prod_{k=n+1}^{0}M_{k}(x_{k-1},x_{k}).\label{eq:P_pi_fidi}
\end{equation}
Expectation w.r.t. $\mathbf{P}_{\pi}$ is denoted by $\mathbf{E}_{\pi}$.
The main result of this section is the following theorem, which via
Proposition~\ref{prop:weak_ergo_charac} gives an alternative characterization
of the success of the coupling in the sense of Theorem~\ref{thm:perfect sampling-1}.
\begin{thm}
\noindent \label{thm:tail}The following are equivalent.\vspace{0.15cm}

\noindent 1. $\text{card}(\Pi_{M})=1$.

\noindent 2. $\mathbf{P}_{\pi}(A)=\mathbf{P}_{\pi}(A)^{2}=\mathbf{P}_{\tilde{\pi}}(A),\quad\forall(\pi,\tilde{\pi},A)\in\Pi_{M}\times\Pi_{M}\times\bigcap_{n\in\mathbb{T}}\mathcal{F}_{]-\infty,n]}^{X}$.
\vspace{0.15cm}

\end{thm}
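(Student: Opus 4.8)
The plan is to prove the two implications separately, reading $(1)\Rightarrow(2)$ as an asymptotic-independence (mixing) statement and $(2)\Rightarrow(1)$ by exhibiting an explicit tail-measurable random variable on which two distinct members of $\Pi_M$ are forced to disagree. Throughout I write $\mathcal{T}:=\bigcap_{n\in\mathbb{T}}\mathcal{F}_{]-\infty,n]}^{X}$ for the tail $\sigma$-algebra, and I use that under any $\mathbf{P}_{\pi}$ the coordinate process is a nonhomogeneous Markov chain, so that past and future are conditionally independent given the present.

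For $(1)\Rightarrow(2)$, observe first that when $\mathrm{card}(\Pi_{M})=1$ the equality $\mathbf{P}_{\pi}(A)=\mathbf{P}_{\tilde{\pi}}(A)$ is vacuous, so the content is tail triviality: $\mathbf{P}_{\pi}(A)\in\{0,1\}$ for every $A\in\mathcal{T}$. By Proposition~\ref{prop:weak_ergo_charac}, condition~1 delivers the convergence (\ref{eq:M_conv_to_pi}). I would fix $A\in\mathcal{T}$ and a cylinder event $B\in\mathcal{F}_{[m,0]}^{X}$, and for each $m'\le m$ condition on $X_{m'}$: since $A\in\mathcal{F}_{]-\infty,m']}^{X}$ and $B\in\mathcal{F}_{[m',0]}^{X}$, the Markov property gives
\[
\mathbf{P}_{\pi}(A\cap B)=\sum_{x\in E}\pi_{m'}(x)\,\mathbf{P}_{\pi}(A\mid X_{m'}=x)\,\mathbf{P}_{\pi}(B\mid X_{m'}=x).
\]
Writing $\mathbf{P}_{\pi}(B\mid X_{m'}=x)=\sum_{z}M_{m',m}(x,z)\mathbf{P}_{\pi}(B\mid X_{m}=z)$ and invoking (\ref{eq:M_conv_to_pi}), the finiteness of $E$ yields $\mathbf{P}_{\pi}(B\mid X_{m'}=x)\to\mathbf{P}_{\pi}(B)$ uniformly in $x$ as $m'\to-\infty$; since the left-hand side is independent of $m'$ and $\sum_{x}\pi_{m'}(x)\mathbf{P}_{\pi}(A\mid X_{m'}=x)=\mathbf{P}_{\pi}(A)$, letting $m'\to-\infty$ gives $\mathbf{P}_{\pi}(A\cap B)=\mathbf{P}_{\pi}(A)\mathbf{P}_{\pi}(B)$. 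This holds on the field $\bigcup_{m}\mathcal{F}_{[m,0]}^{X}$, a $\pi$-system generating $\mathcal{F}^{X}$, so a Dynkin-class argument extends it to all $B\in\mathcal{F}^{X}$; taking $B=A$ gives $\mathbf{P}_{\pi}(A)=\mathbf{P}_{\pi}(A)^{2}$.

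For $(2)\Rightarrow(1)$, I would argue by contradiction: suppose condition~2 holds but $\pi,\tilde{\pi}\in\Pi_{M}$ differ, say $\pi_{n_{0}}(x_{0})\neq\tilde{\pi}_{n_{0}}(x_{0})$. The key object is
\[
L:=\limsup_{m\to-\infty}M_{m,n_{0}}(X_{m},x_{0}),
\]
which is bounded and, depending on $(X_{m})_{m\le n}$ only for every $n\in\mathbb{T}$, is $\mathcal{T}$-measurable. For $m\le n_{0}$ the Markov property identifies $M_{m,n_{0}}(X_{m},x_{0})=\mathbf{E}_{\pi}[\mathbf{1}\{X_{n_{0}}=x_{0}\}\mid\mathcal{F}_{]-\infty,m]}^{X}]$, a reverse martingale; L\'{e}vy's downward convergence theorem then shows the $\limsup$ is an honest a.s.\ limit under $\mathbf{P}_{\pi}$ with $\mathbf{E}_{\pi}[L]=\pi_{n_{0}}(x_{0})$, and likewise $\mathbf{E}_{\tilde{\pi}}[L]=\tilde{\pi}_{n_{0}}(x_{0})$. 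But $L$ is a bounded $\mathcal{T}$-measurable function, and the agreement part of condition~2 forces $\mathbf{P}_{\pi}$ and $\mathbf{P}_{\tilde{\pi}}$ to assign it equal expectations, whence $\pi_{n_{0}}(x_{0})=\tilde{\pi}_{n_{0}}(x_{0})$, a contradiction; thus $\mathrm{card}(\Pi_{M})=1$.

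The main obstacle is the $(2)\Rightarrow(1)$ direction: the decisive move is to recognize that the backward transition probabilities $M_{m,n_{0}}(X_{m},x_{0})$, evaluated along the chain, form a reverse martingale whose tail limit encodes the marginal $\pi_{n_{0}}(x_{0})$, so that tail agreement between $\mathbf{P}_{\pi}$ and $\mathbf{P}_{\tilde{\pi}}$ collapses onto equality of one-dimensional marginals and hence onto $\pi=\tilde{\pi}$. By comparison, $(1)\Rightarrow(2)$ is routine once (\ref{eq:M_conv_to_pi}) is available; the only points needing a little care are the uniformity in $x$ (free here since $E$ is finite) and the passage from the generating field to $\mathcal{F}^{X}$.
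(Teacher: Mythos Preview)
Your proof is correct. For $(2)\Rightarrow(1)$ you use exactly the paper's idea: the reverse martingale $M_{m,n_0}(X_m,x_0)=\mathbf{E}_{\pi}[\mathbf{1}\{X_{n_0}=x_0\}\mid\mathcal{F}^X_{]-\infty,m]}]$ converges by L\'evy's downward theorem, and its tail-measurable limit recovers the marginal; the only cosmetic difference is that the paper first uses the triviality half of condition~2 to make $L$ a constant and then invokes the agreement half, whereas you skip straight to comparing $\mathbf{E}_{\pi}[L]$ with $\mathbf{E}_{\tilde\pi}[L]$ via the agreement half alone, which is slightly more economical.

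For $(1)\Rightarrow(2)$ your route is genuinely different. The paper argues by contrapositive (Proposition~\ref{prop:non_triv_implies_card>1}): given a non-trivial tail event $A$, it \emph{constructs} a second element of $\Pi_M$ by conditioning, setting $\tilde{\mathbf{P}}:=\mathbf{P}_{\pi}(\cdot\mid A)$ and checking that its one-dimensional marginals $\tilde\pi_n(\cdot)=h_n(\cdot)\pi_n(\cdot)$ again form a sequence of absolute probabilities for $M$. You instead prove tail triviality directly as a mixing statement, using (\ref{eq:M_conv_to_pi}) to show $\mathbf{P}_{\pi}(A\cap B)=\mathbf{P}_{\pi}(A)\mathbf{P}_{\pi}(B)$ for cylinders $B$ and then extending by a $\pi$--$\lambda$ argument. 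Your approach is shorter and more in the spirit of a standard $0$--$1$ law; the paper's approach has the compensating virtue of being constructive, explicitly exhibiting the competing absolute-probability sequence that obstructs unicity, which connects more transparently with the later HMM discussion (e.g.\ the calculations around (\ref{eq:tail_triv_abs_probs})).
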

When 1. holds then obviously $\mathbf{P}_{\pi}(A)=\mathbf{P}_{\tilde{\pi}}(A)$.
The proof of 1.$\Rightarrow$2. is completed by Proposition~\ref{prop:non_triv_implies_card>1}.
The implication 2.$\Rightarrow$1. is the subject of Proposition~\ref{prop:triv_imples_card_1}.
\begin{prop}
\label{prop:non_triv_implies_card>1} If there exists $\pi\in\Pi_{M}$
and $A\in\bigcap_{n\in\mathbb{T}}\mathcal{F}_{]-\infty,n]}^{X}$ such
that $\mathbf{P}_{\pi}(A)\in]0,1[$, then $\text{card}(\Pi_{M})>1$.\end{prop}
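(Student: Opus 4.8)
The plan is to use the nontrivial tail event $A$ to manufacture a \emph{second} sequence of absolute probabilities, distinct from $\pi$, by conditioning $\mathbf{P}_{\pi}$ on $A$. Write $p:=\mathbf{P}_{\pi}(A)\in\,]0,1[$ and define the probability measure $\mathbf{P}':=\mathbf{P}_{\pi}(\,\cdot\mid A)$ on $(\Omega^{X},\mathcal{F}^{X})$. I would then set $\pi'_{n}(\cdot):=\mathbf{P}'(X_{n}=\cdot)$ and show that $\pi'=(\pi'_{n})_{n\in\mathbb{T}}\in\Pi_{M}$ while $\pi'\neq\pi$, which immediately gives $\text{card}(\Pi_{M})>1$.

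First I would compute the finite-dimensional distributions of $\mathbf{P}'$. Fix $n\in\mathbb{T}$ and $(x_{n},\ldots,x_{0})$. Since $A$ is a tail event, $A\in\mathcal{F}_{]-\infty,n]}^{X}=\sigma(X_{k};k\leq n)$, so $A\cap\{X_{n}=x_{n}\}$ is measurable with respect to the past up to time $n$. Appealing to the Markov property under $\mathbf{P}_{\pi}$ --- namely that, conditionally on $X_{n}$, the future coordinates $(X_{n+1},\ldots,X_{0})$ are independent of $\sigma(X_{k};k\leq n)$ --- I would factor
\begin{align*}
\mathbf{P}_{\pi}\big(\{X_{n}=x_{n},\ldots,X_{0}=x_{0}\}\cap A\big)=\mathbf{P}_{\pi}\big(A\cap\{X_{n}=x_{n}\}\big)\prod_{k=n+1}^{0}M_{k}(x_{k-1},x_{k}).
\end{align*}
Dividing by $p$ and recognizing $\pi'_{n}(x_{n})=p^{-1}\mathbf{P}_{\pi}(A\cap\{X_{n}=x_{n}\})$ gives
\begin{align*}
\mathbf{P}'(X_{n}=x_{n},\ldots,X_{0}=x_{0})=\pi'_{n}(x_{n})\prod_{k=n+1}^{0}M_{k}(x_{k-1},x_{k}),
\end{align*}
which is exactly the form (\ref{eq:P_pi_fidi}) with $\pi$ replaced by $\pi'$.

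Next I would confirm $\pi'\in\Pi_{M}$. Because the displayed distributions are restrictions of the single measure $\mathbf{P}'$ to cylinder sets, they are automatically Kolmogorov-consistent; summing out the earliest coordinate and matching the distributions at consecutive levels forces the absolute-probability identity $\sum_{z}\pi'_{n-1}(z)M_{n}(z,x)=\pi'_{n}(x)$ for all $n$ and $x$, so $\pi'$ is indeed a sequence of absolute probabilities for $M$. Finally I would argue $\pi'\neq\pi$: were $\pi'=\pi$, the two measures would share all finite-dimensional distributions and hence agree on $\mathcal{F}^{X}$, in particular on $A$; but $\mathbf{P}'(A)=1$ whereas $\mathbf{P}_{\pi}(A)=p<1$, a contradiction. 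Hence $\Pi_{M}$ contains at least the two distinct elements $\pi$ and $\pi'$.

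The only genuinely delicate step is the factorization via the Markov property: one must check that conditioning on the remote-past tail event $A$ does not disturb the forward transition structure. This is precisely what the conditional independence of $(X_{n+1},\ldots,X_{0})$ from $\sigma(X_{k};k\leq n)$ given $X_{n}$ delivers, and it uses crucially that $A$, being a tail event, lies in $\sigma(X_{k};k\leq n)$ for \emph{every} $n\in\mathbb{T}$, so that the argument can be run at every truncation level simultaneously.
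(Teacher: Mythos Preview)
Your proof is correct and follows essentially the same approach as the paper: both condition $\mathbf{P}_{\pi}$ on the tail event $A$, verify that the resulting measure still has the Markov transition structure $(M_{n})$, read off a new sequence of absolute probabilities from its one-dimensional marginals, and obtain a contradiction if this sequence coincides with $\pi$. The paper phrases the factorization via the Radon--Nikodym derivative $Z=\mathbb{I}_{A}/\mathbf{P}_{\pi}(A)$ and the identity $\mathbf{E}_{\pi}[Z\mid\mathcal{F}_{[n,0]}^{X}]=h_{n}(X_{n})$, whereas you invoke the Markov property directly to factor $\mathbf{P}_{\pi}(\{X_{n}=x_{n},\ldots,X_{0}=x_{0}\}\cap A)$; these are the same computation in slightly different dress.
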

\begin{proof}
Let $\pi$ and $A$ be as in the statement of the proposition and
with $Z(\omega):=\mathbb{I}_{A}(\omega)/\mathbf{P}_{\pi}(A)$, define
a new probability measure $\tilde{\mathbf{P}}$ on $(\Omega^{X},\mathcal{F}^{X})$
by $\tilde{\mathbf{P}}(d\omega):=Z(\omega)\mathbf{P}_{\pi}(d\omega)$,
i.e., $\tilde{\mathbf{P}}(\cdot)=\mathbf{P}_{\pi}(\cdot\,|A)$. Define
also the sequence of marginal distributions $\tilde{\pi}=(\tilde{\pi}_{n})_{n\in\mathbb{T}}$,
$\tilde{\pi}_{n}(\cdot):=\tilde{\mathbf{P}}(X_{n}\in\cdot\,)$. We
are going to show that $\tilde{\pi}\in\Pi_{M}$ and $\tilde{\pi}\neq\pi$,
thus proving $\text{card}(\Pi_{M})>1$ as desired.

The Markov property of $X$ under $\mathbf{P}_{\pi}$ and the fact
that $Z$ is measurable w.r.t.\ to $\bigcap_{n}\mathcal{F}_{]-\infty,n]}^{X}$
combine to give $\mathbf{E}_{\pi}[Z|\mathcal{F}_{[n,0]}^{X}]=\mathbf{E}_{\pi}[Z|\sigma(X_{n})]$,
$\mathbf{P}$-a.s., so, for each $n\in\mathbb{T}$, there exists a
measurable function $h_{n}$ on $E$, uniquely defined and nonnegative
$\pi_{n}$-almost everywhere, such that $h_{n}(X_{n})=\mathbf{E}_{\pi}[Z|\mathcal{F}_{[n,0]}^{X}]$,
$\mathbf{P}$-a.s.  We then have, for any $n\in\mathbb{T}$ and $(x_{n},\ldots,x_{0})\in E^{|n|+1}$,
\begin{eqnarray}
 &  & \tilde{\mathbf{P}}\left(\{\omega:X_{n}(\omega)=x_{n},\ldots,X_{0}(\omega)=x_{0}\}\right)\nonumber \\
 &  & =\int_{\{\omega:X_{n}(\omega)=x_{n},\ldots,X_{0}(\omega)=x_{0}\}}Z(\omega)d\mathbb{\mathbf{P}}_{\pi}\nonumber \\
 &  & =\int_{\{\omega:X_{n}(\omega)=x_{n},\ldots,X_{0}(\omega)=x_{0}\}}\mathbf{E}_{\pi}[Z|\mathcal{F}_{[n,0]}^{X}](\omega)d\mathbb{\mathbf{P}}_{\pi}\nonumber \\
 &  & =\int_{\{\omega:X_{n}(\omega)=x_{n},\ldots,X_{0}(\omega)=x_{0}\}}h_{n}(X_{n}(\omega))d\mathbb{\mathbf{P}}_{\pi}\nonumber \\
 &  & =h_{n}(x_{n})\pi_{n}(x_{n})\prod_{k=n+1}^{0}M_{k}(x_{k-1},x_{k}).\label{eq:P_tilde_fidi}
\end{eqnarray}
From (\ref{eq:P_tilde_fidi}) we immediately deduce three facts. Firstly,
for each $n\in\mathbb{T}$ and $x\in E$, $\tilde{\pi}_{n}(x)=h_{n}(x)\pi_{n}(x)$.
Secondly, $\tilde{\pi}\in\Pi_{M}$. Thirdly, the finite dimensional
marginals of $\tilde{\mathbf{P}}$ coincide with those of $\mathbf{P}_{\tilde{\pi}}$,
so by a monotone class argument, $\tilde{\mathbf{P}}=\mathbf{P}_{\tilde{\pi}}$.

It remains to prove that $\tilde{\pi}\neq\pi$. Yet again by a monotone
class argument, note that for any $\mu,\nu\in\Pi_{M}$, if for all
$n\in\mathbb{T}$, $\mathbf{P}_{\mu}^{(n)}=\mathbf{P}_{\nu}^{(n)}$
, then $\mathbf{P}_{\mu}=\mathbf{P}_{\nu}$. We have already seen
that $\tilde{\mathbf{P}}=\mathbf{P}_{\tilde{\pi}}$, and by construction,
$\tilde{\mathbf{P}}\neq\mathbf{P}_{\pi}$, so by applying the contrapositive
of the implication in the previous sentence, there must exist some
$n\in\mathbb{T}$ such that $\mathbf{P}_{\pi}^{(n)}\neq\mathbf{P}_{\tilde{\pi}}^{(n)}$,
which is only possible if there exists some $x$ such that $\pi_{n}(x)\neq\tilde{\pi}_{n}(x)$.
This completes the proof. \end{proof}
\begin{prop}
\label{prop:triv_imples_card_1} If for all $(\pi,\tilde{\pi},A)\in\Pi_{M}\times\Pi_{M}\times\bigcap_{n\in\mathbb{T}}\mathcal{F}_{]-\infty,n]}^{X}$,
$\mathbf{P}_{\pi}(A)=\mathbf{P}_{\pi}(A)^{2}=\mathbf{P}_{\tilde{\pi}}(A)$,
then $\text{card}(\Pi_{M})=1$.\end{prop}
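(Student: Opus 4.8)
The plan is to show directly that any two members of $\Pi_M$ coincide; since $\Pi_M\neq\emptyset$ by Fact~\ref{fact:there-always-exists}, this gives $\mathrm{card}(\Pi_M)=1$. Write $\mathcal T:=\bigcap_{n\in\mathbb T}\mathcal F_{]-\infty,n]}^X$ for the tail $\sigma$-algebra. The hypothesis applied with $\tilde\pi=\pi$ forces $\mathbf P_\pi(A)=\mathbf P_\pi(A)^2$, hence $\mathbf P_\pi(A)\in\{0,1\}$ for every $A\in\mathcal T$; that is, $\mathcal T$ is $\mathbf P_\pi$-trivial for each $\pi\in\Pi_M$. I would exploit this triviality through a reverse (backward) martingale argument, the wrinkle being that the relevant almost-sure limits initially live under different laws, which I reconcile by passing to a mixture.

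So fix arbitrary $\pi,\tilde\pi\in\Pi_M$ and set $\bar\pi:=\tfrac12(\pi+\tilde\pi)$. The defining relation of $\Pi_M$ is linear in its argument, so $\Pi_M$ is convex and $\bar\pi\in\Pi_M$; comparing the finite-dimensional laws $(\ref{eq:P_pi_fidi})$ and invoking the same monotone class argument used in the proof of Proposition~\ref{prop:non_triv_implies_card>1} gives $\mathbf P_{\bar\pi}=\tfrac12(\mathbf P_\pi+\mathbf P_{\tilde\pi})$. In particular $\mathbf P_\pi\le 2\,\mathbf P_{\bar\pi}$, so $\mathbf P_\pi\ll\mathbf P_{\bar\pi}$.

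Now fix $k\in\mathbb T$ and a bounded $f:E\to\mathbb R$. Under any $\mu\in\Pi_M$ the process $X$ is Markov with transitions $M$, so for $n\le k$ the Markov property yields $\mathbf E_\mu[f(X_k)\mid\mathcal F_{]-\infty,n]}^X]=\mathbf E_\mu[f(X_k)\mid\sigma(X_n)]=\sum_{z}M_{n,k}(X_n,z)f(z)=:W_n$, and crucially $W_n$ is a deterministic function of $X_n$ that does not depend on $\mu$. As $n\to-\infty$ the $\sigma$-algebras $\mathcal F_{]-\infty,n]}^X$ decrease to $\mathcal T$, so $(W_n)$ is a reverse martingale and by reverse (downward) martingale convergence $W_n\to\mathbf E_\mu[f(X_k)\mid\mathcal T]$ both $\mathbf P_\mu$-a.s. and in $L^1(\mathbf P_\mu)$. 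By $\mathbf P_\mu$-triviality of $\mathcal T$ this limit is the constant $\mathbf E_\mu[f(X_k)]=\sum_z\mu_k(z)f(z)=:\mu_k f$. Taking $\mu=\bar\pi$ gives $W_n\to\bar\pi_k f$, $\mathbf P_{\bar\pi}$-a.s., hence also $\mathbf P_\pi$-a.s. because $\mathbf P_\pi\ll\mathbf P_{\bar\pi}$; taking $\mu=\pi$ gives $W_n\to\pi_k f$, $\mathbf P_\pi$-a.s. A real sequence has at most one limit, so on a set of full $\mathbf P_\pi$-measure the two constants agree, giving $\pi_k f=\bar\pi_k f=\tfrac12(\pi_k f+\tilde\pi_k f)$, i.e. $\pi_k f=\tilde\pi_k f$. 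Letting $f$ and $k$ range proves $\pi_k=\tilde\pi_k$ for all $k\in\mathbb T$, that is $\pi=\tilde\pi$.

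The one genuinely delicate point — the step I would be most careful about — is that the two backward-martingale limits are a priori identified only almost surely under the distinct laws $\mathbf P_\pi$ and $\mathbf P_{\bar\pi}$; the mixture construction together with the domination $\mathbf P_\pi\ll\mathbf P_{\bar\pi}$ is precisely what lets me read off both limits on one common full-measure set and equate the resulting constants. Everything else (convexity of $\Pi_M$, the identification of $\mathbf P_{\bar\pi}$ with the mixture, $\mu$-independence of $W_n$, and the reverse-martingale convergence) is routine. I note in passing that this argument uses only the triviality half $\mathbf P_\pi(A)=\mathbf P_\pi(A)^2$ of the hypothesis, and not the agreement $\mathbf P_\pi(A)=\mathbf P_{\tilde\pi}(A)$ across members of $\Pi_M$.
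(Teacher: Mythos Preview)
Your proof is correct and shares its backbone with the paper's argument: both compute $\mathbf E_\mu[f(X_k)\mid\mathcal F_{]-\infty,n]}^X]=M_{n,k}f(X_n)$ via the Markov property, invoke backward martingale convergence as $n\to-\infty$, and use tail triviality to identify the limit as the constant $\mu_k f$. The genuine difference lies in how the two a.s.\ limits, obtained under distinct measures $\mathbf P_\pi$ and $\mathbf P_{\tilde\pi}$, are brought onto a common full-measure set.

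The paper observes that the event $A_x:=\{\lim_n M_{n,k}(X_n,x)=\pi_k(x)\}$ is itself a tail event and then invokes the agreement clause $\mathbf P_\pi(A_x)=\mathbf P_{\tilde\pi}(A_x)$ of the hypothesis to transport $\mathbf P_\pi(A_x)=1$ to $\mathbf P_{\tilde\pi}(A_x)=1$, whence $\pi_k(x)=\tilde\pi_k(x)$. You instead exploit convexity of $\Pi_M$: form $\bar\pi=\tfrac12(\pi+\tilde\pi)\in\Pi_M$, note $\mathbf P_{\bar\pi}=\tfrac12(\mathbf P_\pi+\mathbf P_{\tilde\pi})$ and hence $\mathbf P_\pi\ll\mathbf P_{\bar\pi}$, and use absolute continuity to read the $\mathbf P_{\bar\pi}$-a.s.\ limit $\bar\pi_k f$ on a $\mathbf P_\pi$-full set. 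Your route has the pleasant by-product, which you note, that only the triviality half $\mathbf P_\pi(A)=\mathbf P_\pi(A)^2$ for every $\pi\in\Pi_M$ is needed; the cross-agreement $\mathbf P_\pi(A)=\mathbf P_{\tilde\pi}(A)$ is in fact a consequence rather than an independent assumption. The paper's route is slightly shorter given the full hypothesis, since it avoids the mixture construction altogether.
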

\begin{proof}
Fix arbitrarily $x\in E$, $k\in\mathbb{T}$ and let $\pi$ be any
member of $\Pi_{M}$. For any $n\leq k$, we have $\mathbf{P}_{\pi}(X_{k}=x|\mathcal{F}_{]-\infty,n]}^{X})=M_{n,k}(X_{n},x)$,
$\mathbf{P}_{\pi}$-a.s.\ and since $\mathcal{F}_{]-\infty,n]}^{X}\searrow\bigcap_{n\in\mathbb{T}}\mathcal{F}_{]-\infty,n]}^{X}$,
a classical martingale convergence theorem \citep[p. 331, Theorem 4.3]{doob1953stochastic}
dictates that $\lim_{n\rightarrow-\infty}\mathbf{P}_{\pi}(X_{k}=x|\mathcal{F}_{]-\infty,n]}^{X})=\mathbf{P}_{\pi}(X_{k}=x|\bigcap_{n\in\mathbb{T}}\mathcal{F}_{]-\infty,n]}^{X})$,
$\mathbf{P}_{\pi}$-a.s. Under the hypothesis of the proposition,
$\mathbf{P}_{\pi}(A)\in\{0,1\}$ for all $A\in\bigcap_{n\in\mathbb{T}}\mathcal{F}_{]-\infty,n]}^{X}$,
and by construction $\mathbf{P}_{\pi}(X_{k}=x)=\pi_{k}(x)$, so we
obtain:
\begin{equation}
\lim_{n\rightarrow-\infty}M_{n,k}(X_{n},x)=\pi_{k}(x),\quad\mathbf{P}_{\pi}-\text{a.s.}\label{eq:M_to_mu}
\end{equation}

Now choose any $\tilde{\pi}\in\Pi_{M}$. Repeating the above argument
we obtain
\begin{equation}
\lim_{n\rightarrow-\infty}M_{n,k}(X_{n},x)=\tilde{\pi}_{k}(x),\quad\mathbf{P}_{\tilde{\pi}}-\text{a.s.},\label{eq:M_to_pi}
\end{equation}
and since $A_{x}:=\{\lim_{n\rightarrow-\infty}M_{n,k}(X_{n},x)=\pi_{k}(x)\}\in\bigcap_{n\in\mathbb{T}}\mathcal{F}_{]-\infty,n]}^{X}$,
the hypothesis of the proposition dictates $\mathbf{P}_{\pi}(A_{x})=\mathbf{P}_{\tilde{\pi}}(A_{x})$,
so from (\ref{eq:M_to_mu}) and (\ref{eq:M_to_pi}) we find $\pi_{k}(x)=\tilde{\pi}_{k}(x)$.
Since $x$ and $k$ were arbitrary, we have thus established $\pi=\tilde{\pi}$,
and since $\pi$ and $\tilde{\pi}$ were arbitrary members of $\Pi_{M}$
we have proved that $\text{card}(\Pi_{M})=1$.
\end{proof}

\section{Perfect sampling for hidden Markov models\label{sec:HMM}}

\subsection{The model\label{sub:The-model-hmm}}

Throughout Section~\ref{sec:HMM}, we take $\Omega^{X}:=E^{\mathbb{T}}$
equipped with the product $\sigma$-algebra $\mathcal{B}(E)^{\otimes\mathbb{T}}$,
we introduce $F$ a non-empty, Polish state-space with Borel $\sigma$-algebra
denoted by $\mathcal{B}(F)$, and we consider $\Omega^{Y}:=F^{\mathbb{T}}$
equipped with the product $\sigma$-algebra $\mathcal{B}(F)^{\otimes\mathbb{T}}$.
Define $\Omega:=\Omega^{X}\times\Omega^{Y}$ and the coordinate projections:
$\zeta,\eta$ by
\[
\zeta:(x,y)\in\Omega\mapsto x\in\Omega^{X},\quad\eta:(x,y)\in\Omega\mapsto y\in\Omega^{Y},
\]
 and $(\tilde{X}_{n})_{n\in\mathbb{T}}$, $(\tilde{Y}_{n})_{n\in\mathbb{T}}$
by
\begin{eqnarray*}
 &  & \tilde{X}_{n}:x=(\ldots,x_{-1},x_{0})\in\Omega^{X}\mapsto x_{n}\in E,\\
 &  & \tilde{Y}_{n}:y=(\ldots,y_{-1},y_{0})\in\Omega^{Y}\mapsto y_{n}\in F.
\end{eqnarray*}
 Then let
\[
X_{n}:=\tilde{X}_{n}\circ\zeta,\quad Y_{n}:=\tilde{Y}_{n}\circ\eta,
\]
so clearly $X_{n}:\omega=(x,y)\in\Omega\mapsto x_{n}\in E$ and $Y_{n}:\omega=(x,y)\in\Omega\mapsto y_{n}\in F$.
We shall write $X$ and $Y$ for respectively the $E^{\mathbb{T}}$
and $F^{\mathbb{T}}$-valued random variables $(X_{n})_{n\in\mathbb{T}}$
and $(Y_{n})_{n\in\mathbb{T}}$.

Let $\mathcal{F}$ be the Borel $\sigma$-algebra on $\Omega$ and
for $I\subset\mathbb{T}$, define
\begin{eqnarray*}
\mathcal{F}_{I}^{\tilde{X}}:=\sigma(\tilde{X}_{n};n\in I), &  & \mathcal{F}_{I}^{X}:=\sigma(X_{n};n\in I),\\
\mathcal{F}_{I}^{\tilde{Y}}:=\sigma(\tilde{Y}_{n};n\in I), &  & \mathcal{F}_{I}^{Y}:=\sigma(Y_{n};n\in I),
\end{eqnarray*}
\[
\mathcal{F}_{I}:=\mathcal{F}_{I}^{X}\vee\mathcal{F}_{I}^{Y},
\]
and $\mathcal{F}^{\tilde{X}}:=\mathcal{F}_{\mathbb{T}}^{\tilde{X}}$,
$\mathcal{F}^{X}:=\mathcal{F}_{\mathbb{T}}^{X}$, $\mathcal{F}^{\tilde{Y}}:=\mathcal{F}_{\mathbb{T}}^{\tilde{Y}}$,
$\mathcal{F}^{Y}:=\mathcal{F}_{\mathbb{T}}^{Y}$ .

Now introduce two sequences of probability kernels $M=(M_{n})_{n\in\mathbb{T}}$
and $G=(G_{n})_{n\in\mathbb{T}}$, with each $M_{n}:E\times\mathcal{B}(E)\rightarrow[0,1]$
and $G_{n}:E\times\mathcal{B}(F)\rightarrow[0,1]$. We assume that
$G_{n}(x,dy)=g_{n}(x,y)\psi(dy)$ for some $g_{n}:E\times F\rightarrow[0,\infty[$
and $\psi$ a $\sigma$- finite measure on $(F,\mathcal{B}(F))$.

Throughout Section~\ref{sec:HMM}, $\mathbf{P}$ is a probability
measure on $\left(\Omega,\mathcal{F}\right)$ under which $(X,Y)$
is a hidden Markov model, constructed as follows. Fix some $\pi=(\pi_{n})_{n\in\mathbb{T}}\in\Pi_{M}$.
For each $n\in\mathbb{T}$ define a probability $\mathbf{P}^{(n)}$
on $(\mathcal{B}(E)\otimes\mathcal{B}(F))^{\otimes(|n|+1)}$ by
\begin{equation}
\mbox{\ensuremath{\mathbf{P}}}^{(n)}(A)=\int_{A}\pi_{n}(dx_{n})G_{n}(x_{n},dy_{n})\prod_{k=n+1}^{0}M_{k}(x_{k-1},dx_{k})G_{k}(x_{k},dy_{k}),\label{eq:P_HMM_fidi}
\end{equation}
with the convention that the product is unity when $n=0$. Since $\pi\in\Pi_{M}$,
the $\mbox{\ensuremath{\mathbf{P}}}^{(n)}$ are consistent, giving
rise via the usual extension argument to a probability measure $\mathbf{P}$
on $(\Omega,\mathcal{F})$. Expectation w.r.t. $\mathbf{P}$ is denoted
by $\mathbf{E}$. We shall write $\mathbf{P}\circ Y^{-1}$ for the
pushforward of $\mathbf{P}$ by $Y$, i.e. $(\mathbf{P}\circ Y^{-1})(H)=\mathbf{P}(\{\omega\in\Omega:Y(\omega)\in H\})$,
for $H\in\mathcal{B}(F)^{\otimes\mathbb{T}}$.

Let us now remark upon some details of this setup (the analogues of
the following properties for an HMM on a nonnegative time horizon
are well known and the arguments involved in establishing them depend
only superficially on the direction of time). Under $\mathbf{P}$,
the bi-variate process $(X_{n},Y_{n})_{n\in\mathbb{T}}$ is Markov,
which implies that the following holds $\mathbf{P}-\text{a.s.}$,
\begin{equation}
\mathbf{P}(X_{n}\in\cdot\,|\mathcal{F}_{]-\infty,0]}^{Y}\vee\mathcal{F}_{]-\infty,n-1]}^{X})=\mathbf{P}(X_{n}\in\cdot\,|\mathcal{F}_{]-\infty,0]}^{Y}\vee\sigma(X_{n-1})).\label{eq:hmm_cond_ind_1}
\end{equation}
Moreover, under $\mathbf{P}$, $X$ is a Markov, with for each $n\in\mathbb{T}$,
$X_{n}$ distributed according to $\pi_{n}$ and $X_{n}|X_{n-1}\sim M_{n}(X_{n-1},\cdot\,)$.
The observations $Y$ are conditionally independent given $X$, and
the conditional distribution of $Y_{n}$ given $X$ is $G_{n}(X_{n},\cdot\,)$.
It follows from this conditional-independence structure that the following
holds $\mathbf{P}-\text{a.s.}$,
\begin{equation}
\mathbf{P}(X_{n}\in\cdot\,|\mathcal{F}_{]-\infty,0]}^{Y}\vee\sigma(X_{n-1}))=\mathbf{P}(X_{n}\in\cdot\,|\mathcal{F}_{[n,0]}^{Y}\vee\sigma(X_{n-1})).\label{eq:hmm_cond_ind_2}
\end{equation}

\subsection{Transition kernels of the conditional signal process}

Define the sequence of functions $(\phi_{n})_{n\in\mathbb{T}}$, each
$\phi_{n}:E\times F^{n}\rightarrow[0,\infty[$, recursively as
\begin{eqnarray}
 &  & \phi_{0}(x,y_{0}):=\sum_{x^{\prime}\in E}M_{0}(x,x^{\prime})g_{0}(x^{\prime},y_{0}),\nonumber \\
 &  & \phi_{n-1}(x,y_{n-1:0}):=\sum_{x^{\prime}\in E}M_{n-1}(x,x^{\prime})g_{n-1}(x^{\prime},y_{n-1})\phi_{n}(x^{\prime},y_{n:0}).\label{eq:phi_defn}
\end{eqnarray}

Now with $y=(y_{n})_{n\in\mathbb{T}}$, introduce for each $n\in\mathbb{T}$,
\begin{equation}
M_{n}^{y}(x,x^{\prime}):=\begin{cases}
\dfrac{M_{n}(x,x^{\prime})g_{n}(x^{\prime},y_{n})\phi_{n+1}(x^{\prime},y_{n+1:0})}{\phi_{n}(x,y_{n:0})}, & \;\phi_{n}(x,y_{n:0})>0,\\
M_{n}(x,x^{\prime}), & \;\phi_{n}(x,y_{n:0})=0,
\end{cases}\label{eq:M^y_defn}
\end{equation}
with the convention that $\phi_{1}(x,y_{1:0})\equiv1$. Similarly
to (\ref{eq:recurse M}), let
\begin{equation}
M_{k,k}^{y}:=Id,\quad\quad M_{n-1,k}^{y}(x,x^{\prime}):=\sum_{z\in E}M_{n}^{y}(x,z)M_{n,k}^{y}(z,x^{\prime}),\quad n\leq k.\label{eq:recurse My}
\end{equation}

According to (\ref{eq:M^y_defn}), for each $y$, $M_{n}^{y}(\cdot,\cdot)$
is clearly a Markov kernel on $E$. This kernel provides a version
of the conditional probabilities in (\ref{eq:hmm_cond_ind_1})--(\ref{eq:hmm_cond_ind_2}),
in the sense of the following lemma, whose proof is given in the appendix.
\begin{lem}
\label{lem:M^Y_is_a_version}For each $n\in\mathbb{T}$ and $x\in E$,
\[
\mathbf{P}(X_{n}=x|\mathcal{F}_{[n,0]}^{Y}\vee\sigma(X_{n-1}))=M_{n}^{Y}(X_{n-1},x),\quad\mathbf{P}-\text{a.s.}
\]

\end{lem}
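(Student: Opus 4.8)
The plan is to verify directly that $M_n^Y(X_{n-1},x)$ is a version of the stated conditional probability, i.e.\ that for every $A\in\mathcal{F}_{[n,0]}^{Y}\vee\sigma(X_{n-1})$,
\[
\mathbf{P}(\{X_n=x\}\cap A)=\mathbf{E}[M_n^Y(X_{n-1},x)\mathbb{I}_A].
\]
The crucial preliminary step is to identify the role of the functions $\phi_n$ from (\ref{eq:phi_defn}): by downward induction on $n$, using the recursion (\ref{eq:phi_defn}) and the finite-dimensional law (\ref{eq:P_HMM_fidi}), $\phi_n(x,y_{n:0})$ is a version of the conditional density of $(Y_n,\ldots,Y_0)$ given $X_{n-1}=x$ with respect to $\psi^{\otimes(|n|+1)}$; equivalently, for bounded measurable $f_n,\ldots,f_0$ on $F$,
\[
\mathbf{E}\Bigl[\prod_{k=n}^{0}f_k(Y_k)\,\Big|\,X_{n-1}=x\Bigr]=\int\phi_n(x,y_{n:0})\prod_{k=n}^{0}f_k(y_k)\,\psi^{\otimes(|n|+1)}(dy_{n:0}).
\]
Both the base case ($n=0$) and the inductive step reduce, via (\ref{eq:P_HMM_fidi}), to integrating each observation variable against $g_k\psi$ and summing over the hidden path.

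To prove the lemma it then suffices, by a monotone class argument, to check the displayed identity for $A$ ranging over a $\pi$-system generating $\mathcal{F}_{[n,0]}^{Y}\vee\sigma(X_{n-1})$; I would use test functions $h=\mathbb{I}\{X_{n-1}=x_{n-1}\}\prod_{k=n}^{0}f_k(Y_k)$ with $x_{n-1}\in E$ and $f_n,\ldots,f_0$ bounded measurable. Computing $\mathbf{E}[\mathbb{I}\{X_n=x\}h]$ from (\ref{eq:P_HMM_fidi}) at level $n-1$, the observation $Y_{n-1}$ is marginalised away (contributing a factor $1$ since $G_{n-1}(x_{n-1},\cdot)$ is a probability), and summing over the future path $X_{n+1},\ldots,X_0$ and integrating the remaining observations gives
\[
\pi_{n-1}(x_{n-1})\,M_n(x_{n-1},x)\Bigl(\int g_n(x,y_n)f_n(y_n)\psi(dy_n)\Bigr)\Psi(x),
\]
where $\Psi(x)$ collects the times $n+1,\ldots,0$ and, by the preliminary step, equals $\int\phi_{n+1}(x,y_{n+1:0})\prod_{k=n+1}^{0}f_k(y_k)\,\psi^{\otimes|n|}(dy_{n+1:0})$.

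For the other side I would write $\mathbf{E}[M_n^Y(X_{n-1},x)h]$ as an integral against the joint law of $(X_{n-1},Y_{n:0})$, whose density is $\pi_{n-1}(x_{n-1})\phi_n(x_{n-1},y_{n:0})$ by the preliminary step. On $\{\phi_n(x_{n-1},y_{n:0})>0\}$ the definition (\ref{eq:M^y_defn}) produces the decisive cancellation
\[
M_n^{y}(x_{n-1},x)\,\phi_n(x_{n-1},y_{n:0})=M_n(x_{n-1},x)\,g_n(x,y_n)\,\phi_{n+1}(x,y_{n+1:0}),
\]
while on $\{\phi_n=0\}$ the density factor $\phi_n$ annihilates the integrand, so the value given to $M_n^{y}$ there (namely $M_n$) is irrelevant. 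After this cancellation the integral reduces to exactly the expression obtained for $\mathbf{E}[\mathbb{I}\{X_n=x\}h]$, completing the verification.

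The step I expect to require the most care is not conceptual but measure-theoretic bookkeeping: one must confirm that the exceptional set $\{\phi_n(X_{n-1},Y_{n:0})=0\}$ is $\mathbf{P}$-null (immediate since the joint density $\pi_{n-1}\phi_n$ vanishes there), so that the piecewise definition (\ref{eq:M^y_defn}) does not disturb the $\mathbf{P}$-a.s.\ identity, and that the marginalisations over $Y_{n-1}$ and over $X_{n+1},\ldots,X_0$ are organised so that the same quantity $\Psi(x)$ appears on both sides. Granting the interpretation of $\phi_n$ from the preliminary step, the matching of the two sides is then immediate.
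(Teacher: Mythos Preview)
Your proposal is correct and follows essentially the same approach as the paper: both first identify $\phi_n(x,y_{n:0})$ as the conditional density of $(Y_n,\ldots,Y_0)$ given $X_{n-1}=x$, then reduce via a monotone class argument to checking the defining identity on generating sets, and the verification hinges on the cancellation $M_n^{y}(x_{n-1},x)\,\phi_n(x_{n-1},y_{n:0})=M_n(x_{n-1},x)\,g_n(x,y_n)\,\phi_{n+1}(x,y_{n+1:0})$ on $\{\phi_n>0\}$. The only cosmetic difference is that the paper routes the computation through conditioning on $\sigma(X_{n-1})$ and the tower property, whereas you compute both sides directly as integrals against the joint law; your explicit remark that $\{\phi_n(X_{n-1},Y_{n:0})=0\}$ is $\mathbf{P}$-null is exactly what the paper's post-proof Remark acknowledges implicitly.
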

We next establish the existence of a particular $y$-dependent sequence
of absolute probabilities for the Markov kernels $M^{y}=(M_{n}^{y})_{n\in\mathbb{T}}$.
\begin{lem}
\label{lem:existence_of_pi^y}For each $n\in\mathbb{T}$, there exists
a probability kernel $\mu_{n}^{\cdot}(\cdot):\Omega^{Y}\times E\rightarrow[0,1]$,
such that for all $x\in E$,
\[
\mathbf{P}(X_{n}=x|\mathcal{F}_{]-\infty,0]}^{Y})=\mu_{n}^{Y}(x),\quad\mathbf{P}-a.s.,
\]
and
\[
\sum_{x^{\prime}\in E}\mu_{n-1}^{Y}(x^{\prime})M_{n}^{Y}(x^{\prime},x)=\mu_{n}^{Y}(x),\quad\mathbf{P}-a.s.
\]
\end{lem}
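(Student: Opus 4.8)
The plan is to build the kernel $\mu_n^{\cdot}$ by a Doob--Dynkin factorization of the conditional probabilities $\mathbf{P}(X_n=x\mid\mathcal{F}^{Y})$, and then to obtain the absolute-probability relation by conditioning on $X_{n-1}$ and feeding in Lemma~\ref{lem:M^Y_is_a_version}.

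First I would treat existence. For each fixed $x\in E$ the random variable $\mathbf{P}(X_n=x\mid\mathcal{F}^{Y})$ is $\mathcal{F}^{Y}=\sigma(Y)$-measurable; since $\Omega^{Y}=F^{\mathbb{T}}$ is Polish, hence standard Borel, and $Y:\Omega\to\Omega^{Y}$ is the coordinate projection, the factorization lemma yields a measurable $\mu_n^{\cdot}(x):\Omega^{Y}\to[0,1]$ with $\mathbf{P}(X_n=x\mid\mathcal{F}^{Y})=\mu_n^{Y}(x)$, $\mathbf{P}$-a.s. Because $E$ is finite, the conditional-probability identities $\sum_{x}\mathbf{P}(X_n=x\mid\mathcal{F}^{Y})=1$ and $\mathbf{P}(X_n=x\mid\mathcal{F}^{Y})\in[0,1]$ (each holding $\mathbf{P}$-a.s.) show that $y\mapsto\mu_n^{y}(\cdot)$ is a probability distribution on $E$ for all $y$ outside a single $\mathbf{P}\circ Y^{-1}$-null set $N_n\subset\Omega^{Y}$; redefining $\mu_n^{y}$ on $N_n$ to be, say, the point mass at $1\in E$ makes $\mu_n^{\cdot}(\cdot)$ a genuine probability kernel without disturbing the a.s. identity. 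This delivers the first displayed claim.

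For the invariance relation, fix $n$ and apply the tower property across $\mathcal{F}^{Y}\subset\mathcal{F}^{Y}\vee\sigma(X_{n-1})$ (recall $\mathcal{F}^{Y}=\mathcal{F}^{Y}_{]-\infty,0]}$):
\[
\mu_n^{Y}(x)=\mathbf{P}(X_n=x\mid\mathcal{F}^{Y})=\mathbf{E}\big[\,\mathbf{P}(X_n=x\mid\mathcal{F}^{Y}\vee\sigma(X_{n-1}))\,\big|\,\mathcal{F}^{Y}\big],\quad\mathbf{P}\text{-a.s.}
\]
By (\ref{eq:hmm_cond_ind_1}) and (\ref{eq:hmm_cond_ind_2}) the inner conditional probability equals $\mathbf{P}(X_n=x\mid\mathcal{F}^{Y}_{[n,0]}\vee\sigma(X_{n-1}))$, which by Lemma~\ref{lem:M^Y_is_a_version} is $M_n^{Y}(X_{n-1},x)$, $\mathbf{P}$-a.s. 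Thus $\mu_n^{Y}(x)=\mathbf{E}[M_n^{Y}(X_{n-1},x)\mid\mathcal{F}^{Y}]$. Writing $M_n^{Y}(X_{n-1},x)=\sum_{x'\in E}\mathbb{I}(X_{n-1}=x')\,M_n^{Y}(x',x)$ and using that each $\omega\mapsto M_n^{Y(\omega)}(x',x)$ is $\mathcal{F}^{Y}$-measurable (so can be pulled out of the conditional expectation) together with $\mathbf{P}(X_{n-1}=x'\mid\mathcal{F}^{Y})=\mu_{n-1}^{Y}(x')$, I obtain
\[
\mu_n^{Y}(x)=\sum_{x'\in E}M_n^{Y}(x',x)\,\mathbf{P}(X_{n-1}=x'\mid\mathcal{F}^{Y})=\sum_{x'\in E}\mu_{n-1}^{Y}(x')\,M_n^{Y}(x',x),\quad\mathbf{P}\text{-a.s.},
\]
as required.

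The factorization and the normalization are routine. The one identity that needs a clear justification is that the version $M_n^{Y}(X_{n-1},\cdot)$ supplied by Lemma~\ref{lem:M^Y_is_a_version} for $\mathcal{F}^{Y}_{[n,0]}\vee\sigma(X_{n-1})$ also serves as a version for the larger $\mathcal{F}^{Y}\vee\sigma(X_{n-1})$; this is precisely the content of (\ref{eq:hmm_cond_ind_1})--(\ref{eq:hmm_cond_ind_2}), so no additional argument is needed. I expect the only real bookkeeping to be the handling of null sets: each step holds merely $\mathbf{P}$-a.s., but since $E$ is finite the finitely many exceptional sets (indexed by $x,x'\in E$) union to a $\mathbf{P}$-null set, leaving the stated relation valid $\mathbf{P}$-a.s.
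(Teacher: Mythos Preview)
Your proof is correct and follows essentially the same route as the paper's: existence of the kernel is immediate from finiteness of $E$ (you spell out the Doob--Dynkin factorization and null-set cleanup, which the paper leaves implicit), and the absolute-probability relation is obtained by the tower property together with (\ref{eq:hmm_cond_ind_2}) and Lemma~\ref{lem:M^Y_is_a_version}, exactly as in the paper. Your additional remark that each $M_n^{Y}(x',x)$ is $\mathcal{F}^{Y}$-measurable and can therefore be pulled out of the conditional expectation is a helpful justification of the last step, which the paper takes for granted.
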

\begin{proof}
Since $E$ is a finite set, the existence for any $n$ of a probability
kernel $\mu_{n}^{\cdot}(\cdot):\Omega^{Y}\times E\rightarrow[0,1]$
satisfying $\mathbf{P}(X_{n}=x|\mathcal{F}_{]-\infty,0]}^{Y})=\mu_{n}^{Y}(x)$,
$\mathbf{P}$-a.s. for all $x$, is immediate. Then by the tower property
of conditional expectation, Lemma~\ref{lem:M^Y_is_a_version} and
(\ref{eq:hmm_cond_ind_2}), the following equalities hold $\mathbf{P}$-a.s.,
\begin{eqnarray*}
\mu_{n}^{Y}(x) & = & \mathbf{P}(X_{n}=x|\mathcal{F}_{]-\infty,0]}^{Y})\\
 & = & \mathbf{E}[\mathbf{E}[\mathbb{I}\{X_{n}=x\}|\mathcal{F}_{]-\infty,0]}^{Y}\vee\sigma(X_{n-1})]|\mathcal{F}_{]-\infty,0]}^{Y}]\\
 & = & \mathbf{E}[M_{n}^{Y}(X_{n-1},x)|\mathcal{F}_{]-\infty,0]}^{Y}]\\
 & = & \sum_{x^{\prime}\in E}\mu_{n-1}^{Y}(x^{\prime})M_{n}^{Y}(x^{\prime},x).
\end{eqnarray*}

\end{proof}

\subsection{The coupling for the HMM\label{sub:The-coupling-for}}

Our next main objective is to apply the construction and results of
Section~\ref{sec:Perfect-sampling} to derive and study a perfect
sampling procedure associated with the Markov kernels $M^{y}$. The
setup is as follows. With $\Omega^{\xi}=(E^{s})^{\otimes\mathbb{T}}$
and $\mathcal{F}^{\xi}=(\mathcal{B}(E)^{\otimes s})^{\otimes\mathbb{T}}$,
let $\mathbf{Q}^{\cdot}(\cdot):\Omega^{Y}\times\mathcal{F}^{\xi}\rightarrow[0,1]$
be a probability kernel such that for each $y\in\Omega^{Y}$ the coordinate
projections $(\xi_{n}^{x};x\in E;n\in\mathbb{T})$ are distributed
under $\mathbf{Q}^{y}(\cdot)$ as
\begin{eqnarray}
 &  & (\xi_{n}^{x};x\in E,n\in\mathbb{T})\text{ are independent,}\label{eq:hmm_P_alg_1}\\
 &  & \mathbf{Q}^{y}(\xi_{n}^{x}=x^{\prime})=M_{n}^{y}(x,x^{\prime}),\quad(n,x,x^{\prime})\in\mathbb{T}\times E^{2}.\label{eq:hmm_P_alg_2}
\end{eqnarray}
 Thus with $y$ fixed, $\mathbf{Q}^{y}(\cdot)$ may be regarded as
an instance of the probability measure denoted $\mathbf{Q}(\cdot)$
in Section~\ref{sec:Perfect-sampling}. Also, let the maps $\Phi_{n}$,
$\Phi_{n,k}$ and the coalescence times $(T_{n})_{n\in\mathbb{T}}$
be defined exactly as in equations (\ref{eq:random_map_defn}), (\ref{eq:random_map_comp})
and (\ref{eq:coalescence_times_defn}) of Section~\ref{sec:Perfect-sampling}.
\begin{prop}
\label{prop:fixed_y_weak_ergo_hmm}Fix any $y\in\Omega^{Y}$. Any
(and all) of Proposition~\ref{prop:weak_ergo_charac} conditions
1.--4. hold for the Markov kernels $M^{y}$, if and only if
\begin{equation}
\mathbf{Q}^{y}(T_{n}>-\infty)=1,\quad\forall n\in\mathbb{T}.\label{eq:P_T_n_all_finite_hmm}
\end{equation}
Furthermore, if (\ref{eq:P_T_n_all_finite_hmm}) holds then for all
$n\in\mathbb{T}$, $\mathbf{Q}^{y}(\Phi_{T_{n},n}(x)\in\cdot)=\pi_{n}^{y}(\cdot)$
for all $x\in E$, where $\pi^{y}=(\pi_{n}^{y})_{n\in\mathbb{T}}$
is the unique sequence of absolute probabilities for $M^{y}$. If
(\ref{eq:P_T_n_all_finite_hmm}) holds for $y$ in a set of $\mathbf{P}\circ Y^{-1}$
probability $1$, then for all $n\in\mathbb{T}$ and $x\in E$, $\mathbf{Q}^{Y}(\Phi_{T_{n},n}(x)\in\cdot)=\mathbf{\mathbf{P}}(X_{n}\in\cdot|\mathcal{F}^{Y})$,
$\mathbf{P}$-a.s., and we call the coupling a.s. successful.\end{prop}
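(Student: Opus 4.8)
The plan is to read off the two fixed-$y$ assertions directly from the apparatus of Section~\ref{sec:Perfect-sampling}, and then to obtain the almost-sure statement by combining the uniqueness of absolute probabilities with the identification of $\mu^{Y}$ supplied by Lemma~\ref{lem:existence_of_pi^y}.

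For the fixed-$y$ claims I would note that, with $y$ held fixed, $M^{y}=(M_{n}^{y})_{n\in\mathbb{T}}$ is merely a sequence of Markov kernels on $E$, and by construction (\ref{eq:hmm_P_alg_1})--(\ref{eq:hmm_P_alg_2}) the measure $\mathbf{Q}^{y}$ is exactly an instance of the measure $\mathbf{Q}$ of Section~\ref{sec:Perfect-sampling} built from these kernels, with the same maps $\Phi_{n,k}$ and the same coalescence times $T_{n}$. Theorem~\ref{thm:perfect sampling-1} then applies verbatim: any of Proposition~\ref{prop:weak_ergo_charac}'s conditions 1--4 for $M^{y}$ is equivalent to $\mathbf{Q}^{y}(T_{n}>-\infty)=1$ for all $n$, and when this holds $\mathbf{Q}^{y}(\Phi_{T_{n},n}(x)\in\cdot)=\pi_{n}^{y}(\cdot)$ for every $x$, with $\pi^{y}$ the unique member of $\Pi_{M^{y}}$. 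This settles the first two sentences of the proposition with no extra work.

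For the almost-sure statement, suppose (\ref{eq:P_T_n_all_finite_hmm}) holds for $y$ in a set $N$ with $(\mathbf{P}\circ Y^{-1})(N)=1$. The crux is to identify the unique $\pi^{y}$ with $\mu^{y}$. Lemma~\ref{lem:existence_of_pi^y} provides, for each $n$ and $x$, the relation $\sum_{x'}\mu_{n-1}^{Y}(x')M_{n}^{Y}(x',x)=\mu_{n}^{Y}(x)$, $\mathbf{P}$-a.s.; since this depends on $\omega$ only through $Y(\omega)$ and there are only countably many pairs $(n,x)$, it descends to a statement holding for $(\mathbf{P}\circ Y^{-1})$-almost every $y$, i.e. $\mu^{y}\in\Pi_{M^{y}}$ on a full-measure set $N'$. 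On $N\cap N'$, success forces $\mathrm{card}(\Pi_{M^{y}})=1$ via Proposition~\ref{prop:weak_ergo_charac}, hence $\pi^{y}=\mu^{y}$, and the fixed-$y$ conclusion already obtained gives $\mathbf{Q}^{y}(\Phi_{T_{n},n}(x)\in\cdot)=\mu_{n}^{y}(\cdot)$ for all $n,x$ and all $y\in N\cap N'$.

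It then remains to transfer this pointwise-in-$y$ identity into the claimed $\mathbf{P}$-a.s. equality of $\mathcal{F}^{Y}$-measurable objects. The map $y\mapsto\mathbf{Q}^{y}(\Phi_{T_{n},n}(x)=z)$ is measurable because $\mathbf{Q}^{\cdot}(\cdot)$ is a probability kernel and $\{T_{n}>-\infty,\ \Phi_{T_{n},n}(x)=z\}\in\mathcal{F}^{\xi}$, so $\mathbf{Q}^{Y}(\Phi_{T_{n},n}(x)=z)$ is a well-defined $\mathcal{F}^{Y}$-measurable random variable. Pulling the identity back through $Y$, using $(\mathbf{P}\circ Y^{-1})(N\cap N')=1$ together with $\mu_{n}^{Y}(x)=\mathbf{P}(X_{n}=x\mid\mathcal{F}_{]-\infty,0]}^{Y})=\mathbf{P}(X_{n}=x\mid\mathcal{F}^{Y})$ (recall $\mathcal{F}^{Y}=\mathcal{F}_{]-\infty,0]}^{Y}$), yields $\mathbf{Q}^{Y}(\Phi_{T_{n},n}(x)\in\cdot)=\mathbf{P}(X_{n}\in\cdot\mid\mathcal{F}^{Y})$, $\mathbf{P}$-a.s., for each $n$ and $x$. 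I expect the main obstacle to be exactly this final bookkeeping: keeping the several $\mathbf{P}$- and $(\mathbf{P}\circ Y^{-1})$-null sets aligned across the countable index set and verifying the measurability of the $Y$-dependent coupling law, rather than any genuinely new probabilistic difficulty, since the substance is carried by Theorem~\ref{thm:perfect sampling-1} and Lemma~\ref{lem:existence_of_pi^y}.
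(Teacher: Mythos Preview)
Your proposal is correct and follows essentially the same approach as the paper's proof: both read off the fixed-$y$ claims directly from Theorem~\ref{thm:perfect sampling-1}, then use Lemma~\ref{lem:existence_of_pi^y} together with countability of $\mathbb{T}\times E$ to obtain a full-measure set on which $\mu^{Y(\omega)}\in\Pi_{M^{Y(\omega)}}$, and finally invoke uniqueness of absolute probabilities on the intersection with the success set to identify $\pi^{Y(\omega)}=\mu^{Y(\omega)}=\mathbf{P}(X_{n}\in\cdot\mid\mathcal{F}^{Y})(\omega)$. The only cosmetic difference is that you carry out the null-set bookkeeping on $\Omega^{Y}$ and then pull back through $Y$, whereas the paper works directly on $\Omega$ with $H\in\mathcal{F}^{Y}$ and $Y^{-1}(\tilde{H})$; your extra remark about measurability of $y\mapsto\mathbf{Q}^{y}(\Phi_{T_{n},n}(x)=z)$ is a harmless elaboration the paper leaves implicit.
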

\begin{proof}
For fixed $y\in\Omega^{Y}$, the claimed equivalence between (\ref{eq:P_T_n_all_finite_hmm})
and Proposition~\ref{prop:weak_ergo_charac} conditions 1.--4. holding
for the Markov kernels $M^{y}$ is an application of Theorem~\ref{thm:perfect sampling-1}.
So too is the equality $\mathbf{Q}^{y}(\Phi_{T_{n},n}(x)\in\cdot)=\pi_{n}^{y}(\cdot)$.
Since $E$ is a finite set and $\mathbb{T}$ is countable, it follows
from Lemma~\ref{lem:existence_of_pi^y} that there exists $H\in\mathcal{F}^{Y}$
with $\mathbf{P}(H)=1$ and such that for all $\omega\in H$, $n\in\mathbb{T}$
and $x\in E$, $\mathbf{\mathbf{P}}(X_{n}=x|\mathcal{F}^{Y})(\omega)=\mu_{n}^{Y(\omega)}(x)$
and $\sum_{x^{\prime}\in E}\mu_{n-1}^{Y(\omega)}(x^{\prime})M_{n}^{Y(\omega)}(x^{\prime},x)=\mu_{n}^{Y(\omega)}(x)$.
If, as hypothesized in the statement, there exists $\tilde{H}\in\mathcal{F}^{\tilde{Y}}$
such that for all $y\in\tilde{H}$, $\mathbf{Q}^{y}(T_{n}>-\infty)=1$
for all $n\in\mathbb{T}$, then for all $\omega\in H\cap Y^{-1}(\tilde{H})$,
$M{}^{Y(\omega)}$ admits a unique sequence of absolute probabilities,
and so $\pi_{n}^{Y(\omega)}(x)=\mu_{n}^{Y(\omega)}(x)=\mathbf{\mathbf{P}}(X_{n}=x|\mathcal{F}^{Y})(\omega)$.
\end{proof}
We present in Algorithm~\ref{alg:Perfect-sampling-for-hmm} some
steps of the sampling procedure, in order to emphasize the way that
the observations enter into recursive computations. For simplicity
of presentation, we consider the case of implementing the coupling
until $T_{0}=\sup\{n<0:\text{ image of \ensuremath{\Phi_{n,0}}is a singleton}\}$,
thus upon termination in a.s. finite time of the below algorithm,
the output value is a sample from $\mu_{0}^{y}$. The important point
here is that to run this algorithm one needs access to only the observations
$y_{0},\ldots,y_{T_{0}}$.

\begin{algorithm}[h]
\begin{raggedright}
for each $x\in E$, set
\[
\phi_{0}(x,y_{0})=\sum_{x^{\prime}\in E}M_{0}(x,x^{\prime})g_{0}(x^{\prime},y_{0}),
\]
\quad{}and by convention, $\phi_{1}(x,y_{1:0})=1$.
\par\end{raggedright}

\begin{raggedright}
set $\Phi_{0,0}=Id$
\par\end{raggedright}

\begin{raggedright}
set $n=0$
\par\end{raggedright}

\begin{raggedright}
while $card(\text{image of }\Phi_{n,0})>1$
\par\end{raggedright}

\begin{raggedright}
\quad{}\quad{}for each $x\in E$,
\par\end{raggedright}

\begin{raggedright}
\quad{}\quad{}\quad{}\quad{}for each $x^{\prime}\in E$, set
\[
M_{n}^{y}(x,x^{\prime})=\begin{cases}
\dfrac{M_{n}(x,x^{\prime})g_{n}(x^{\prime},y_{n})\phi_{n+1}(x^{\prime},y_{n+1:0})}{\phi_{n}(x,y_{n:0})}, & \;\phi_{n}(x,y_{n:0})>0,\\
M_{n}(x,x^{\prime}), & \;\phi_{n}(x,y_{n:0})=0,
\end{cases}
\]

\par\end{raggedright}

\begin{raggedright}
\quad{}\quad{}\quad{}\quad{}sample $\xi_{n}^{x}\sim M_{n}^{y}(x,\cdot)$
and set $\Phi_{n}(x)=\xi_{n}^{x}$
\par\end{raggedright}

\begin{raggedright}
\quad{}\quad{}set $\Phi_{n-1,0}=\Phi_{n,0}\circ\Phi_{n}$
\par\end{raggedright}

\begin{raggedright}
\quad{}\quad{}set $n=n-1$
\par\end{raggedright}

\begin{raggedright}
\quad{}\quad{}for each $x\in E$, set
\[
\phi_{n}(x,y_{n:0})=\sum_{x^{\prime}\in E}M_{n}(x,x^{\prime})g_{n}(x^{\prime},y_{n})\phi_{n+1}(x^{\prime},y_{n+1:0}).
\]

\par\end{raggedright}

\begin{raggedright}
return $\Phi_{n,0}(x)$, for any $x\in E$.
\par\end{raggedright}

\protect\caption{\label{alg:Perfect-sampling-for-hmm}Perfect sampling for the hidden
Markov model}

\end{algorithm}

\subsection{Successful coupling and conditional ergodicity}

With a little further technical work, we can relate the successful
coupling in the sense of (\ref{eq:P_T_n_all_finite_hmm}) to the conditional
ergodicity properties of the HMM. Our next step is to perform some
careful accounting of certain $\sigma$-algebras to help us transfer
results backwards and forwards between the measurable space $(\Omega,\mathcal{F})$
underlying the HMM and the ``marginal'' space $(\Omega^{X},\mathcal{F}^{\tilde{X}})$;
the attentive reader will have noticed that under the definitions
of Section~\ref{sub:The-model-hmm}, $\mathcal{F}_{I}^{X}$ consists
of subsets of $\Omega$, where as $\mathcal{F}_{I}^{\tilde{X}}$ consists
of subsets $\Omega^{X}$. On the other hand, $\mathcal{F}_{I}^{\tilde{X}}$
coincides with the object in Section~\ref{sec:Tail-Triviality-and}
denoted there by $\mathcal{F}_{I}^{X}$, and in terms of which Theorem~\ref{thm:tail}
is phrased. The resolution of this issue is provided by the following
technical lemma, whose proof is given in the appendix.
\begin{lem}
\label{lem:F_X_F_X_tilde}With the definitions of Section~\ref{sub:The-model-hmm}
in force,
\[
\mathcal{F}_{I}^{X}=\{A\times\Omega^{Y};A\in\mathcal{F}_{I}^{\tilde{X}}\}\quad\forall I\subset\mathbb{T},
\]
and,
\begin{equation}
\bigcap_{n\in\mathbb{T}}\mathcal{F}_{]-n,0]}^{X}=\left\{ A\times\Omega^{Y};A\in\bigcap_{n\in\mathbb{T}}\mathcal{F}_{]-n,0]}^{\tilde{X}}\right\} .\label{eq:tail_x_tail_x_tilde}
\end{equation}

\end{lem}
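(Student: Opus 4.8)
The plan is to reduce both identities to the single structural observation that $\zeta:(x,y)\mapsto x$ is the coordinate projection onto $\Omega^{X}$ and that $X_{n}=\tilde X_{n}\circ\zeta$. From this it follows that each $\mathcal{F}^{X}_{I}$ is nothing but the $\zeta$-pullback of $\mathcal{F}^{\tilde X}_{I}$, and the two displays then express, respectively, how this pullback behaves on a single $\sigma$-algebra and on a countable intersection of them.

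First I would record the standard measure-theoretic fact that for any map $f$ and any family $(g_{i})_{i\in I}$ of measurable maps on its codomain, $\sigma(g_{i}\circ f;i\in I)=f^{-1}\!\left(\sigma(g_{i};i\in I)\right)$. One inclusion is immediate because $f^{-1}$ of a $\sigma$-algebra is again a $\sigma$-algebra and it contains each generator $(g_{i}\circ f)^{-1}(\cdot)=f^{-1}(g_{i}^{-1}(\cdot))$; the reverse is the usual good-sets argument, taking $\mathcal{C}:=\{A\in\sigma(g_{i};i\in I):f^{-1}(A)\in\sigma(g_{i}\circ f;i\in I)\}$, which is a $\sigma$-algebra containing all $g_{i}^{-1}(\cdot)$ and hence equals $\sigma(g_{i};i\in I)$. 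Applying this with $f=\zeta$ and $g_{n}=\tilde X_{n}$ for $n\in I$ gives $\mathcal{F}^{X}_{I}=\sigma(X_{n};n\in I)=\zeta^{-1}(\mathcal{F}^{\tilde X}_{I})$, and since $\zeta^{-1}(A)=A\times\Omega^{Y}$ for every $A\subset\Omega^{X}$, this is exactly the first display.

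For the second display I would show that $\zeta^{-1}$ commutes with the relevant countable intersection, i.e.\ that $\bigcap_{n}\zeta^{-1}(\mathcal{G}_{n})=\zeta^{-1}\!\left(\bigcap_{n}\mathcal{G}_{n}\right)$ for the family $\mathcal{G}_{n}$ of $\sigma$-algebras $\mathcal{F}^{\tilde X}_{I_{n}}$ appearing there. The inclusion $\supseteq$ is trivial: any $A$ lying in every $\mathcal{G}_{n}$ has $\zeta^{-1}(A)$ lying in every $\zeta^{-1}(\mathcal{G}_{n})$. For the reverse inclusion I would use the concrete form of $\zeta$: if $B\in\bigcap_{n}\zeta^{-1}(\mathcal{G}_{n})$, then for each $n$ there is $A_{n}\in\mathcal{G}_{n}$ with $B=\zeta^{-1}(A_{n})=A_{n}\times\Omega^{Y}$; because $\Omega^{Y}\neq\emptyset$, the assignment $A\mapsto A\times\Omega^{Y}$ is injective, so all the $A_{n}$ coincide with one set $A$, which therefore lies in $\bigcap_{n}\mathcal{G}_{n}$ and satisfies $B=\zeta^{-1}(A)$. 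Combining this with the first display, which identifies each $\mathcal{F}^{X}_{I_{n}}$ with $\zeta^{-1}(\mathcal{G}_{n})$, yields the second display.

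The one genuinely non-formal point, which I would flag as the crux, is this reverse inclusion for the intersection: it fails for general maps and relies precisely on $\zeta$ being surjective onto $\Omega^{X}$ (equivalently, on $\Omega^{Y}$ being non-empty), which is what makes $A\mapsto A\times\Omega^{Y}$ injective and lets me pass from a family of representatives $A_{n}$ to a single common representative $A$. Everything else is routine bookkeeping with pullbacks of $\sigma$-algebras.
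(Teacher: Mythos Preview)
Your proof is correct and, if anything, cleaner than the paper's. The underlying idea is the same---both arguments identify $\mathcal{F}_I^X$ with $\{A\times\Omega^Y:A\in\mathcal{F}_I^{\tilde X}\}$ and then pass to the intersection---but the routes differ in execution.

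For the first identity, you invoke the standard pullback fact $\sigma(g_i\circ f;i\in I)=f^{-1}\bigl(\sigma(g_i;i\in I)\bigr)$ with $f=\zeta$, which settles the matter in one line. The paper instead verifies by hand that $\mathcal{C}_I^X:=\{A\times\Omega^Y:A\in\mathcal{F}_I^{\tilde X}\}$ is a $\sigma$-algebra, that each $X_n$ is $\mathcal{C}_I^X$-measurable, and then argues minimality by showing that removing any single set $B\times\Omega^Y$ from $\mathcal{C}_I^X$ yields a collection that either fails to be a $\sigma$-algebra or omits some $X_n^{-1}(A)$. This last step is transferred back from the minimality of $\mathcal{F}_I^{\tilde X}$ via the bijection $A\leftrightarrow A\times\Omega^Y$. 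Your approach is shorter and more conceptual; the paper's is more self-contained but considerably longer and relies on a somewhat unusual ``remove one element'' formulation of minimality.

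For the second identity, you prove carefully that $\zeta^{-1}$ commutes with the countable intersection, correctly isolating the one non-formal point: injectivity of $A\mapsto A\times\Omega^Y$ (equivalently $\Omega^Y\neq\emptyset$) is what lets you collapse the representatives $A_n$ into a single $A$. The paper simply asserts that the tail identity ``follows directly'' from the first display, leaving this step implicit. Your treatment is more explicit and identifies precisely where the nontrivial inclusion lives.
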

Lemma~\ref{lem:F_X_F_X_tilde} allows us to set up correspondence
between probabilities on $\mathcal{F}^{\tilde{X}}$ and $\mathcal{F}^{X}$,
and in particular we have:
\begin{lem}
\label{lem:P_bold_P} There exists a probability kernel $P^{\cdot}(\cdot):\Omega^{Y}\times\mathcal{F}^{\tilde{X}}\rightarrow[0,1]$
and a set $\tilde{H}\in\mathcal{F}^{\tilde{Y}}$ of $\mathbf{P}\circ Y^{-1}$
probability $1$, such that for all $y\in\tilde{H}$
\[
P^{y}(\{\tilde{X}_{n}=x_{n},\ldots,\tilde{X}_{0}=x_{0}\})=\mu_{n}^{y}(x_{n})\prod_{k=n+1}^{0}M_{k}^{y}(x_{k-1},x_{k}).
\]
The function $\mathbf{P}^{\mathcal{F}^{Y}}:\Omega\times\mathcal{F}^{X}\rightarrow[0,1]$
defined by
\[
\mathbf{P}^{\mathcal{F}^{Y}}(\omega,A\times\Omega^{Y}):=P^{Y(\omega)}(A),\quad A\in\mathcal{F}^{\tilde{X}},
\]
 is a probability kernel, and for each $A\in\mathcal{F}^{\tilde{X}}$,
\begin{equation}
\mathbf{P}^{\mathcal{F}^{Y}}(\omega,A\times\Omega^{Y})=\mathbf{P}(A\times\Omega^{Y}|\mathcal{F}^{Y})(\omega),\quad\text{ for }\mathbf{P}\text{-almost all }\mbox{\ensuremath{\omega\in\Omega}}.\label{eq:P_f_is_a_version}
\end{equation}

\end{lem}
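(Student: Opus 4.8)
The plan is to build the kernel $P^{\cdot}(\cdot)$ fibrewise from finite-dimensional distributions, to identify $\mathbf{P}^{\mathcal{F}^{Y}}$ with it through the isomorphism of Lemma~\ref{lem:F_X_F_X_tilde}, and finally to recognise it as a regular version of $\mathbf{P}(\cdot\,|\mathcal{F}^{Y})$. First I would invoke Lemma~\ref{lem:existence_of_pi^y}: since $E$ is finite and $\mathbb{T}$ countable, the absolute-probability relation $\sum_{x^{\prime}}\mu_{n-1}^{Y}(x^{\prime})M_{n}^{Y}(x^{\prime},x)=\mu_{n}^{Y}(x)$ holds simultaneously for all $n$ and $x$ off a single $\mathbf{P}$-null set; projecting this set through $Y$ yields $\tilde{H}\in\mathcal{F}^{\tilde{Y}}$ with $\mathbf{P}\circ Y^{-1}(\tilde{H})=1$ on which all these identities hold. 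For $y\in\tilde{H}$ I would set $P^{y,(n)}(\{x_n,\ldots,x_0\}):=\mu_n^y(x_n)\prod_{k=n+1}^0 M_k^y(x_{k-1},x_k)$; summing over the earliest coordinate $x_n$ and using the absolute-probability relation shows that the marginal of $P^{y,(n)}$ on $(x_{n+1},\ldots,x_0)$ is $P^{y,(n+1)}$, so the family is projective and Kolmogorov's extension theorem furnishes a probability measure $P^y$ on $\mathcal{F}^{\tilde{X}}$ (for $y\notin\tilde{H}$ set $P^y$ to any fixed measure). Measurability of $y\mapsto P^y(A)$ is immediate on cylinders, because $\mu_n^{\cdot}$ is a kernel and $M_n^{y}$ is measurable in $y$ by its defining formula (\ref{eq:M^y_defn}) together with the recursion (\ref{eq:phi_defn}); a monotone-class argument then promotes this to all $A\in\mathcal{F}^{\tilde{X}}$, so $P^{\cdot}(\cdot)$ is a kernel.

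Next, Lemma~\ref{lem:F_X_F_X_tilde} identifies $\mathcal{F}^{X}$ with $\mathcal{F}^{\tilde{X}}$ via the bijection $A\mapsto A\times\Omega^{Y}$, which is a $\sigma$-algebra isomorphism. Setting $\mathbf{P}^{\mathcal{F}^{Y}}(\omega,A\times\Omega^{Y}):=P^{Y(\omega)}(A)$ therefore defines a map on all of $\mathcal{F}^{X}$; for fixed $\omega$ it is a probability measure because $P^{Y(\omega)}$ is one, and for fixed $A\times\Omega^{Y}$ it is $\mathcal{F}^{Y}$-measurable as the composition of the measurable $Y$ with the kernel measurability just established. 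Hence $\mathbf{P}^{\mathcal{F}^{Y}}$ is a probability kernel.

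The substantive step is (\ref{eq:P_f_is_a_version}). I would first prove the a.s.\ factorisation $\mathbf{P}(X_n=x_n,\ldots,X_0=x_0\mid\mathcal{F}^{Y})=\mu_n^{Y}(x_n)\prod_{k=n+1}^0 M_k^{Y}(x_{k-1},x_k)$ for each fixed cylinder by peeling coordinates from the right. Conditioning on $\mathcal{F}^{Y}\vee\mathcal{F}_{]-\infty,-1]}^{X}$ and combining (\ref{eq:hmm_cond_ind_1}), (\ref{eq:hmm_cond_ind_2}) and Lemma~\ref{lem:M^Y_is_a_version} gives $\mathbf{E}[\mathbb{I}\{X_0=x_0\}\mid\mathcal{F}^{Y}\vee\mathcal{F}_{]-\infty,-1]}^{X}]=M_0^{Y}(X_{-1},x_0)$; on $\{X_{-1}=x_{-1}\}$ this equals the $\mathcal{F}^{Y}$-measurable quantity $M_0^{Y}(x_{-1},x_0)$, which the tower property lets me factor out, reducing the horizon to $[n,-1]$. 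Iterating peels off each $M_k^{Y}(x_{k-1},x_k)$ and leaves $\mathbf{E}[\mathbb{I}\{X_n=x_n\}\mid\mathcal{F}^{Y}]=\mu_n^{Y}(x_n)$ by Lemma~\ref{lem:existence_of_pi^y}. Since there are only countably many cylinders, all these identities hold off a common null set, on which they agree with $\mathbf{P}^{\mathcal{F}^{Y}}(\cdot,\{x_n,\ldots,x_0\}\times\Omega^{Y})$. Finally, for fixed $A$ the collection of $A\in\mathcal{F}^{\tilde{X}}$ for which $\int_B \mathbf{P}^{\mathcal{F}^{Y}}(\omega,A\times\Omega^{Y})\,\mathbf{P}(d\omega)=\mathbf{P}((A\times\Omega^{Y})\cap B)$ for every $B\in\mathcal{F}^{Y}$ is a $\lambda$-system containing the generating $\pi$-system of cylinders, hence all of $\mathcal{F}^{\tilde{X}}$; as $\mathbf{P}^{\mathcal{F}^{Y}}(\cdot,A\times\Omega^{Y})$ is $\mathcal{F}^{Y}$-measurable, this is exactly (\ref{eq:P_f_is_a_version}).

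I expect the main obstacle to be the peeling argument: one must carefully coordinate the two conditional-independence identities (\ref{eq:hmm_cond_ind_1})--(\ref{eq:hmm_cond_ind_2}) with Lemma~\ref{lem:M^Y_is_a_version} so that each one-step factor emerges as a function of $Y$ alone on the relevant coordinate event, and one must ensure that the countably many a.s.\ cylinder factorisations together with the consistency relations all hold off a \emph{single} $\mathbf{P}$-null set before the monotone-class extension can legitimately be applied.
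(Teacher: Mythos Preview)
Your proposal is correct and follows the paper's proof quite closely for the construction of $P^{y}$ and the kernel property of $\mathbf{P}^{\mathcal{F}^{Y}}$. The one place where you diverge is in the verification of (\ref{eq:P_f_is_a_version}). You argue directly: establish the factorisation $\mathbf{P}(X_n=x_n,\ldots,X_0=x_0\mid\mathcal{F}^{Y})=\mu_n^{Y}(x_n)\prod_k M_k^{Y}(x_{k-1},x_k)$ on each cylinder by iterated conditioning, collect null sets, then push through a $\pi$--$\lambda$ argument on the defining integral identity for conditional expectation. The paper instead takes an indirect route: it defines a new probability $\tilde{\mathbf{P}}(A):=\int P^{y}(dx)(\mathbf{P}\circ Y^{-1})(dy)$ on $(\Omega,\mathcal{F})$, observes that $\tilde{\mathbf{P}}(\cdot\mid\mathcal{F}^{Y})=\mathbf{P}^{\mathcal{F}^{Y}}$ by construction, and then shows $\tilde{\mathbf{P}}=\mathbf{P}$ by checking equality on each $\mathcal{F}_{[n,0]}$ via exactly the same tower-property peeling you use. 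The advantage of the paper's packaging is that the null-set bookkeeping you flag as the main obstacle disappears: one compares two honest measures rather than two families of conditional expectations, so no aggregation of exceptional sets is needed before the extension step. Your route is more hands-on but equally valid, and your $\lambda$-system argument on $\int_B \mathbf{P}^{\mathcal{F}^{Y}}(\omega,A\times\Omega^{Y})\,\mathbf{P}(d\omega)=\mathbf{P}((A\times\Omega^{Y})\cap B)$ is the right way to close it.
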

The proof is in the appendix. For any $y\in\Omega^{Y}$, we denote
by $\Pi_{M^{y}}$ the set of all sequences of absolute probabilities
for $M^{y}=(M_{n}^{y})_{n\in\mathbb{T}}$. The set $\Pi_{M^{y}}$
is non-empty by Fact~\ref{fact:there-always-exists}. For any $y\in\Omega^{Y}$
we shall write generically $\pi^{y}$ for a member of $\Pi_{M^{y}}$
(we do not claim measurable dependence of $\pi_{n}^{y}$ on $y$ except
at least in the case of $\pi_{n}^{y}=\mu_{n}^{y}$ with the latter
as in Lemma~\ref{lem:existence_of_pi^y}), and, by arguments only
superficially different (we omit the details) to those used in the
proof of Lemma~\ref{lem:P_bold_P}, for any such $\pi^{y}\in\Pi_{M^{y}}$
there exists a probability measure $P_{\pi^{y}}$ on $\mathcal{F}^{\tilde{X}}$
such that
\begin{equation}
P_{\pi^{y}}(\{\tilde{X}_{n}=x_{n},\ldots,\tilde{X}_{0}=x_{0}\})=\pi_{n}^{y}(x_{n})\prod_{k=n+1}^{0}M_{k}^{y}(x_{k-1},x_{k}),\label{eq:P_sub_pi}
\end{equation}
and
\[
\mathbf{P}_{\pi^{y}}(A\times\Omega^{Y}):=P_{\pi^{y}}(A),\quad A\in\mathcal{F}^{\tilde{X}},
\]
 defines a probability measure on $\mathcal{F}^{X}$. We now have
the technical and notational devices to state and prove the following
theorem, which characterizes almost sure success of the coupling.
\begin{thm}
\label{thm:cond_ergo_perf_sampling}The following are equivalent.\vspace{0.15cm}

\noindent 1. $\mathbf{Q}^{Y(\omega)}(\bigcap_{n\in\mathbb{T}}\{T_{n}>-\infty\})=1$
for $\mathbf{P}$-almost all $\omega$,

\noindent 2. There exists a set $H\in\mathcal{F}$ such that $\mathbf{P}(H)=1$
and
\[
\mathbf{P}^{\mathcal{F}^{Y}}(\omega,A)=\mathbf{P}^{\mathcal{F}^{Y}}(\omega,A)^{2}=\mathbf{P}_{\pi^{Y(\omega)}}(A),
\]
 \quad{}\ for all $\omega\in H$, $A\in\bigcap_{n\in\mathbb{T}}\mathcal{F}_{]-\infty,n]}^{X}$
and $\pi^{Y(\omega)}\in\Pi_{M^{Y(\omega)}}$.\end{thm}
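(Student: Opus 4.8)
The plan is to reduce the theorem to a fiberwise application, for $\mathbf{P}$-almost every $\omega$, of the results already proved for a fixed family of kernels, now taken to be $M^{Y(\omega)}$. First I would note that since $\mathbb{T}$ is countable, $\mathbf{Q}^{y}(\bigcap_{n}\{T_{n}>-\infty\})=1$ holds if and only if $\mathbf{Q}^{y}(T_{n}>-\infty)=1$ for every $n\in\mathbb{T}$; thus condition~1 is exactly the assertion that the coupling attached to $M^{Y(\omega)}$ is successful for $\mathbf{P}$-almost all $\omega$, and the map $\omega\mapsto\mathbf{Q}^{Y(\omega)}(\bigcap_{n}\{T_{n}>-\infty\})$ is measurable because $\mathbf{Q}^{\cdot}(\cdot)$ is a probability kernel. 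By Proposition~\ref{prop:fixed_y_weak_ergo_hmm}, for each fixed $y$ this success is equivalent to $\mathrm{card}(\Pi_{M^{y}})=1$, and by Theorem~\ref{thm:tail} applied to the kernels $M^{y}$ the latter is equivalent to the tail-triviality statement
\[
P_{\pi^{y}}(A_{0})=P_{\pi^{y}}(A_{0})^{2}=P_{\tilde{\pi}^{y}}(A_{0}),\qquad\forall\,(\pi^{y},\tilde{\pi}^{y},A_{0})\in\Pi_{M^{y}}\times\Pi_{M^{y}}\times\bigcap_{n}\mathcal{F}^{\tilde{X}}_{]-\infty,n]}.
\]
The remaining work is to match this fiberwise condition with condition~2 of the theorem, using Lemma~\ref{lem:F_X_F_X_tilde} to identify a tail event $A\in\bigcap_{n}\mathcal{F}^{X}_{]-\infty,n]}$ with its section $A_{0}\in\bigcap_{n}\mathcal{F}^{\tilde{X}}_{]-\infty,n]}$ through $A=A_{0}\times\Omega^{Y}$, and Lemmas~\ref{lem:existence_of_pi^y} and~\ref{lem:P_bold_P} to identify $\mathbf{P}^{\mathcal{F}^{Y}}(\omega,\cdot)$ with $P^{Y(\omega)}=P_{\mu^{Y(\omega)}}$, where $\mu^{y}\in\Pi_{M^{y}}$ is the filtering sequence.

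For the implication 1$\Rightarrow$2, I would intersect the $\mathbf{P}$-full-measure success set with the $\mathbf{P}$-full-measure sets supplied by Lemmas~\ref{lem:existence_of_pi^y} and~\ref{lem:P_bold_P}, and call the result $H$. For $\omega\in H$, success forces $\Pi_{M^{Y(\omega)}}$ to be the singleton $\{\mu^{Y(\omega)}\}$, so the universally quantified $\pi^{Y(\omega)}$ in condition~2 can only be $\mu^{Y(\omega)}$, and the displayed equalities collapse to tail-triviality of $P_{\mu^{Y(\omega)}}=\mathbf{P}^{\mathcal{F}^{Y}}(\omega,\cdot)$. This is precisely the content of Theorem~\ref{thm:tail} (equivalently of Proposition~\ref{prop:non_triv_implies_card>1} in contrapositive form) for the kernels $M^{Y(\omega)}$.

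For 2$\Rightarrow$1, I would again work on $H$ intersected with the full-measure sets of the translation lemmas. There condition~2 reads $P_{\mu^{y}}(A_{0})=P_{\mu^{y}}(A_{0})^{2}=P_{\pi^{y}}(A_{0})$ for every tail $A_{0}$ and every $\pi^{y}\in\Pi_{M^{y}}$. The key point is that this ostensibly \emph{asymmetric} condition, in which every comparison is pinned to the distinguished filter $\mu^{y}$, already yields the full symmetric hypothesis of Theorem~\ref{thm:tail}: since $\mu^{y}\in\Pi_{M^{y}}$ by Lemma~\ref{lem:existence_of_pi^y}, for any two members $\pi^{y},\tilde{\pi}^{y}$ one has $P_{\pi^{y}}(A_{0})=P_{\mu^{y}}(A_{0})=P_{\tilde{\pi}^{y}}(A_{0})$ and $P_{\pi^{y}}(A_{0})=P_{\mu^{y}}(A_{0})=P_{\mu^{y}}(A_{0})^{2}=P_{\pi^{y}}(A_{0})^{2}$. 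Theorem~\ref{thm:tail} then gives $\mathrm{card}(\Pi_{M^{y}})=1$ and Proposition~\ref{prop:fixed_y_weak_ergo_hmm} gives success, for every $\omega$ in a set of full $\mathbf{P}$-measure, which is condition~1.

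I expect the main obstacle to be the reconciliation of the two different modes of quantification rather than any new probabilistic idea: condition~2 is pointwise in $\omega$ and quantifies over \emph{all} $\pi^{Y(\omega)}\in\Pi_{M^{Y(\omega)}}$ with no measurability claimed for $\pi^{y}$ in $y$, whereas condition~1 is an almost-sure statement, so I must assemble a single $\mathbf{P}$-null exceptional set off which every fiberwise ingredient (the filter identity of Lemma~\ref{lem:existence_of_pi^y}, the kernel identification of Lemma~\ref{lem:P_bold_P}, and the product decomposition of Lemma~\ref{lem:F_X_F_X_tilde}) is simultaneously valid. The genuinely content-bearing step is the asymmetry observation in 2$\Rightarrow$1, which shows that restricting the tail-triviality comparison to the single sequence $\mu^{y}$ loses nothing, precisely because $\mu^{y}$ is itself an absolute-probability sequence for $M^{y}$.
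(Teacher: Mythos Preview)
Your proposal is correct and follows essentially the same route as the paper: reduce to a fiberwise application of Theorem~\ref{thm:tail} via Proposition~\ref{prop:fixed_y_weak_ergo_hmm} (and Proposition~\ref{prop:weak_ergo_charac}), using Lemmata~\ref{lem:F_X_F_X_tilde} and~\ref{lem:P_bold_P} to translate between $(\Omega^{X},\mathcal{F}^{\tilde{X}})$ and $(\Omega,\mathcal{F}^{X})$. Your explicit handling of the asymmetry in 2$\Rightarrow$1---deriving the full symmetric hypothesis of Theorem~\ref{thm:tail} from the $\mu^{y}$-pinned version by using $\mu^{y}\in\Pi_{M^{y}}$---is more careful than the paper, which simply writes ``apply this chain of reasoning in reverse.''
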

\begin{proof}
When 1. holds, there exists $H\in\mathcal{F}$ with $\mathbf{P}(H)=1$
such that for all $\omega\in H$ the following hold: for all $n\in\mathbb{T}$,
$\mathbf{Q}^{Y(\omega)}(T_{n}>-\infty)=1$; then via Proposition~\ref{prop:fixed_y_weak_ergo_hmm}
and Proposition~\ref{prop:weak_ergo_charac}, $\text{card}(\Pi_{M^{Y(\omega)}})=1$;
then by an application of Theorem~\ref{thm:tail} with the $\mathbf{P}_{\pi}$
appearing there taken to be $P^{Y(\omega)}(\cdot)$, and Lemma~\ref{lem:F_X_F_X_tilde},
we have $P^{Y(\omega)}(\tilde{A})=P^{Y(\omega)}(\tilde{A})^{2}=P_{\pi^{Y(\omega)}}(\tilde{A})$
for all $\tilde{A}\in\bigcap_{n\in\mathbb{T}}\mathcal{F}_{]-\infty,n]}^{\tilde{X}}$
and $\pi^{Y(\omega)}\in\Pi_{M^{Y(\omega)}}$. Lemmata~\ref{lem:F_X_F_X_tilde}
and~\ref{lem:P_bold_P} then give $\mathbf{P}^{\mathcal{F}^{Y}}(\omega,A)=\mathbf{P}^{\mathcal{F}^{Y}}(\omega,A)^{2}=\mathbf{P}_{\pi^{Y(\omega)}}(A)$
for all $A\in\bigcap_{n\in\mathbb{T}}\mathcal{F}_{]-\infty,n]}^{X}$,
which establishes 2.

If 2. holds, we apply this chain of reasoning in reverse to establish
1. The details are omitted in order to avoid repetition.
\end{proof}

\section{Discussion\label{sec:Discussion}}

Throughout Section~\ref{sec:Discussion} the definitions and constructions
of Section~\ref{sec:HMM} are in force. In particular it is timely
to recall that the law of the HMM, $\mathbf{P}$, has the defining
ingredients:
\begin{itemize}
\item $M=(M_{n})_{n\in\mathbb{T}}$ a sequence of Markov kernels on $E$
\item $\pi=(\pi)_{n\in\mathbb{T}}\in\Pi_{M}$ a sequence of absolute probabilities
for $M$
\item $G=(G_{n})_{n\in\mathbb{T}}$ a sequence of probability kernels, each
acting from $E$ to $F$ and such that for each $n$, $G_{n}(x,dy)=g_{n}(x,y)\psi(dy)$
\end{itemize}
We shall consider various combinations of the following assumptions.
\begin{assumption}
\label{assu:signal_ergo}Under $\mathbf{P}$ the signal process is
ergodic, in that $\mathbf{P}(A)=\mathbf{P}(A)^{2}$ for all $A\in\bigcap_{n\in\mathbb{T}}\mathcal{F}_{]-\infty,n]}^{X}$
\end{assumption}

\begin{assumption}
\label{assu:non_degeneracy}The observations are non-degenerate, in
that $g_{n}(x,y)>0$ for all $n\in\mathbb{T}$, $x\in E$ and $y\in F$.
\end{assumption}

\begin{assumption}
\label{assu:time_homog}The signal transitions, absolute probabilities
and observation kernels do not depend on time, in that $M_{n}=M_{0}$,
$\pi_{n}=\pi_{0}$ and $G_{n}=G_{0}$ for all $n\in\mathbb{T}$.
\end{assumption}
\noindent Let us briefly comment on these assumptions.

Assumption~\ref{assu:signal_ergo} does not imply that the backward
products of $M$ are weakly ergodic: the latter is, by Proposition~\ref{prop:weak_ergo_charac}
and Theorem~\ref{thm:tail}, equivalent to the \emph{simultaneous}
tail triviality of $X$ under the probability measures over paths
in $\Omega^{X}$ derived from \emph{all} members of $\Pi_{M}$, whereas
Assumption~\ref{assu:signal_ergo} involves only the particular $\pi\in\Pi_{M}$
used to construct $\mathbf{P}$.

Assumption~\ref{assu:non_degeneracy} is the same type of assumption
employed by \citet{van2009stability} and ensures that information
from the observations cannot rule out with certainty any particular
hidden state. We shall use several times the fact that when this assumption
holds, $\phi_{n}(x,y_{n:0})>0$ for all $n$, $x$ and $y_{n},\ldots,y_{0}$,
which is established by a simple induction.

Assumption~\ref{assu:time_homog} sacrifices some of the generality
of the HMM, but serves to simplify our discussions. Note that when
this assumption holds the signal process $X$ is stationary under
$\mathbf{P}$ with $\mathbf{P}(X_{n}\in\cdot)=\pi_{0}(\cdot)$ for
all $n\in\mathbb{T}$.

\subsection{The connection to filter stability\label{sub:The-connection-to}}

Throughout Section~\ref{sub:The-connection-to} we adopt Assumption~\ref{assu:time_homog}.
Let $(X^{+},Y^{+})$ be the time-reversal of $(X,Y)$, i.e. $X_{n}^{+}=X_{-n}$,
$Y_{n}^{+}=Y_{-n}$, $n\in\mathbb{N}$. For some probability distribution
$\bar{\pi}_{0}$, not necessarily an invariant distribution for $M_{0}$,
but such that $\bar{\pi}_{0}\ll\pi_{0}$, let $\mathbf{\bar{P}}$
be the probability measure on $(\Omega,\mathcal{F})$ under which
$(X^{+},Y^{+})$ has the same transition probabilities as under $\mathbf{P}$
but $X_{0}^{+}\sim\bar{\pi}_{0}$, so, since $X_{0}^{+}=X_{0}$,

\[
\frac{d\mathbf{\bar{P}}}{d\mathbf{P}}(X,Y)=\frac{d\bar{\pi}_{0}}{d\pi_{0}}(X_{0}),\quad\mathbf{P}-a.s.
\]
For $n\in\mathbb{T}$ let $\rho_{n}^{Y}$ and $\bar{\rho}_{n}^{Y}$
be respectively regular conditional probabilities of the form $\mathbf{P}(X_{n}\in\cdot|\mathcal{F}_{[n,0]}^{Y})$
and $\bar{\mathbf{P}}(X_{n}\in\cdot|\mathcal{F}_{[n,0]}^{Y})$, so
$\rho_{n}^{Y}$ (resp. $\bar{\rho}_{n}^{Y}$) is a filtering distribution
under $\mathbf{P}$ (resp. $\bar{\mathbf{P}}$) for the time-reversed
HMM $(X^{+},Y^{+})$. Amongst various notions of forgetting associated
with HMM's, \emph{asymptotic filter stability} (in mean) is the phenomenon:
\begin{equation}
\lim_{n\rightarrow-\infty}\bar{\mathbf{E}}[\|\rho_{n}^{Y}-\bar{\rho}_{n}^{Y}\|]=0.\label{eq:fitler_stability}
\end{equation}
As discussed in \citep{van2009stability,intrinsic_filter_chapter}
and references therein, it is now well known that ergodicity of the
signal as per Assumption~\ref{assu:signal_ergo} is, alone, not enough
to establish filter stability (in various senses), see Section~\ref{sub:counter_examples_deg_obs}
for a counter-example. However (\ref{eq:fitler_stability}) does hold
if
\begin{equation}
\bigcap_{n\in\mathbb{T}}\mathcal{F}_{]-\infty,0]}^{Y}\vee\mathcal{F}_{]-\infty,n]}^{X}=\mathcal{F}_{]-\infty,0]}^{Y},\quad\mathbf{P}-\text{a.s.},\label{eq:filter_stab_sigma_1}
\end{equation}
see \citep{intrinsic_filter_chapter} for a proof, and using a result
of \citet{weizsacker1983exchanging}, when Assumption~\ref{assu:signal_ergo}
holds a necessary and sufficient condition for (\ref{eq:filter_stab_sigma_1})
is:
\begin{equation}
\bigcap_{n\in\mathbb{T}}\mathcal{F}_{]-\infty,n]}^{X}\;\text{ is }\;\mathbf{P}^{\mathcal{F}^{Y}}(\omega,\cdot)\text{-a.s. trivial, for }\mathbf{P}\text{-a.e. }\omega.\label{eq:cond_ergodicity}
\end{equation}
Here, $\mathbf{P}^{\mathcal{F}^{Y}}$is the object defined in Lemma~\ref{lem:P_bold_P}
and appearing in Theorem~\ref{thm:cond_ergo_perf_sampling}: it is
a version of $\mathbf{P}(\cdot|\mathcal{F}^{Y})$ as a regular conditional
distribution over $\mathcal{F}^{X}$ given $\mathcal{F}^{Y}$. Thus,
we see that if the coupling for the HMM is a.s. successful, in the
sense that condition 1. of Theorem~\ref{thm:cond_ergo_perf_sampling}
holds, then condition 2. of that Theorem holds, implying (\ref{eq:cond_ergodicity}),
and therefore (\ref{eq:fitler_stability}). Thus asymptotic filter
stability for the reversed HMM is a necessary condition for a.s. successful
coupling. However as we shall discuss in Section~\ref{sub:counter_examples_red},
the condition (\ref{eq:cond_ergodicity}) is in general weaker than
the \emph{simultaneous} tail triviality in condition 2. of Theorem~\ref{thm:cond_ergo_perf_sampling}.

\subsection{Counter examples to successful coupling}

\subsubsection{Degenerate observations\label{sub:counter_examples_deg_obs}}

The purpose of this section is to show how consideration of unicity
of absolute probabilities and a.s. success of the coupling bring a
fresh perspective on a well known counter-example to filter stability
due to \citet{baxendale2004asymptotic}. Out starting point is to
observe that for any $y\in\tilde{H}$ where $\tilde{H}$ is as in
Lemma~\ref{lem:P_bold_P}, the tower property of conditional expectation
and the Markov property of $\tilde{X}$ under $P^{y}$ give
\begin{eqnarray}
 &  & P^{y}\left(\left.\tilde{X}_{k}=x\right|\bigcap_{n\in\mathbb{T}}\mathcal{F}_{]-\infty,n]}^{\tilde{X}}\right)\nonumber \\
 &  & =E^{y}\left[\left.P^{y}\left(\left.\tilde{X}_{k}=x\right|\sigma(\tilde{X}_{k-1})\vee\bigcap_{n\in\mathbb{T}}\mathcal{F}_{]-\infty,n]}^{\tilde{X}}\right)\right|\bigcap_{n\in\mathbb{T}}\mathcal{F}_{]-\infty,n]}^{\tilde{X}}\right]\nonumber \\
 &  & =E^{y}\left[\left.M_{k}^{y}(\tilde{X}_{k-1},x)\right|\bigcap_{n\in\mathbb{T}}\mathcal{F}_{]-\infty,n]}^{\tilde{X}}\right]\nonumber \\
 &  & =\sum_{x^{\prime}}P^{y}\left(\left.\tilde{X}_{k-1}=x^{\prime}\right|\bigcap_{n\in\mathbb{T}}\mathcal{F}_{]-\infty,n]}^{\tilde{X}}\right)M_{k}^{y}(x^{\prime},x),\quad P^{y}-a.s.\label{eq:tail_triv_abs_probs}
\end{eqnarray}
where $E^{y}$ denotes expectation w.r.t. $P^{y}$. So, if $\bigcap_{n\in\mathbb{T}}\mathcal{F}_{]-\infty,n]}^{\tilde{X}}$
is \emph{not} $P^{y}$-a.s. trivial (cf. (\ref{eq:cond_ergodicity})
via Lemmata~\ref{lem:F_X_F_X_tilde} and~\ref{lem:P_bold_P}), regular
conditional probabilities of the form $P^{y}(\tilde{X}_{n}=x|\bigcap_{n\in\mathbb{T}}\mathcal{F}_{]-\infty,n]}^{\tilde{X}})$
give rise to absolute probabilities for $M^{y}$ distinct from $\mu^{y}$,
hence $\text{card}(\Pi_{M^{y}})>1$. The following is a concrete example
of this phenomenon. Throughout the remainder of Section~\ref{sub:counter_examples_deg_obs}
Assumption~\ref{assu:time_homog} is in force.

Let $E=\{0,1,2,3\}$ and
\[
M_{0}(x,x)=1/2,\quad M_{0}(x+1\;\text{mod}\,4,x)=1/2.
\]
$M_{0}$ obviously has a unique invariant distribution and Assumption~\ref{assu:signal_ergo}
holds. Let $Y_{n}=\mathbf{1}_{\{X_{n}\in\{1,3\}\}}$. Then the time-reversed
model $(X^{+},Y^{+})$ coincides up to a relabeling of states with
\citep[Example 1.1]{intrinsic_filter_chapter}, and as explained therein
(\ref{eq:filter_stab_sigma_1}) does not hold, so neither does (\ref{eq:cond_ergodicity}).
It follows by Theorem~\ref{thm:cond_ergo_perf_sampling} that the
coupling is not a.s. successful. We can also verify that for this
model $\text{card}(\Pi_{M^{y}})>1$ for any $y\in\Omega^{Y}$ by direct
calculation in connection with (\ref{eq:tail_triv_abs_probs}), so
that the lack of successful coupling can also be deduced from Proposition~\ref{prop:fixed_y_weak_ergo_hmm}.

Fix any $y=(y_{n})_{n\in\mathbb{T}}\in\Omega^{Y}$. A simple induction
argument provides that for any $n\in\mathbb{T}$ and $x\in E$,
\[
\phi_{n}(x,y_{n:0})=2^{n-1},
\]
and therefore $M_{n}^{y}$ is given by
\begin{equation}
M_{n}^{y}(x,x')=\begin{cases}
2M_{n}(x,x'),\quad & y_{n}=x'\text{ mod}\,2,\\
0, & \text{otherwise}.
\end{cases}\label{eq:M^y_deg}
\end{equation}
From this we observe that for each $x\in E$ either $M_{n}^{y}(x,x)=1$
or $M_{n}^{y}(x,x-1\:\mod\,4)=1$, and in matrix form $M_{n}^{y}$
is as follows, with the case $y_{n}=0$ on the left and the case $y_{n}=1$
on the right:
\[
\left[\begin{array}{cccc}
1 & 0 & 0 & 0\\
1 & 0 & 0 & 0\\
0 & 0 & 1 & 0\\
0 & 0 & 1 & 0
\end{array}\right],\quad\left[\begin{array}{cccc}
0 & 0 & 0 & 1\\
0 & 1 & 0 & 0\\
0 & 1 & 0 & 0\\
0 & 0 & 0 & 1
\end{array}\right].
\]
 Let $A_{n}^{y}:=\{x:y_{n}=x\text{ mod}\,2\}$. By (\ref{eq:M^y_deg}),
$x^{\prime}\notin A_{n}^{y}\Rightarrow M_{n}^{y}(x,x')=0$, so we
observe that any $\pi^{y}\in\Pi_{M^{y}}$, i.e. satisfying the equations
\[
\pi_{n}^{y}(x)=\sum_{x'\in E}\pi_{n-1}^{y}(x^{\prime})M_{n}^{y}(x^{\prime},x),\quad\forall x\in E,n\in\mathbb{T},
\]
must be such that for all $n\in\mathbb{T}$,
\begin{equation}
\pi_{n}^{y}(x)=\begin{cases}
\sum_{x^{\prime}\in A_{n-1}^{y}}\pi_{n-1}^{y}(x^{\prime})M_{n}^{y}(x^{\prime},x),\quad & x\in A_{n}^{y},\\
0, & x\notin A_{n}^{y}.
\end{cases}\label{eq:pi_n_recurse}
\end{equation}
Via some simple manipulations it then follows that if the values $\{\pi_{n}^{y}(x)\}_{x\in A_{n}^{y}}$
are fixed, (\ref{eq:pi_n_recurse}) provides two equations which can
be solved for the two values $\{\pi_{n-1}^{y}(x)\}_{x\in A_{n-1}^{y}}$.
So, if for any $w\in[0,1]$ we set $\pi_{0}^{y}(x):=w\mathbb{I}[x=y_{0}]+(1-w)\mathbb{I}[x=y_{0}+2]$
and then recursively solve (\ref{eq:pi_n_recurse}) for $(\pi_{n}^{y})_{n\in\mathbb{T}\setminus\{0\}}$,
we obtain by construction a sequence $(\pi_{n}^{y})_{n\in\mathbb{T}}\in\Pi_{M^{y}}$
uniquely defined by $w$. Distinct values of $w$ thus giving rise
to distinct members of $\Pi_{M^{y}}$, we therefore have $\text{card}(\Pi_{M^{y}})>1$.
Moreover, when $w=0$ or $w=1$ the measure $P_{\pi^{y}}$ defined
as in (\ref{eq:P_sub_pi}) fixes all its mass on a single point in
$\Omega^{X}$, and $\pi_{n}^{y}$ is a version of $P^{y}(\tilde{X}_{n}\in\cdot\,|\bigcap_{n\in\mathbb{T}}\mathcal{F}_{]-\infty,n]}^{\tilde{X}})$
as in (\ref{eq:tail_triv_abs_probs}).

\subsubsection{Reducible signal\label{sub:counter_examples_red}}

Throughout Section~\ref{sub:counter_examples_red} Assumptions~\ref{assu:non_degeneracy}
and~\ref{assu:time_homog} are in force. The purpose of this example
is to illustrate that condition 2. of Theorem~\ref{thm:cond_ergo_perf_sampling}
is strictly stronger than (\ref{eq:cond_ergodicity}), and so the
coupling may fail to be a.s. successful even when (\ref{eq:cond_ergodicity})
holds. The main point here is that reducibility of $M_{0}$ is not
ruled out by (\ref{eq:cond_ergodicity}), and may compromise weak
ergodicity of the backward products of $M^{y}$. Indeed, Let $E=\{0,1,2,3\}$
and let $M_{0}$ be given by the matrix:
\[
\left[\begin{array}{cccc}
\frac{1}{2} & \frac{1}{2} & 0 & 0\\
\frac{1}{2} & \frac{1}{2} & 0 & 0\\
0 & 0 & \frac{1}{2} & \frac{1}{2}\\
0 & 0 & \frac{1}{2} & \frac{1}{2}
\end{array}\right]
\]
Obviously $\pi_{0}:=\begin{bmatrix}1/2 & 1/2 & 0 & 0\end{bmatrix}$
is an invariant distribution for $M_{0}$ and with this choice of
$\pi_{0}$ Assumption~\ref{assu:signal_ergo} is satisfied, since
under the probability measure $\mathbf{P}$, which is constructed
using $M_{0}$ and $\pi_{0}$, the $(X_{n})_{n\in\mathbb{T}}$ are
then i.i.d. according to $\pi_{0}$.

With $M_{0}$ as given by the above matrix, $\phi_{n}(0,y_{n:0})=\phi_{n}(1,y_{n:0})$
and $\phi_{n}(2,y_{n:0})=\phi_{n}(3,y_{n:0})$ for all $n$ and $y_{n},\ldots,y_{0}$,
and $M_{n}^{y}$ is given by the matrix:
\[
\left[\begin{array}{cccc}
\frac{g_{0}(0,y_{n})}{g_{0}(0,y_{n})+g_{0}(1,y_{n})} & \frac{g_{0}(1,y_{n})}{g_{0}(0,y_{n})+g_{0}(1,y_{n})} & 0 & 0\\
\frac{g_{0}(0,y_{n})}{g_{0}(0,y_{n})+g_{0}(1,y_{n})} & \frac{g_{0}(1,y_{n})}{g_{0}(0,y_{n})+g_{0}(1,y_{n})} & 0 & 0\\
0 & 0 & \frac{g_{0}(2,y_{n})}{g_{0}(2,y_{n})+g_{0}(3,y_{n})} & \frac{g_{0}(3,y_{n})}{g_{0}(2,y_{n})+g_{0}(3,y_{n})}\\
0 & 0 & \frac{g_{0}(2,y_{n})}{g_{0}(2,y_{n})+g_{0}(3,y_{n})} & \frac{g_{0}(3,y_{n})}{g_{0}(2,y_{n})+g_{0}(3,y_{n})}
\end{array}\right]
\]

The equalities in the statement of Lemma~\ref{lem:existence_of_pi^y}
are satisfied if we take
\[
\mu_{n}^{y}(x)=\frac{\pi_{0}(x)g_{0}(x,y_{n})}{\sum_{z\in E}\pi_{0}(z)g_{0}(z,y_{n})}=\frac{g_{0}(x,y_{n})}{g_{0}(0,y_{n})+g_{0}(1,y_{n})}\mathbb{I}[x\in\{0,1\}],
\]
and then $(X_{n})_{n\in\mathbb{T}}$ are independent under $\mathbf{P}^{\mathcal{F}^{Y}}(\omega,\cdot)$
for any $\omega\in\Omega$, so that (\ref{eq:cond_ergodicity}) holds
by the Kolmogorov 0-1 law. However, condition 2. of Theorem~\ref{thm:cond_ergo_perf_sampling}
does not hold: to see this first note that for any $y\in\Omega^{Y}$
, $\pi^{y}=(\pi_{n}^{y})_{n\in\mathbb{T}}$ with $\pi_{n}^{y}(x)\propto\mathbb{I}[x\in\{2,3\}]g_{0}(x,y_{n})$
defines a sequence of absolute probabilities for $M^{y}$, distinct
from $\mu^{y}$. Then, with $A:=\{\omega:X_{n}(\omega)\in\{0,1\}\; i.o.\}\in\bigcap_{n}\mathcal{F}_{]-\infty,n]}^{X}$
,
\[
0=\mathbf{P}_{\pi^{Y(\omega)}}(A)\neq\mathbf{P}^{\mathcal{F}^{Y}}(\omega,A)=1,\quad\forall\omega\in\Omega.
\]
Thus we conclude that condition 1 of Theorem~\ref{thm:cond_ergo_perf_sampling}
does not hold, i.e., the coupling is not a.s. successful. Of course,
this could have been verified more directly using Proposition~\ref{prop:fixed_y_weak_ergo_hmm};
the backward products of $M^{y}$ are clearly not weakly ergodic and
in fact $\mathbf{Q}^{y}(\bigcap_{n\in\mathbb{T}}\{T_{n}=-\infty\})=1$.

\subsection{Verifiable conditions for successful coupling}

Our next aim is to present some sufficient conditions for condition
1. of Theorem~\ref{thm:cond_ergo_perf_sampling} to hold. We shall
make several uses of the following lemma, the proof of which is mostly
technical and is given in the appendix.
\begin{lem}
\label{lem:beta_My_bounds}Suppose that for some $n<0$ there exists
$k\in\{n+1,\ldots,0\}$, a probability distribution $\nu$ and constants
$(\epsilon^{-},\epsilon^{+})\in]0,\infty[$ such that
\begin{equation}
\epsilon^{-}\nu(x^{\prime})\leq M_{n,k}(x,x^{\prime})\leq\epsilon^{+}\nu(x^{\prime}),\quad\forall(x,x^{\prime})\in E^{2}.\label{eq:M_n,k_mixing}
\end{equation}
Then the following hold:

\noindent 1. If $k=n+1$ and $\phi_{n+1}(x,y_{n+1:0})>0$ for all
$x$ and $y_{n+1},\ldots,y_{0}$, then
\begin{equation}
\sup_{y\in\Omega^{Y}}\beta(M_{n,k}^{y})\leq1-\frac{\epsilon^{-}}{\epsilon^{+}}<1.\label{eq:M^y_dobrushin_bound1}
\end{equation}
2. If $k>n+1$,
\begin{equation}
\phi_{k+1}(x,y_{k+1:0})>0,\quad\forall(x,y_{k+1},\ldots,y_{0})\in E\times F^{k}\label{eq:phi_k_positive}
\end{equation}
and $g_{j}(x,y)>0$ for all $x,y$ and $j=n+1,\ldots,k$, then
\begin{equation}
\beta(M_{n,k}^{y})\leq1-\frac{\epsilon^{-}}{\epsilon^{+}}\prod_{j=n+1}^{k}\frac{g_{j}^{-}(y_{j})}{g_{j}^{+}(y_{j})}<1,\quad\forall y=(y_{n})_{n\in\mathbb{T}}\in\Omega^{Y},\label{eq:M^y_dobrushin_bound_2}
\end{equation}
where $g_{j}^{-}(y):=\min_{x}g_{j}(x,y)$, $g_{j}^{+}(y):=\max_{x}g_{j}(x,y)$.
\end{lem}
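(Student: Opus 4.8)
The plan is to reduce both parts to one elementary minorization principle, applied to an explicit Feynman--Kac representation of the backward products $M_{n,k}^y$. Recalling that the recursion (\ref{eq:recurse My}) makes $M_{n,k}^y$ the matrix product $M_{n+1}^y\cdots M_k^y$, I would first multiply out the defining ratios (\ref{eq:M^y_defn}) along a path $x=x_n,x_{n+1},\ldots,x_k=z$ and cancel the intermediate $\phi$ factors to obtain
\[
M_{n,k}^y(x,z)=\frac{\phi_{k+1}(z,y_{k+1:0})\,\tilde{M}_{n,k}(x,z)}{\phi_{n+1}(x,y_{n+1:0})},\qquad \tilde{M}_{n,k}(x,z):=\sum_{x_{n+1},\ldots,x_{k-1}}\;\prod_{j=n+1}^{k}M_j(x_{j-1},x_j)\,g_j(x_j,y_j).
\]
Iterating (\ref{eq:phi_defn}) gives the companion identity $\phi_{n+1}(x,y_{n+1:0})=\sum_w \phi_{k+1}(w,y_{k+1:0})\tilde{M}_{n,k}(x,w)$, so the denominator is exactly the normalizer that renders $M_{n,k}^y$ stochastic. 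The positivity bookkeeping is where care is needed: for part 2, I would check by downward induction that (\ref{eq:phi_k_positive}) together with $g_j>0$ for $j=n+1,\ldots,k$ force $\phi_j>0$ on all of $E$ for every $j\in\{n+1,\ldots,k\}$, so that each factor $M_j^y$ along the path uses the first branch of (\ref{eq:M^y_defn}) and the telescoping is legitimate.

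The second ingredient is the following minorization lemma: if $K(x,z)=\frac{R(x,z)h(z)}{\sum_w R(x,w)h(w)}$ with $h\ge 0$, $\sum_w\nu(w)h(w)>0$, and $a\,\nu(z)\le R(x,z)\le b\,\nu(z)$ for all $x,z$, then $K(x,z)\ge (a/b)\,\lambda(z)$ where $\lambda(z):=\frac{\nu(z)h(z)}{\sum_w\nu(w)h(w)}$ is a probability distribution; hence, by the second expression in (\ref{eq:dobrushin_alt}), $\sum_z\min\{K(x,z),K(x',z)\}\ge a/b$ and so $\beta(K)\le 1-a/b$. For part 1, I would apply this with $R=M_{n+1}$, $h(z)=g_{n+1}(z,y_{n+1})\phi_{n+2}(z,y_{n+2:0})$ and $(a,b)=(\epsilon^-,\epsilon^+)$ read off from (\ref{eq:M_n,k_mixing}) at $k=n+1$; here the single potential is absorbed entirely into $\lambda$, giving the clean uniform bound $\beta(M_{n,n+1}^y)\le 1-\epsilon^-/\epsilon^+$, i.e.\ (\ref{eq:M^y_dobrushin_bound1}). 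The hypothesis $\phi_{n+1}>0$ guarantees both that the first branch of (\ref{eq:M^y_defn}) is in force and, via the majorization, that $\sum_w\nu(w)h(w)\ge \phi_{n+1}(x,y_{n+1:0})/\epsilon^+>0$.

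For part 2 the only genuinely new point is that the intermediate potentials inside $\tilde{M}_{n,k}$ cannot be absorbed into a single measure, so I would bound them crudely: from $g_j^-(y_j)\le g_j(x_j,y_j)\le g_j^+(y_j)$ and the nonnegativity of the $M_j$ factors, the path sum obeys $G^-\,M_{n,k}(x,z)\le \tilde{M}_{n,k}(x,z)\le G^+\,M_{n,k}(x,z)$ with $G^\pm:=\prod_{j=n+1}^{k}g_j^\pm(y_j)$, and combining with (\ref{eq:M_n,k_mixing}) yields $G^-\epsilon^-\,\nu(z)\le \tilde{M}_{n,k}(x,z)\le G^+\epsilon^+\,\nu(z)$. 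Applying the minorization lemma with $R=\tilde{M}_{n,k}$, $h=\phi_{k+1}$ and $(a,b)=(G^-\epsilon^-,G^+\epsilon^+)$ then produces $\beta(M_{n,k}^y)\le 1-\frac{\epsilon^-}{\epsilon^+}\prod_{j=n+1}^{k}\frac{g_j^-(y_j)}{g_j^+(y_j)}$, which is (\ref{eq:M^y_dobrushin_bound_2}); strict positivity of each ratio $g_j^-(y_j)/g_j^+(y_j)$, hence of the complementary bound, is exactly where the non-degeneracy $g_j>0$ enters. I expect the telescoping algebra of the representation and its attendant positivity induction to be the main obstacle, the potential estimates and the final minorization step being routine.
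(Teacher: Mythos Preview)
Your proposal is correct and follows essentially the same route as the paper: write $M_{n,k}^y$ as the telescoped Feynman--Kac ratio, verify positivity of the relevant $\phi_j$ by downward induction, sandwich the intermediate potentials by $g_j^\pm(y_j)$ and $M_{n,k}$ by $\epsilon^\pm\nu$, and read off a uniform minorization that yields the Dobrushin bound via (\ref{eq:dobrushin_alt}). The only cosmetic difference is that you isolate the minorization step as a standalone lemma, whereas the paper carries out the same numerator-and-denominator estimates inline.
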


\subsubsection{Almost surely successful coupling\label{sub:Almost-sure-successful}}

Throughout Section~\ref{sub:Almost-sure-successful} Assumption~\ref{assu:time_homog}
is in force. In the examples of Sections~\ref{sub:counter_examples_deg_obs}
and~\ref{sub:counter_examples_red} it is respectively the issues
of degeneracy of the observations and reducibility of $M_{0}$ which
caused problems for successful coupling. Our next aim is to illustrate
that once these two issues are ruled out, condition 1. of Theorem~\ref{thm:cond_ergo_perf_sampling}
holds.

Suppose that $\pi_{0}$ is the unique invariant distribution of $M_{0}$,
\begin{equation}
\pi_{0}(x)>0,\;\forall x\in E,\quad\text{and}\quad\lim_{n\rightarrow\infty}M_{0}^{(n)}(x,x^{\prime})-\pi_{0}(x^{\prime})=0,\quad\quad\forall(x,x^{\prime})\in E^{2}.\label{eq:M_0_weak_ergo_pi_+ve}
\end{equation}
It follows that there exists a probability distribution $\nu$, $(\epsilon^{-},\epsilon^{+})\in]0,\infty[$
and $m\geq1$ such that
\begin{equation}
\epsilon^{-}\nu(x^{\prime})\leq M_{0}^{(m)}(x,x^{\prime})\leq\epsilon^{+}\nu(x^{\prime}),\quad\forall(x,x^{\prime})\in E^{2}.\label{eq:M_0_m_step}
\end{equation}

We shall now argue that, if we adopt also Assumption~\ref{assu:non_degeneracy},
then for each $k\in\mathbb{T}$,
\begin{equation}
\lim_{n\rightarrow-\infty}\beta(M_{n,k}^{Y})=0,\quad\mathbf{P}-a.s.,\label{eq:beta_as_to_zero}
\end{equation}
which is, via Propositions~\ref{prop:fixed_y_weak_ergo_hmm} and~\ref{prop:weak_ergo_charac},
equivalent to condition 1. of Theorem~\ref{thm:cond_ergo_perf_sampling}.

Using the submultiplicativity of the Dobrushin coefficient and part
2. of Lemma~\ref{lem:beta_My_bounds}, we have for any $y=(y_{n})_{n\in\mathbb{T}}$,
and $n<k\in\mathbb{T}$,
\begin{eqnarray*}
\beta(M_{n,k}^{y}) & \leq & \prod_{i=0}^{\left\lfloor \frac{k-n}{m}\right\rfloor -1}\beta(M_{k-(i+1)m,k-im}^{y})\\
 & \leq & \prod_{i=0}^{\left\lfloor \frac{k-n}{m}\right\rfloor -1}(1-f(y_{k-(i+1)m+1},\ldots,y_{k-im})),
\end{eqnarray*}
where $f:F^{m}\rightarrow]0,1]$ is given b
\[
f(y_{1},\ldots,y_{m}):=\frac{\epsilon^{-}}{\epsilon^{+}}\prod_{j=1}^{m}\frac{g_{0}^{-}(y_{j})}{g_{0}^{+}(y_{j})},\quad(y_{1},\ldots,y_{m})\in F^{m}.
\]
Since for any sequence $(a_{i})_{i\in\mathbb{N}}$ with values in
$]0,1]$, $\prod_{i=0}^{\infty}(1-a_{i})=0\Leftrightarrow\sum_{i=0}^{\infty}a_{i}=\infty$,
in order to establish (\ref{eq:beta_as_to_zero}) it suffices to show

\begin{equation}
\sum_{i=0}^{\infty}f(Y_{k-(i+1)m+1},\ldots,Y_{k-im})=\infty,\quad\mathbf{P}-a.s.\label{eq:sum_f_blow_up}
\end{equation}
To this end let $Z_{i}^{(k)}=(X_{k-(i+1)m+1},\ldots,X_{k-im},Y_{k-(i+1)m+1},\ldots,Y_{k-im})$.
The time reversed bivariate process $(X^{+},Y^{+})$ is a stationary
Markov chain under $\mathbf{P}$, and it follows from (\ref{eq:M_0_weak_ergo_pi_+ve})
and the conditional independence structure of the HMM that the transition
kernel of $(X^{+},Y^{+})$ has a unique invariant distribution $\pi_{0}(dx)g_{0}(x,y)\psi(dy)$,
and is uniformly ergodic, in the sense of \citet[Chapter 16]{meyn2009markov}.
Some simple but tedious calculations show that under $\mathbf{P}$,
$(Z_{i}^{(k)})_{i\in\mathbb{N}}$ is then also a stationary Markov
chain, with transition kernel which admits a unique invariant distribution
and which is uniformly ergodic. So by the strong law of large numbers
for stationary and ergodic Markov chains,
\begin{equation}
\lim_{n\rightarrow\infty}\frac{1}{n}\sum_{i=0}^{n-1}f(Y_{k-(i+1)m+1},\ldots,Y_{k-im})=\mathbb{E}[f(Y_{-m+1},\ldots,Y_{0})],\quad\mathbf{P}-a.s.\label{eq:mc_SLLN}
\end{equation}
Since $f$ is strictly positive the expectation in (\ref{eq:mc_SLLN})
is strictly positive, hence (\ref{eq:sum_f_blow_up}) holds, hence
(\ref{eq:beta_as_to_zero}) holds.

\subsubsection{Surely successful coupling}

In practice one is typically presented with an observation sequence
which is not necessarily distributed according to $\mathbf{P}$. It
may then be of some concern that even if one (and then both) of the
conditions of Theorem~\ref{thm:cond_ergo_perf_sampling} holds, the
coupling may fail to be successful for $y$ in a set of observation
sequences which has zero probability under $\mathbf{P}$. In this
section, we discuss some simple sufficient conditions for the stronger
requirement that
\begin{equation}
\mathbf{Q}^{y}\left(\bigcap_{n\in\mathbb{T}}\left\{ T_{n}>-\infty\right\} \right)=1,\quad\forall y\in\Omega^{Y}.\label{eq:sure_termination}
\end{equation}

Suppose that
\begin{equation}
\inf_{n\in\mathbb{T}}\min_{(x,x^{\prime})\in E^{2}}M_{n}(x,x^{\prime})>0.\label{eq:M_n_unifom_positive}
\end{equation}
and assume that for all $y=(y_{n})_{n\in\mathbb{T}}\in\Omega^{Y}$
\begin{equation}
\forall n\in\mathbb{T},\;\exists x:g_{n}(x,y_{n})>0.\label{eq:g_positive_somewhere}
\end{equation}
An application of part 1. of Lemma~\ref{lem:beta_My_bounds} gives
\[
\sup_{y\in\Omega^{Y}}\sup_{n\in\mathbb{N}}\beta(M_{n}^{y})<1,
\]
so for any $k\in\mathbb{T}$ and $y\in\Omega^{Y}$,
\[
\lim_{n\rightarrow-\infty}\beta(M_{n,k}^{y})\leq\lim_{n\rightarrow-\infty}\prod_{n}^{k}\beta(M_{j}^{y})=0,
\]
which via Proposition~\ref{prop:fixed_y_weak_ergo_hmm} and Proposition~\ref{prop:weak_ergo_charac}
gives (\ref{eq:sure_termination}).

The reader can easily verify that Part 2. of Lemma~\ref{lem:beta_My_bounds}
can be used to show that (\ref{eq:sure_termination}) holds under
conditions weaker than (\ref{eq:M_n_unifom_positive}) and perhaps
at the expense of strengthening (\ref{eq:g_positive_somewhere}).

\subsection{The case of finitely many observations}

In practice, typically only a finite number of observations are available,
say $y_{0},\ldots,y_{m}$ for some $m\in\mathbb{T}$, and one aims
to sample from conditional distributions of the form $\mathbf{P}(X_{n}\in\cdot|\mathcal{F}_{[m,0]}^{Y})$,
for some $n\geq m$. There are a number of ways the coupling method
can be applied in this situation.

As an example, fix $m\in\mathbb{T}$ and suppose for simplicity of
exposition that $M_{n}$ does not depend on $n$, and has unique invariant
distribution $\pi$. Let the probability space of Section \ref{sub:The-coupling-for}
be augmented so as to also support an $E$-valued random variable
$Z_{m}$ such that with $y$ fixed, $\mathbf{Q}^{y}$ makes $Z_{m}$
independent of $(\xi_{n}^{x};x\in E;n\in\mathbb{T})$, and
\[
\mathbf{Q}^{y}(Z_{m}=x)=\frac{\pi(x)g_{m}(x,y_{m})\phi_{m+1}(x,y_{m+1:0})}{\sum_{x^{\prime}\in E}\pi(x^{\prime})g_{m}(x^{\prime},y_{m})\phi_{m+1}(x^{\prime},y_{m+1:0})}=:\bar{\pi}_{m}^{y}(x).
\]
Also introduce a random variable $Z_{0}$ such that on the event $\{T_{0}\geq m\}$,
$Z_{0}:=\Phi_{T_{0},0}(x)$ for an arbitrary $x\in E$; and on the
event $\{T_{0}<m\}$, $Z_{0}:=\Phi_{m,0}(Z_{m})$. Then using the
fact that on the event $\{T_{0}\geq m\}$, $\Phi_{T_{0},0}(x)=\Phi_{m,0}(x^{\prime})$
for all $x^{\prime}$, we have
\begin{eqnarray*}
 &  & \mathbf{Q}^{y}(Z_{0}=z)\\
 &  & =\mathbf{Q}^{y}(\{Z_{0}=z\}\cap\{T_{0}\geq m\})+\mathbf{Q}^{y}(\{Z_{0}=z\}\cap\{T_{0}<m\})\\
 &  & =\mathbf{Q}^{y}(\{\Phi_{T_{0},0}(x)=z\}\cap\{T_{0}\geq m\})+\mathbf{Q}^{y}(\{\Phi_{m,0}(Z_{m})=z\}\cap\{T_{0}<m\})\\
 &  & =\mathbf{Q}^{y}(\{\Phi_{m,0}(Z_{m})=z\}\cap\{T_{0}\geq m\})+\mathbf{Q}^{y}(\{\Phi_{m,0}(Z_{m})=z\}\cap\{T_{0}<m\})\\
 &  & =\mathbf{Q}^{y}(\Phi_{m,0}(Z_{m})=z)=\sum_{x^{\prime}\in E}\bar{\pi}_{m}^{y}(x^{\prime})M_{m,0}^{y}(x^{\prime},z),
\end{eqnarray*}
and it is easily checked that $\sum_{x^{\prime}\in E}\bar{\pi}_{m}^{Y}(x^{\prime})M_{m,0}^{Y}(x^{\prime},z)=\mathbf{P}(X_{0}=z|\mathcal{F}_{[m,0]}^{Y}),$
$\mathbf{P}$-a.s.

Some modifications of Algorithm~\ref{alg:Perfect-sampling-for-hmm}
facilitate the sampling of $Z_{0}$. The ``while'' line is replaced
by:
\[
\text{while }card(\text{image of }\Phi_{n,0})>1\text{ and }n>m
\]
and the ``return'' line is replaced by
\begin{eqnarray*}
 &  & \text{if }card(\text{image of }\Phi_{n,0})=1\\
 &  & \quad\text{return }Z_{0}=\Phi_{n,0}(x),\text{ for any }x\in E,\\
 &  & \text{else}\\
 &  & \quad\text{sample }Z_{m}\text{ from the distribution on \ensuremath{E}with }\\
 &  & \quad prob(Z_{m}=x)\propto\pi(x)g_{m}(x,y_{m})\phi_{m+1}(x,y_{m+1:0})\\
 &  & \text{\quad and return \ensuremath{Z_{0}=\Phi_{m,0}(Z_{m})}.}
\end{eqnarray*}
The resulting procedure may be computationally cheaper than direct
calculation and sampling from $\sum_{x\in E}\bar{\pi}_{m}^{y}(x)M_{m,0}^{y}(x,\cdot)$
if $T_{0}\gg m$.

\subsection{Numerical Examples}

\subsubsection{Sensitivity to model misspecification\label{sub:Sensitivity-to-model}}

The purpose of this example is to numerically investigate an HMM for
which the coupling is almost surely successful, i.e. $\lim_{n\rightarrow-\infty}\beta(M_{n,k}^{Y})=0$,
$\mathbf{P}$-a.s., but for which $\beta(M_{n,k}^{Y})$ converges
to zero very slowly or perhaps even remains bounded away from zero
as $n\to-\infty$ when the HMM is misspecified, in the sense that
$Y$ is not distributed according to $\mathbf{P}$.

Consider $E=\{1,2,3\}$, $F=\mathbb{R}$, $\psi$ Lebesgue measure,
and a time-homogeneous HMM, i.e. Assumption~\ref{assu:time_homog}
holds, with for some $\delta\in(0,1)$, $M_{0}$ written in matrix
form:
\[
M_{0}=\begin{bmatrix}1-\delta & \delta & 0\\
\delta/2 & 1-\delta & \delta/2\\
0 & \delta & 1-\delta
\end{bmatrix},
\]
\[
g_{0}(1,y)=g_{0}(3,y)=\frac{e^{-y^{2}/2}}{\sqrt{2\pi}},\quad g_{0}(2,y)=\frac{e^{-(y-1)^{2}/2}}{\sqrt{2\pi}}.
\]
Clearly Assumption~\ref{assu:non_degeneracy} holds and (\ref{eq:M_0_m_step})
holds with $m=2$. Hence by the arguments of Section~\ref{sub:Almost-sure-successful},
for all $k\in\mathbb{T}$, $\lim_{n\rightarrow-\infty}\beta(M_{n,k}^{Y})=0$,
$\mathbf{P}$-a.s.

Figure~\ref{fig:misspec} illustrates $\beta(M_{n,0}^{y})$ and histograms
of the coupling time $T_{0}$ obtained from $10^{4}$ independent
runs of Algorithm~\ref{alg:Perfect-sampling-for-hmm}, for three
different data sequences $y$. The first, corresponding to the left
column of plots, was drawn from $\mathbf{P}$, with the true sequence
of hidden states also shown. The second, corresponding to the middle
column, is a sample path of the process: $Y_{0}=0$ and $Y_{n}=Y_{n+1}+V_{n}$,
where the $V_{n}$ are i.i.d. $\mbox{\ensuremath{\mathcal{N}}}(0,0.25)$.
The third, corresponding to right column, is a realization of $Y_{n}=0.003n+V_{n}$.
To interpret these plots, note that for $x\in\{1,3\}$, $\lim_{y_{0}\to-\infty}g(2,y_{0})/g(x,y_{0})=0$,
and similarly, if it were true that the observation sequence were
constant $y_{n}=y_{n+1}=\cdots=y_{0}$, elementary manipulations show
that for $x\in\{1,3\}$, $\lim_{y_{0}\to-\infty}M_{n,0}^{y}(x,x)=1$,
hence $\lim_{y_{0}\to-\infty}\beta(M_{n,0}^{y})=1$. The plots in
the second and third columns reflect a similar phenomenon, namely
that long sequences of negative observations may slow down the convergence
to zero of $\beta(M_{n,0}^{y})$ as $n\to-\infty$. In the case of
the third column, it is notable that $\beta(M_{n,0}^{y})$ appears
to be bounded away from zero, indeed the same phenomenon was observed
in much longer runs of the algorithm (numerical results not shown).

\begin{figure}
\hfill{}\includegraphics[bb=110bp 280bp 489bp 565bp,clip,width=1\textwidth]{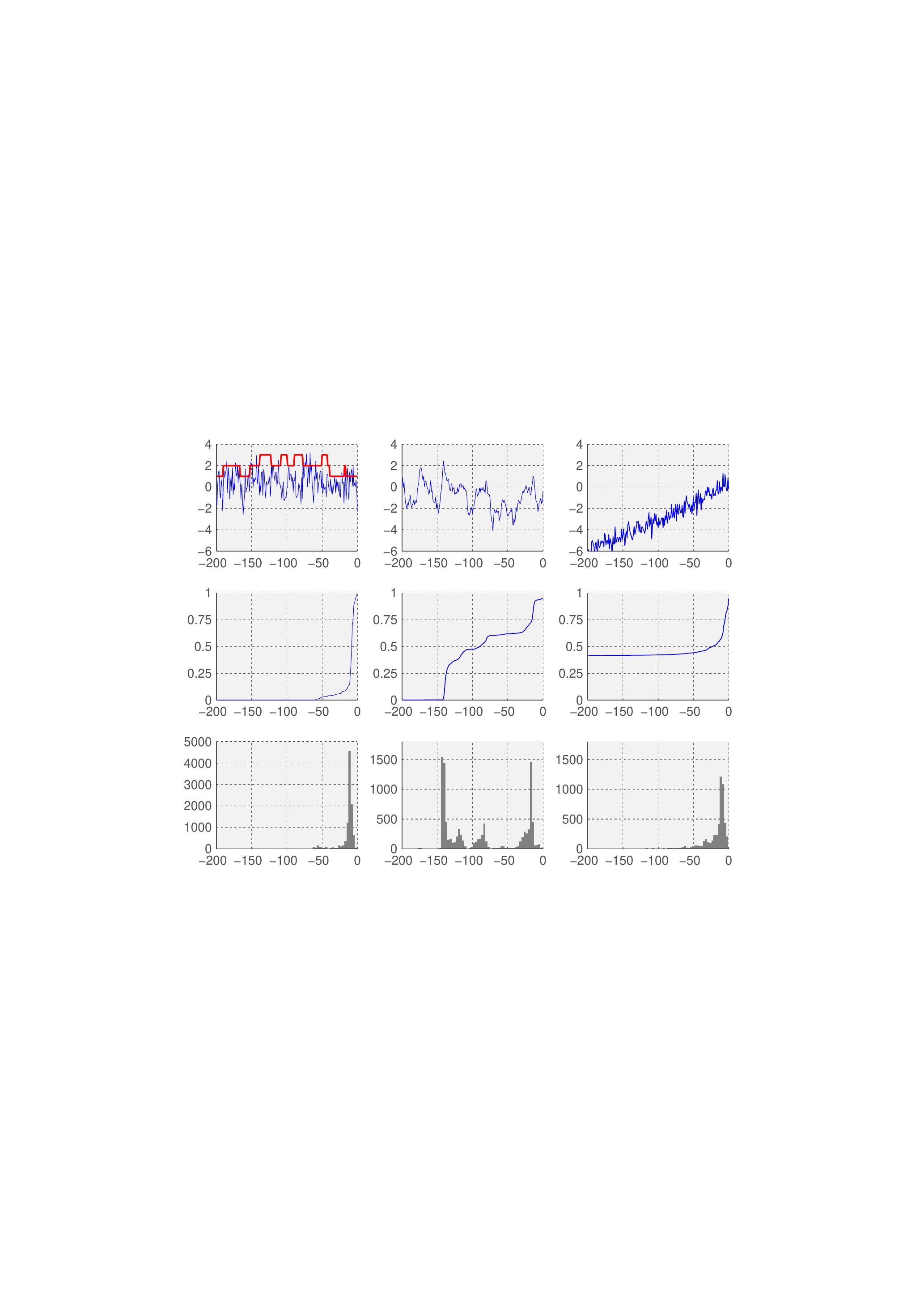}\hfill{}\protect\caption{\label{fig:misspec}Top row: $y_{n}$ vs. $n$ (top left plot also
shows true sequence of hidden states). Middle row: $\beta(M_{n,0}^{y})$
vs. $n$. Bottom row: histograms of $T_{0}$ obtained from $10^{4}$
runs of the algorithm. First column corresponds to data simulated
from the HMM, second and third columns correspond to data from misspecified
models. In the first and second cases, all of the $10^{4}$ realizations
of $T_{0}$ were valued within $\{-200,\ldots,0\}$, for the third
case, only $50.3\%$ of the $10^{4}$ realizations of $T_{0}$ were
valued in $\{-200,\ldots,0\}$, the remaining realizations are not
shown on the bottom-right histogram.}
\end{figure}

\subsubsection{Simulating multiple samples}

Running Algorithm~\ref{alg:Perfect-sampling-for-hmm} several times
in order to obtain multiple i.i.d. samples from $\pi_{0}^{y}$ may
be prohibitively expensive. Consider the following procedure.
\begin{enumerate}
\item Fix $n$, compute $M_{n,0}^{y}$ and obtain an exact sample $X_{n}^{\star}$
from $\pi_{n}^{y}$ using the perfect sampling scheme.
\item Given $X_{n}^{\star}$, drawn $N$ conditionally independent samples,
\[
X_{0}^{(i)}\sim M_{n,0}^{y}(X_{n}^{\star},\cdot),\quad i=1,\ldots,N.
\]

\end{enumerate}
Since $X_{n}^{\star}\sim\pi_{n}^{y}$ and $\sum_{x}\pi_{n}^{y}(x)M_{n,0}^{y}(x,\cdot)=\pi_{0}^{y}(\cdot)$,
the samples $(X_{0}^{(i)};i=1,\ldots,N)$ each have marginal distribution
$\pi_{0}^{y}$, but are not independent in general. Indeed writing
$\lambda_{n}^{y}$ for the joint distribution of $(X_{0}^{(1)},X_{0}^{(2)})$,
\begin{eqnarray*}
 &  & \left\Vert \lambda_{n}^{y}-\pi_{0}^{y}\otimes\pi_{0}^{y}\right\Vert \\
 &  & \quad\quad=\frac{1}{2}\sum_{x,x^{\prime}}\left|\sum_{z}\pi_{n}^{y}(z)M_{n,0}^{y}(z,x)M_{n,0}^{y}(z,x^{\prime})-\pi_{0}^{y}(x)\pi_{0}^{y}(x^{\prime})\right|.
\end{eqnarray*}
Conditional ergodicity dictates this quantity converges to zero as
$n\to-\infty$. Table~\ref{tab:_multiple samples} shows numerical
values against $n$, for the HMM of Section~\ref{sub:Sensitivity-to-model}
with the data sequence shown in the top-left plot of Figure~\ref{fig:misspec}
and with $\pi_{n}^{y}$ and $\pi_{0}^{y}$ approximated by $M_{-1000,n}^{y}(1,\cdot)$
and $M_{-1000,0}^{y}(1,\cdot)$. Also shown is the average percentage
of CPU time spent on step 1. of the above two step procedure, obtained
from $10^{4}$ independent runs. The algorithms were implemented in
Matlab on a 2.80GHz desktop PC. These results illustrate that step
1., which amounts to guaranteeing that the marginal distribution of
each $X_{0}^{(i)}$ is exactly $\pi_{0}^{y}$, is relatively cheap
when $N$ is large, even when $n$ is large enough to make the pairwise
dependence between $X_{0}^{(1)}$ and $X_{0}^{(2)}$ negligible.

\begin{table}[H]
\begin{center}
\begin{tabular}{cc|ccccc}
\toprule
& &    $n=5$ & $n=10$ & $n=25$ & $n=50$  & $n=100$  \\ \midrule
\multicolumn{2}{c|}{$\|\lambda^y_n - \pi^y_0\otimes\pi^y_0\|$ }  & 0.27  & 0.014 & 0.0033 & $3.3 \times10^{-4}$ & $<10^{-6}$\\ \midrule
\% time & $N=10^2$ & 23 (25)    & 26 (29)    & 41 (27)   & 43 (10)   & 47 (22)\\
 step 1.      & $N=10^3$ & 4.2 (4.5)  & 4.8 (5.6)  & 9.2 (5.1) & 9.8 (1.6) & 11 (3.8) \\
       & $N=10^4$ & 0.53 (0.57)& 0.61 (0.72)& 1.2 (0.66)& 1.3 (0.20)& 1.5 (0.49) \\

\bottomrule
\end{tabular}
\end{center}

\protect\caption{\label{tab:_multiple samples}In the second row, $\lambda_{n}^{y}:=\mathrm{Law}(X_{0}^{(1)},X_{0}^{(2)})$,
where $X_{0}^{(1)},X_{0}^{(2)}$ are obtained from step 2. of the
procedure, i.e. $X_{0}^{(i)}\sim M_{n}^{y}(X_{n}^{\star},\cdot)$.
The third to fifth rows show mean percentage of overall CPU time spent
on step 1., with estimated standard deviation in parentheses, obtained
from $10^{4}$ runs.}
\end{table}

\subsection{Outlook}

How much of all this can be generalized beyond the case in which $E$
is a finite set? We believe: quite a lot, although the work involved
is non-trivial. Of course if $E$ is not a finite set, then we have
to let go of Fact~\ref{fact:there-always-exists}; without further
assumption there is no guarantee that even a single sequence of absolute
probabilities for $M$ exists. The coupling we have specified in Section~\ref{sec:Perfect-sampling}
relies heavily on the fact that $E$ contains only finitely many points,
but a generalization via the kind of mechanisms used for backward
coupling of homogeneous chains, see for example \citep{foss1998perfect}
and references therein, may be feasible, and that would be the starting
point from which to investigate generalization of Theorem~\ref{thm:perfect sampling-1}.
Quite a few of the arguments used in the proof of Theorem~\ref{thm:tail}
do not really rely on $E$ being a finite set. Regarding the application
to HMM's, as soon as $E$ contains infinitely many points, then with
a few exceptions such as the linear-Gaussian state-space model, the
functions $\phi_{n}(x,y_{n:0})$ are not available in closed form,
so sampling from the kernels $M_{n}^{y}$ becomes non-trivial and
the perfect simulation algorithm may lose its practical relevance.
Overall though, there are several possible avenues for further investigation.

\section{Appendix}
\begin{proof}
\emph{(of Lemma~\ref{lem:M^Y_is_a_version}.)}First, we claim that
for any nonnegative measurable function $f$, the following holds
$\mathbf{P}$-a.s.,
\begin{eqnarray}
 &  & \mathbf{E}[f(X_{n-1},Y_{n},\ldots,Y_{0})|\sigma(X_{n-1})]\label{eq:phi_equals_cond_exp}\\
 &  & =\int f(X_{n-1},y_{n},\ldots,y_{0})\phi_{n}(X_{n-1},y_{n:0})\psi^{\otimes(|n|+1)}(d(y_{n},\ldots,y_{0})).\nonumber
\end{eqnarray}
The r.h.s. of (\ref{eq:phi_equals_cond_exp}) is clearly measurable
w.r.t. $\sigma(X_{n-1})$, so to prove the claim it remains to check
that for any $x_{n-1}\in E$,
\begin{eqnarray}
 &  & \int_{A(x_{n-1})}\int f(X_{n-1},y_{n},\ldots,y_{0})\phi_{n}(X_{n-1},y_{n:0})\psi^{\otimes(|n|+1)}(d(y_{n},\ldots,y_{0}))d\mathbf{P}\nonumber \\
 &  & \quad\quad=\int_{A(x_{n-1})}f(X_{n-1},Y_{n},\ldots,Y_{0})d\mathbf{P},\label{eq:phi_equals_cond_exp_1}
\end{eqnarray}
where $A(x_{n-1})$ is the event $\{X_{n-1}=x_{n-1}\}$. It follows
from (\ref{eq:P_HMM_fidi}) and by writing out the definition of $\phi_{n}$
in (\ref{eq:phi_defn}) that the l.h.s. of (\ref{eq:phi_equals_cond_exp_1})
is equal to
\begin{eqnarray*}
 &  & \pi_{n-1}(x_{n-1})\int f(x_{n-1},y_{n},\ldots,y_{0})\phi_{n}(x_{n-1},y_{n:0})\psi^{\otimes(|n|+1)}(d(y_{n},\ldots,y_{0}))\\
 &  & =\int\pi_{n-1}(x_{n-1})f(x_{n-1},y_{n},\ldots,y_{0})\sum_{(x_{n},\ldots,x_{0})}\prod_{k=n}^{0}M_{k}(x_{k-1},x_{k})G_{k}(x_{k},dy_{k}),
\end{eqnarray*}
which is also equal to the r.h.s. (\ref{eq:phi_equals_cond_exp_1}),
thus completing the proof of (\ref{eq:phi_equals_cond_exp}).

Next note that $M_{n}^{Y}(X_{n-1},x)$ is measurable w.r.t. $\mathcal{F}_{[n,0]}^{Y}\vee\sigma(X_{n-1})$,
so in order to complete the proof of the Lemma it remains, by a standard
monotone class argument, to show:
\begin{eqnarray}
 &  & \int_{\{X_{n-1}=x_{n-1}\}}\mathbb{I}[(Y_{n},\ldots,Y_{0})\in A]M_{n}^{Y}(X_{n-1},x)d\mathbf{P}\nonumber \\
 &  & =\mathbf{P}(\{X_{n-1}=x_{n-1}\}\cap\{(Y_{n},\ldots,Y_{0})\in A\}\cap\{X_{n}=x\}),\label{eq:M_y_obj_cond_exp}
\end{eqnarray}
for any $x_{n-1},x\in E$ and $A\in\mathcal{B}(F)^{\otimes(|n|+1)}$.
We proceed by fixing $x$ and applying (\ref{eq:phi_equals_cond_exp})
with $f(x_{n-1},y_{n},\ldots,y_{0})=\mathbb{I}[(y_{n},\ldots,y_{0})\in A]M_{n}^{y}(x_{n-1},x)$,
we have using the definitions of $M_{n}^{y}(x_{n-1},x)$, $\phi_{n+1}$
and $\mathbf{P}$ that the following equalities hold $\mathbf{P}$-a.s.,
\begin{eqnarray*}
 &  & \mathbf{E}[\mathbb{I}[(Y_{n},\ldots,Y_{0})\in A]M_{n}^{Y}(X_{n-1},x)|\sigma(X_{n-1})]\\
 &  & =\int_{\{(y_{n},\ldots,y_{0})\in A\}}M_{n}(X_{n-1},x)g_{n}(x,y_{n})\phi_{n+1}(x,y_{n+1:0})\psi^{\otimes(|n|+1)}(d(y_{n},\ldots,y_{0}))\\
 &  & =\mathbf{P}(\{(Y_{n},\ldots,Y_{0})\in A\}\cap\{X_{n}=x\}|\sigma(X_{n-1})),
\end{eqnarray*}
using this identity and the tower property of conditional expectation,
we can re-write the l.h.s.\ of (\ref{eq:M_y_obj_cond_exp}) as
\begin{eqnarray*}
 &  & \int_{\{X_{n-1}=x_{n-1}\}}\mathbb{I}[(Y_{n},\ldots Y_{0})\in A]M_{n}^{Y}(X_{n-1},x)d\mathbf{P}\\
 &  & =\int_{\{X_{n-1}=x_{n-1}\}}\mathbf{E}[\mathbb{I}[(Y_{n},\ldots,Y_{0})\in A]M_{n}^{Y}(X_{n-1},x)|\sigma(X_{n-1})]d\mathbf{P}\\
 &  & =\int_{\{X_{n-1}=x_{n-1}\}}\mathbf{P}(\{(Y_{n},\ldots,Y_{0})\in A\}\cap\{X_{n}=x\}|\sigma(X_{n-1}))d\mathbf{P}\\
 &  & =\mathbf{P}(\{X_{n-1}=x_{n-1}\}\cap\{(Y_{n},\ldots,Y_{0})\in A\}\cap\{X_{n}=x\}).
\end{eqnarray*}
The equality (\ref{eq:M_y_obj_cond_exp}) therefore holds and this
completes the proof of the lemma.\end{proof}
\begin{rem}
We note that the arguments of the above proof rely on the definition
in (\ref{eq:M^y_defn}) only through the values taken by $M_{n}^{y}(x,\cdot)$
on the support of $\phi_{n}$.\end{rem}
\begin{proof}
\emph{(of Lemma~\ref{lem:F_X_F_X_tilde})}. To prove $\mathcal{F}_{I}^{X}=\{A\times\Omega^{Y};A\in\mathcal{F}_{I}^{\tilde{X}}\}$
we need to show that $\mathcal{C}_{I}^{X}:=\{A\times\Omega^{Y};A\in\mathcal{F}_{I}^{\tilde{X}}\}$
is the smallest $\sigma$-algebra of subsets of $\Omega$ w.r.t.\ which
all the $(X_{n})_{n\in I}$ are measurable. We break this down into
three steps: i) show that $\mathcal{C}_{I}^{X}$ is a $\sigma$-algebra;
ii) show that every $(X_{n})_{n\in I}$ is measurable w.r.t. $\mathcal{C}_{I}^{X}$;
iii) show that if any set is removed from $\mathcal{C}_{I}^{X}$ then
the resulting collection of sets either doesn't contain $X_{n}^{-1}(A)$
for some $n\in I$ and $A\in\mathcal{B}(E)$, or is not a $\sigma$-algebra.

Step i) is immediate since $\mathcal{F}_{I}^{\tilde{X}}$ is by definition
a $\sigma$-algebra and $\Omega^{Y}$ is non-empty (because $F$ is
by definition non-empty). For step ii), we have by definition of $\mathcal{F}_{I}^{\tilde{X}}$
that for any $n\in I$ and $A\in\mathcal{B}(E)$, $\tilde{X}_{n}^{-1}(A)\in\mathcal{F}_{I}^{\tilde{X}}$,
and $X_{n}^{-1}(A)=\eta^{-1}\circ\tilde{X}_{n}^{-1}(A)=\tilde{X}_{n}^{-1}(A)\times\Omega^{Y}$,
hence $X_{n}^{-1}(A)\in\mathcal{C}_{I}^{X}$. For step iii), for an
arbitrary $B\in\mathcal{F}_{I}^{\tilde{X}}$ let us remove the set
$B\times\Omega^{Y}$ from $\mathcal{C}_{I}^{X}$, the resulting collection
of sets being $\{A\times\Omega^{Y};A\in\mathcal{F}_{I}^{\tilde{X}}\setminus B\}=:\mathcal{D}_{I}^{X}$.
Since $\mathcal{F}_{I}^{\tilde{X}}$ is the smallest $\sigma$-algebra
w.r.t.\ which all the $(\tilde{X}_{n})_{n\in I}$ are measurable,
either there exists some $n\in I$ and $A\in\mathcal{B}(E)$ such
that $\tilde{X}_{n}^{-1}(A)\notin\mathcal{F}_{I}^{\tilde{X}}\setminus B$,
or $\mathcal{F}_{I}^{\tilde{X}}\setminus B$ is not a $\sigma$-algebra.
In the former case, $X_{n}^{-1}(A)=\eta^{-1}\circ\tilde{X}_{n}^{-1}(A)=\tilde{X}_{n}^{-1}(A)\times\Omega^{Y}\notin\mathcal{D}_{I}^{X}$,
i.e. $X_{n}$ is not measurable w.r.t. $\mathcal{D}_{I}^{X}$. In
the latter case, we claim that $\mathcal{D}_{I}^{X}$ is not a $\sigma$-algebra.
To prove this claim we shall argue to the contrapositive that for
$\tilde{\mathcal{C}}$ any collection of subsets of $E^{\mathbb{T}}$,
if $\mathcal{D}:=\{A\times\Omega^{Y};A\in\mathcal{\tilde{C}}\}$ is
a $\sigma$-algebra, then $\tilde{\mathcal{C}}$ is a $\sigma$-algebra.
To this end, observe: if $\mathcal{D}$ contains $\Omega$, then $\tilde{\mathcal{C}}$
contains $\Omega^{X}$; if $\mathcal{D}$ is closed under complements,
then $A\in\tilde{\mathcal{C}}\Rightarrow A\times\Omega^{Y}\in\mathcal{D}\Rightarrow(A\times\Omega^{Y})^{c}\in\mathcal{D}\Rightarrow A^{c}\times\Omega^{Y}\in\mathcal{D}\Rightarrow A^{c}\in\tilde{\mathcal{C}}$,
i.e. $\tilde{\mathcal{C}}$ is closed under complements; if $\mathcal{D}$
is closed under countable unions, $A_{n}\in\tilde{\mathcal{C}}\Rightarrow A_{n}\times\Omega^{Y}\in\mathcal{D}\Rightarrow\cup_{n\in\mathbb{N}}(A_{n}\times\Omega^{Y})\in\mathcal{D}\Rightarrow(\cup_{n\in\mathbb{N}}A_{n})\times\Omega^{Y}\in\mathcal{D}\Rightarrow\cup_{n}A_{n}\in\tilde{\mathcal{C}}$
, i.e. $\tilde{\mathcal{C}}$ is closed under countable unions. This
completes the proof of $\mathcal{F}_{I}^{X}=\{A\times\Omega^{Y};A\in\mathcal{F}_{I}^{\tilde{X}}\}$,
from which (\ref{eq:tail_x_tail_x_tilde}) follows directly.
\end{proof}

\begin{proof}
\emph{(of Lemma~\ref{lem:P_bold_P})} As a consequence of Lemma~\ref{lem:existence_of_pi^y},
there exists $H\in\mathcal{F}^{Y}$ with $\mathbf{P}(H)=1$ such that
for all $\omega\in H$, $\sum_{z\in E}\mu_{n-1}^{Y(\omega)}(z)M_{n}^{Y(\omega)}(z,x)=\mu_{n}^{Y(\omega)}(x)$
for all $n$ and $x$. Set $\tilde{H}=Y(H)$. For $y\in\tilde{H}$
we are assured by the usual extension argument of the existence of
$P^{y}(\cdot)$ a measure with the desired properties. For $y\notin\tilde{H}$
set $P^{y}(\cdot)$ to an arbitrary probability. We thus obtain the
desired kernel. It follows from Lemma~\ref{lem:F_X_F_X_tilde} that
every set in $\mathcal{F}^{X}$ is of the form $A\times\Omega^{Y}$
for some $A\in\mathcal{F}^{\tilde{X}}$, and then $\mathbf{P}^{\mathcal{F}^{Y}}$
is a probability kernel because $P$ is. In order to establish (\ref{eq:P_f_is_a_version}),
we argue as follows. With $\mathbf{P}\circ Y^{-1}$ the push-forward
of $\mathbf{P}$ by $Y$, define $\tilde{\mathbf{P}}(A):=\int_{\Omega^{X}\times\Omega^{Y}}\mathbb{I}[(x,y)\in A]P^{y}(dx)(\mathbf{P}\circ Y)(dy)$,
which is a probability measure on $(\Omega,\mathcal{F})$ and by construction
$\tilde{\mathbf{P}}(A\times\Omega^{Y}|\mathcal{F}^{Y})(\omega)=\mathbf{P}^{\mathcal{F}^{Y}}(\omega,A\times F)$,
$\tilde{\mathbf{P}}$-a.s., for each $A\in\mathcal{F}^{\tilde{X}}$.
The proof of (\ref{eq:P_f_is_a_version}) will be complete if we can
show that $\tilde{\mathbf{P}}=\mathbf{P}$, since then $\tilde{\mathbf{P}}(\cdot|\mathcal{F}^{Y})=\mathbf{P}(\cdot|\mathcal{F}^{Y})$.
For $\tilde{\mathbf{P}}=\mathbf{P}$ it is sufficient that for each
$n\in\mathbb{T}$ and $A\in\mathcal{F}_{n}$, $\tilde{\mathbf{P}}(A)=\mathbf{P}(A)$,
and the latter holds since, using (\ref{eq:hmm_cond_ind_1}), (\ref{eq:hmm_cond_ind_2}),
Lemmata~\ref{lem:M^Y_is_a_version} and~\ref{lem:existence_of_pi^y},
\begin{eqnarray*}
 &  & \tilde{\mathbf{P}}(A)\\
 &  & =\int\sum_{(x_{n},\ldots,x_{0})\in E^{n+1}}\mathbb{I}[(x,y)\in A]\mu_{n}^{y}(x_{n})\prod_{k=n+1}^{0}M_{k}^{y}(x_{k-1},x_{k})(\mathbf{P}\circ Y)(dy)\\
 &  & =\mathbf{E}[\mathbf{E}[\mathbf{E}[\cdots\mathbf{E}[\mathbb{I}[(X,Y)\in A]|\mathcal{F}^{Y}\vee\mathcal{F}_{]-\infty,0]}^{X}]\cdots|\mathcal{F}^{Y}\vee\mathcal{F}_{]-\infty,n]}^{X}]|\mathcal{F}^{Y}]]\\
 &  & =\mathbf{P}(A).
\end{eqnarray*}
The proof of the lemma is complete.
\end{proof}

\begin{proof}
\emph{(of Lemma~\ref{lem:beta_My_bounds})}. Throughout the proof
fix $y\in\Omega^{Y}$. For 1., first note that by (\ref{eq:phi_defn})
and (\ref{eq:M_n,k_mixing}),
\begin{equation}
\epsilon^{-}\nu(g\phi)_{n+2}^{y}\leq\phi_{n+1}(x,y_{n+1:0})\leq\epsilon^{+}\nu(g\phi)_{n+2}^{y},\label{eq:phi_upp_low}
\end{equation}
where
\[
\nu(g\phi)_{n+2}^{y}:=\sum_{z}\nu(z)g_{n+1}(z,y_{n+1})\phi_{n+2}(z,y_{n+2:0})>0,
\]
the positivity being due to the hypotheses of 1. combined with (\ref{eq:phi_upp_low})
and $\epsilon^{+}>0$. It follows from (\ref{eq:recurse My}), (\ref{eq:M^y_defn}),
the hypothesis of 1., (\ref{eq:M_n,k_mixing}) and (\ref{eq:phi_upp_low})
that
\begin{eqnarray*}
 & M_{n,k}^{y}(x,x^{\prime}) & =M_{n+1}^{y}(x,x^{\prime})\\
 &  & =\dfrac{M_{n+1}(x,x^{\prime})g_{n+1}(x^{\prime},y_{n+1})\phi_{n+2}(x^{\prime},y_{n+2:0})}{\phi_{n+1}(x,y_{n+1:0})}\\
 &  & \geq\frac{\epsilon^{-}}{\epsilon_{+}}\frac{\nu(x^{\prime})g_{n+1}(x^{\prime},y_{n+1})\phi_{n+2}(x^{\prime},y_{n+2:0})}{\nu(g\phi)_{n+2}^{y}},
\end{eqnarray*}
thus there exists a probability distribution $\tilde{\nu}_{n+1}^{y}$
such that $M_{n+1}^{y}(x,x^{\prime})\geq\frac{\epsilon^{-}}{\epsilon_{+}}\tilde{\nu}_{n+1}^{y}(x^{\prime})$.
Combining this fact with the expression for $\beta(\cdot)$ in (\ref{eq:dobrushin_alt})
gives (\ref{eq:M^y_dobrushin_bound1}).

A simple induction shows that when (\ref{eq:phi_k_positive}) and
the hypotheses of 2. hold, $\phi_{j}(x,y_{j:0})>0$ for all $x$ and
$j=k+1,k,\ldots,n+1$. Combining this fact with (\ref{eq:recurse My}),
(\ref{eq:M^y_defn}), (\ref{eq:phi_defn}) and (\ref{eq:M_n,k_mixing}),
\begin{eqnarray*}
 &  & M_{n,k}^{y}(x_{n},x_{k})\\
 &  & =\frac{\sum_{(x_{n+1},\ldots,x_{k-1})}\left(\prod_{j=n+1}^{k}M_{j}(x_{j-1},x_{j})g_{j}(x_{j},y_{j})\right)\phi_{k+1}(x_{k},y_{k+1:0})}{\phi_{n+1}(x_{n},y_{n+1:0})}\\
 &  & \geq\frac{M_{n,k}(x_{n},x_{k})\phi_{k+1}(x_{k},y_{k+1:0})}{\sum_{z}M_{n,k}(x_{n},z)\phi_{k+1}(z,y_{k+1:0})}\prod_{j=n+1}^{k}\frac{g_{j}^{-}(y_{j})}{g_{j}^{+}(y_{j})}\\
 &  & \geq\frac{\epsilon^{-}}{\epsilon^{+}}\frac{\nu(x_{k})\phi_{k+1}(x_{k},y_{k+1:0})}{\sum_{z}\nu(z)\phi_{k+1}(z,y_{k+1:0})}\prod_{j=n+1}^{k}\frac{g_{j}^{-}(y_{j})}{g_{j}^{+}(y_{j})},
\end{eqnarray*}
and the final denominator is strictly positive due to (\ref{eq:phi_k_positive}).
Using again (\ref{eq:dobrushin_alt}) gives (\ref{eq:M^y_dobrushin_bound_2}).
\end{proof}
\bibliographystyle{plainnat}
\bibliography{HMM}

\end{document}